\newcommand*{\Zplus}{\mathbb{Z}_{+}}
\newcommand*{\Pol}{\mathbb{P}}
\newcommand*{\charfunc}{\mathds{1}}
\newtheorem{theorem}{Theorem}
\newtheorem{corollary}[theorem]{Corollary}
\newtheorem{definition}[theorem]{Definition}
\newtheorem{lemma}[theorem]{Lemma}
\newlength{\totalhmargin}
\newlength{\totalvmargin}
\DeclareMathOperator{\sign}{sgn}
\begin{document}

\title{Spectral Galerkin boundary element methods for high-frequency sound-hard scattering problems}

\author{
Akash Anand\thanks{Department of Mathematics and Statistics, Indian Institute of Technology Kanpur, 327 Faculty Building, Kanpur, UP 208016, India. akasha@iitk.ac.in},
Yassine Boubendir\thanks{New Jersey Institute of Technology, Department of Mathematical Sciences, University Heights, Newark NJ 07102, USA. boubendi@njit.edu}, 
Fatih Ecevit\thanks{Bo\u{g}azi\c{c}i University, Department of Mathematics, Bebek TR 34342, Istanbul, Turkey. fatih.ecevit@boun.edu.tr} \thanks{Corresponding author},
and
Souaad Lazergui\thanks{New Jersey Institute of Technology, Department of Mathematical Sciences, University Heights, Newark NJ 07102, USA. sl584@njit.edu}
}

\date{}
\maketitle

\begin{abstract}
This paper is concerned with the design of two different classes of Galerkin boundary element methods for the solution of high-frequency
sound-hard scattering problems in the exterior of two-dimensional smooth convex scatterers. Both methods require a small increase in the
order of $k^{\epsilon}$ (for any $\epsilon >0$) in the number of degrees of freedom to guarantee frequency independent precisions with
increasing wavenumber $k$. In addition, the accuracy of the numerical solutions are independent of frequency provided sufficiently many 
terms in the asymptotic expansion are incorporated into the integral equation formulation. Numerical results validate our theoretical findings.

\end{abstract}

\section{Introduction} \label{sec:1}

Wave propagation simulations at high frequencies require appropriate design of numerical methods. Generally speaking,
classical approaches based on finite elements~\cite{HesthavenWarburton04,DaviesEtAl09,Boffi10},
integral equations~\cite{BrunoKunyansky01,AminiProfit03,BanjaiHackbusch05,TongChew10,BrunoEtAl13},
and finite differences~\cite{NamburuEtAl04,ShererVisbal05,TafloveHagness05} demand discretization in the order of the
wavelength which produce very large linear systems for high wavenumbers. Therefore they are not suitable in the high
frequency regime because of the required computational cost. This is the main reason why several research projects geared
towards the design and analysis of high-frequency simulation strategies were initiated. For instance, in the case of sound-soft scattering
problems, several methods were introduced in the context of single and multiple scattering configurations
\cite{BrunoEtAl04,BrunoEtAl05,BrunoGeuzaine07,HuybrechsVandewalle07,GiladiKeller04,DominguezEtAl07,Ecevit18,EcevitOzen17,EcevitEruslu19}.
Most of these methods were materialized
thanks to the high-frequency asymptotic expansion (ansatz) of the normal derivative of the total field, derived by Melrose
and Taylor in the well known paper \cite{MelroseTaylor85}, for the Dirichlet boundary value problem. Using the asymptotic
expansion of the total field corresponding to the Neumann problem, given in the same paper \cite{MelroseTaylor85}, here
we propose new high-frequency Galerkin boundary methods for sound-hard scattering problems for smooth convex obstacles.

For the Dirichlet boundary value problem, most of the aforementioned techniques use the Melrose-Taylor ansatz. These include the
localized integration based Nystr\"{o}m scheme proposed for single \cite{BrunoEtAl04,BrunoGeuzaine07} and multiple scattering
problems \cite{BrunoEtAl05} (for the derivation of multiple scattering ansatz see \cite{EcevitReitich09,AnandEtAl10}), collocation
technique depending on the numerical steepest descent method \cite{HuybrechsVandewalle07}, and the Galerkin boundary element
methods \cite{DominguezEtAl07,EcevitOzen17,EcevitEruslu19}. The algorithms developed in
\cite{BrunoEtAl04,BrunoGeuzaine07,BrunoEtAl05,HuybrechsVandewalle07} are not supported with convergence analyses, and those
in \cite{BrunoEtAl04,BrunoGeuzaine07,BrunoEtAl05,HuybrechsVandewalle07,DominguezEtAl07} approximate the solution by zero in
the deep shadow region which, as is well known, is true only in the high-frequency limit. In the case of the Dirichlet problem, this
approximation does not effectively impair the accuracy of the numerical solution since the solution in that region rapidly decays
with increasing wavenumber. However, the solution related to the Neumann case decays comparatively slower than the one for the
Dirichlet problem (see Figures~\ref{fig:Dirichlet-NeumannN} and \ref{fig:Dirichlet-NeumannD}), and therefore they may also loose
accuracy for moderate frequencies. The Galerkin boundary element methods proposed in \cite{EcevitOzen17,EcevitEruslu19}
address this problem. For both algorithms, an increase of $\mathcal{O}(k^{\epsilon})$ (for any $\epsilon>0$) in the number of
degrees of freedom is sufficient to fix the approximation error with increasing wavenumber $k$.

In the last decades substantial interest has grown towards high-frequency problems in the fields of mathematical and numerical analysis
\cite{HuybrechsVandewalle07,DominguezEtAl07,EcevitOzen17,EcevitEruslu19,Chandler-WildeGraham09,Chandler-WildeEtAl12,Chandler-WildeEtAl15,ChandlerWildeLangdon06,Chandler-WildeEtAl07,GibbsEtAl19,GrothEtAl18,GrothEtAl13,Hewett15,HewettEtAl15,HewettEtAl13,LangdonEtAl10}.
Indeed, as emphasized above, the design of numerical methods for these problems is based on the use of the ansatz
directly in the numerical scheme. This ansatz was derived analytically using mathematical tools such as pseudo-differential operators,
asymptotic analysis and many others. This gives rise to challenging difficulties mainly related to the development of stable and
convergent numerical algorithms. The aim of this paper is the design and analysis of new Galerkin boundary element methods
for the solution of high-frequency sound-hard scattering problems. From an analytical point of view, this requires a careful analysis
of the Melrose-Taylor ansatz for the Neumann boundary value problem which is more complicated than its Dirichlet counterpart.
The derivation of the ansatz related to the Dirichlet case is provided in complete detail in \cite{MelroseTaylor85} which is not the
case for the Neumann problem. Therefore, for the sake of our analysis, here in this paper we complete the missing parts in this
derivation, and also explain how the ansatz extends over the entire boundary of the scatterers.

The design and numerical analysis of the Galerkin boundary element methods proposed in this paper are based on a careful analysis
of the asymptotic properties of the ansatz
\begin{equation} \label{eq:ansatzintro}
	\eta^{\rm slow} \sim \sum_{p,q,r,\ell} a_{p,q,r,\ell}
\end{equation}
where $\eta^{\rm slow}$ is the unknown of the integral equation formulations as will be explained in \S\ref{sec:2}. We first
determine the H\"{o}rmander classes and obtain wavenumber explicit estimates on the derivatives of the terms $a_{p,q,r,\ell}$. Then, with the
aid of this analysis, we derive sharp wavenumber explicit estimates on the derivatives of the envelope $\eta^{\rm slow}$. Finally, we
use these estimates in the optimal design and numerical analysis of two different classes Galerkin boundary element methods. As we will show,
these methods are capable of delivering prescribed accuracies with the utilization of numbers of degrees of freedom that need to increase
in the order of $k^\epsilon$ (for any $\epsilon > 0$) with increasing wavenumber, and are therefore \emph{almost} frequency independent.

Both numerical methods developed here are frequency independent when an adequate number of terms $a_{p,q,r,\ell}$ are used in the integral formulations.
In this connection, let us mention that the error estimates for the Galerkin boundary element methods developed for the Dirichlet
\cite{DominguezEtAl07,EcevitOzen17,EcevitEruslu19} and the Neumann problem in this paper use C\'{e}a's lemma \cite{Cea64} which leads
to an important factor expressed as a function of the wavenumber. 
In the Dirichlet case, this factor was shown to be $\mathcal{O}(k^{\frac{1}{3}})$ as $k \to \infty$ for the combined field
and star combined integral equations \cite{Galkowski19,GalkowskiEtAl19} (see also \cite{Ecevit18,EcevitEruslu19}), and therefore the Galerkin
approximations are bound to degrade with increasing frequency; this problem was addressed in \cite{Ecevit18} where, in the context of the
combined field and star combined integral equations, it is shown that incorporation of the leading order term in the Dirichlet ansatz
is sufficient to render the methods \cite{EcevitOzen17,EcevitEruslu19} frequency independent. As we will see, the same approach applied to the Neumann
problem will lead to similar observations and results.

To the best of our knowledge, there is no known coercive direct formulation for the Neumann boundary value problem.
However the recent development of a coercive formulation in its indirect form \cite{BoubendirTurc13} suggests that this kind of formulations can also be derived.  
Assuming the existence of such formulations, we rigorously determine the minimal number of terms in the Neumann ansatz \eqref{eq:ansatzintro} that must be incorporated
into the integral equation in order to cancel the effect of the aforementioned factor when C\'{e}a's lemma is used. In this connection we show that,
for any given $\beta \ge 0$, there exists and optimal
collection $\mathcal{F}_{\beta}$ of $(p,q,r,\ell)$ so that the derivatives of the difference
\begin{equation} \label{eq:slowintro}
	\rho^{\rm slow}_{\beta} = \eta^{\rm slow} - \sigma^{\rm slow}_{\beta} = \eta^{\rm slow} - \sum_{(p,q,r,\ell) \in \mathcal{F}_\beta} a_{p,q,r,\ell}
\end{equation}
grow the slowest with increasing $k$. Following this, we use these estimates to show that the incorporation of $\sigma^{\rm slow}_{\beta}$
into any given appropriate integral equation formulation leads to the factor $k^{-\beta/3}$ in the error estimates.

Although the numerical methods we develop in this paper, namely the \emph{frequency-adapted $\beta$-asymptotic Galerkin
boundary element method} and the \emph{$\beta$-asymptotic Galerkin boundary element method based on frequency dependent changes
of variables}, use the same constructions as in their sound-soft versions proposed in \cite{EcevitOzen17} and \cite{EcevitEruslu19} respectively
(see also \cite{Ecevit18}), the analyses are significantly different. The former method resolves the boundary layers
around the shadow boundaries by adequate utilization of subregions in these regions with respect to the frequency. On each subregion, the method uses algebraic polynomials weighted by the oscillations in the incident field of radiation
as in the Dirichlet case \cite{EcevitOzen17}. Similarly, as in \cite{EcevitEruslu19}, the latter method utilizes frequency dependent
changes of variables to resolve the boundary layers around the shadow boundaries. In addition, we show that, for $\beta = 0$
(which means no term $a_{p,q,r,\ell}$ in the ansatz is incorporated into the integral equation formulation) both methods
require only a small increase of size $\mathcal{O}(k^{\epsilon})$ (for any $\epsilon > 0$) in the number of degrees of freedom
to maintain accuracy with increasing $k$. Moreover, as mentioned above, we demonstrate that the methods are frequency independent when sufficiently many terms
in the ansatz are appropriately used in the integral equation formulations.

The paper is organized as follows. In \S\ref{sec:2}, we introduce the sound-hard scattering problem, and discuss the similarities and
differences between the Neumann and Dirichlet high-frequency solutions. In \S\ref{sec:3}, we determine the H\"{o}rmander classes of the terms $a_{p,q,r,\ell}$
in the ansatz \eqref{eq:ansatzintro} and the envelopes $\rho^{\rm slow}_{\beta}$ in \eqref{eq:slowintro}, and derive sharp wavenumber dependent
estimates on their derivatives. We use these estimates in the construction and numerical analysis of the
\emph{frequency-adapted $\beta$-asymptotic Galerkin boundary element method} and the
\emph{$\beta$-asymptotic Galerkin boundary element method based on frequency dependent changes of variables} in \S\ref{sec:4}.
We present numerical results confirming our theoretical findings in \S\ref{sec:5}.
Finally, the derivation of the ansatz is presented in Appendix~\ref{sec:asymptotics} where we also show how it extends to the entire boundary of the scatterers.

\section{Problem statement} \label{sec:2}

We consider the sound-hard scattering problem in the exterior of a smooth, compact and strictly convex obstacle $K \subset \mathbb{R}^2$ illuminated
by a plane wave incidence $u^{\rm inc}(x)=e^{ik\alpha \cdot x}$ with direction $\alpha$, $|\alpha| =1$ and $k >0$. The unknown
scattered field $u$ satisfies \cite{ColtonKress92,Chandler-WildeEtAl12}
\begin{equation}
\label{eq:Helmholtz}
	\left\{
		\begin{array}{l}
			(\Delta +k^2) u = 0
			\quad \text{in } \mathbb{R}^2 \backslash K,
			\\
			\partial_{\nu} u = -\partial_{\nu} u^{\rm inc}
			\quad \text{on } \partial K,
			\\
			\lim_{r \to \infty} \sqrt{r} \big( \frac{\partial u}{\partial r} -iku \big) =0,
			\quad r=|x|,
		\end{array}
	\right.
\end{equation}
where $\nu$ is the exterior unit normal to $\partial K$. The direct approach in high-frequency integral equation
formulations transforms the scattering problem \eqref{eq:Helmholtz} into the computation of the (unknown)
total field, denoted here by $\eta$, on $\partial K$.
In this case, the scattered field can be expressed as the double layer potential
\cite{ColtonKress92,Chandler-WildeEtAl12}
\begin{equation} \label{eq:Dir-trace}
	u(\alpha,x,k) = \int_{\partial K} \dfrac{\partial G_k(x,y)}{\partial \nu(y)} \, \eta(\alpha,y,k) \, ds(y)
\end{equation}
where
\[
	G_k(x,y) = \dfrac{i}{4} \, H^{(1)}_0(k|x-y|)
\]
is the outgoing Green's function for the Helmholtz equation and $H^{(1)}_0$ is the Hankel function of type
one and order zero. One of the primary motivations for using direct formulations is the observation
that $\eta$ is amenable to phase extraction 
\begin{equation} \label{eq:phaseextract}
	\eta(\alpha,x,k) = e^{ik \alpha \cdot x} \, \eta^{\rm slow}(\alpha,x,k)
\end{equation} 
as it is the case for the Dirichlet boundary value problem where the unknown represents the normal derivative of the total field
(see Figures~\ref{fig:Dirichlet-NeumannN} and \ref{fig:Dirichlet-NeumannD}). In this paper, we develop efficient Galerkin boundary element methods
by using the asymptotic behavior (as $k \to \infty$) of the envelope $\eta^{\rm slow}$ in the construction of Galerkin approximation spaces.
This approach is similar to the one used for the sound-soft scattering problem. In Figures~\ref{fig:Dirichlet-NeumannN} and \ref{fig:Dirichlet-NeumannD},
we display the total field and the normal derivative of the total field respectively for the Neumann and Dirichlet boundary value problems.
As we can see, these densities have similar asymptotic characteristics. Specifically, they both posses boundary layers around the shadow boundaries,
and decay rapidly in the deep shadow region with increasing wavenumber. From a numerical perspective, however, approximating the density related to
the Neumann problem is more challenging since its slow part ($\eta^{\rm slow}$) oscillates more strongly around the shadow boundaries
and decays indubitably slower in the shadow region. Consequently, obtaining highly accurate numerical approximations to the Neumann problem
is significantly more challenging when compared to the Dirichlet case.

\begin{figure}[H]
	\centering
	\subfigure{
	\includegraphics*[width=5.8in,viewport=85 6 820 415,clip]{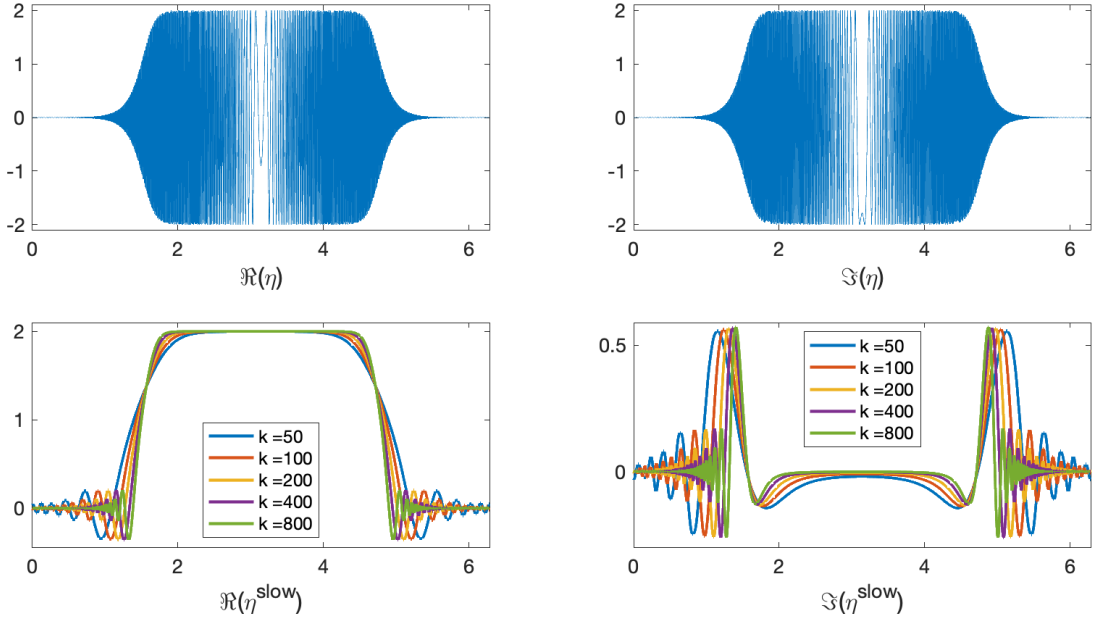}}		
	\caption{Plots of the real and imaginary parts of the total field (top row) and the slow envelope (bottom row)
	for the sound-hard scattering problem in the case of a plane wave incidence with direction $\alpha = (1,0)$ impinging on the unit circle
	$(\cos t, \sin t)$ for $k=50,100,200,400,800$.}
	\label{fig:Dirichlet-NeumannN}
\end{figure}

\begin{figure}[H]
	\centering
	\subfigure{
	\includegraphics*[width=5.8in,viewport=85 6 820 415,clip]{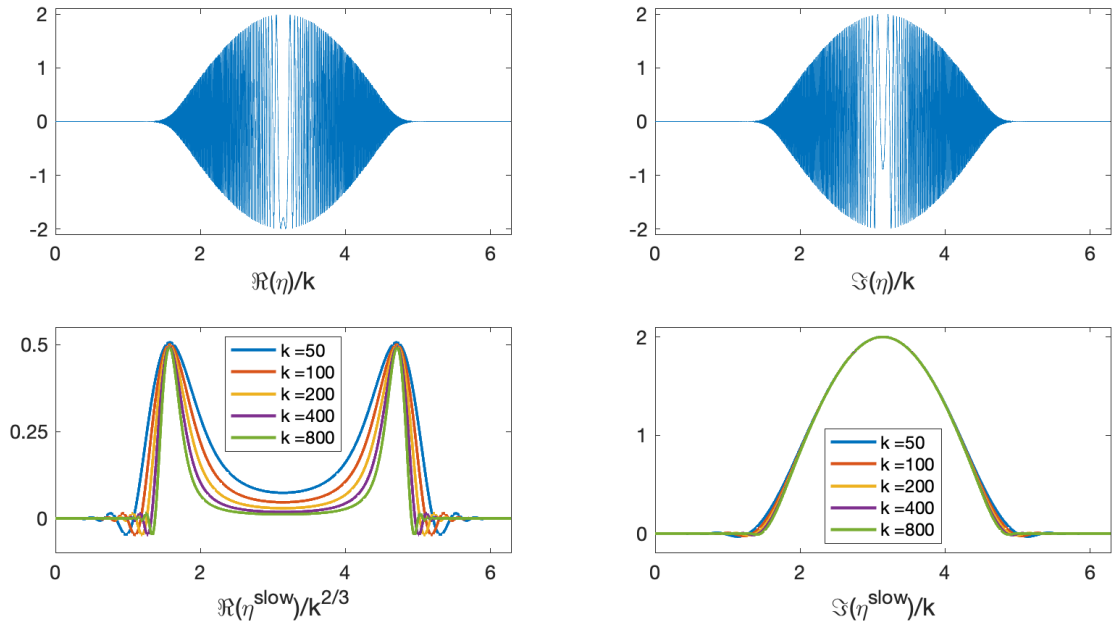}}
	\caption{Plots of the real and imaginary parts of the normal derivative of the total field modulated by $k$ (top row)
	and the slow envelope modulated by $k^{2/3}$ and $k$ (bottom row) for the sound-soft
	scattering problem in the case of a plane wave incidence with direction $\alpha = (1,0) $ impinging on the unit circle
	$(\cos t, \sin t)$ for $k=50,100,200,400,800$.}
	\label{fig:Dirichlet-NeumannD}
\end{figure}

\section{H\"{o}rmander classes and wavenumber explicit derivative estimates} \label{sec:3}

This section is dedicated to the study of the asymptotic expansion of the envelope $\eta^{\rm slow}$ defined in \eqref{eq:phaseextract}.
We first observe that the incident plane wave $u^{\rm inc}(x) = e^{ik \alpha \cdot x}$ determines the illuminated and shadow regions,
and the shadow boundaries on $\partial K$ as
\begin{align*}
	\partial K^{IL} & = \{ x \in \partial K: \alpha \cdot \nu(x) < 0 \} \\
	\partial K^{SR} & = \{ x \in \partial K: \alpha \cdot \nu(x) > 0 \} \\
	\partial K^{SB} & = \{ x \in \partial K: \alpha \cdot \nu(x) = 0 \}.
\end{align*}
We let $2P = |\partial K|$, and we choose $\gamma$ as the $2P$-periodic arc length parameterization of the boundary
$\partial K$ in the counterclockwise direction such that the shadow boundaries $\partial K^{SB} = \gamma \left( \left\{ t_1,t_2 \right\} \right)$
are determined by the parameters $0 < t_{1} < t_{2} < 2P$ satisfying $t_1+t_2 = 2P$, and the illuminated and shadow regions are given by 
$ \partial K^{IL} = \gamma \left( \left( t_1,t_2 \right) \right)$ and $ = \partial K^{SR} = \gamma \left( \left( 0,t_1 \right)
\cup \left( t_2,2P \right) \right)$. In what follows we shall write $\partial K^{SB}$ for $\left\{ t_1,t_2 \right\}$,
$\partial K^{IL}$ for $(t_1,t_2)$, and $\partial K^{SR}$ for $( 0,t_1) \cup (t_2,2P)$.
For convenience, we shall also write $\eta(s,k)$, $\eta^{\rm slow}(s,k)$, $\nu(s)$, etc. rather than $\eta(\alpha,\gamma(s),k)$, $\eta^{\rm slow}(\alpha,\gamma(s),k)$,
$\nu(\gamma(s))$, etc. where $\alpha$ is eliminated and $\gamma(s)$ is replaced by $s$. In the next theorem, we present the asymptotic behavior of 
$\eta^{\rm slow}$ in a two-dimensional setting while a general version is given in Theorem~\ref{thm:asymp-exp-global} of
Appendix~\ref{sec:asymptotics}.

\begin{theorem} \label{thm:asymp-exp}
The envelope $\eta^{\rm slow}$ belongs to the H\"{o}rmander class $S^{0}_{\frac{2}{3},\frac{1}{3}} ([0,2P]\times(0,\infty))$ and admits the
asymptotic expansion
\begin{equation}
\label{eq:MT85Neumann}
	\eta^{\rm slow}(s,k)
	\sim
	\sum_{p,q,r \in \mathbb{Z}_+ \atop \ell \in -\mathbb{N}}
	a_{p,q,r,\ell} (s,k)
\end{equation}
with
\[
	a_{p,q,r,\ell} (s,k)
	= k^{-\frac{1+2p+3q+r+\ell}{3}+(\ell+1)_{-}} \,
	b_{p,q,r,\ell} (s) \,
	(\Psi^{r,\ell})^{(p)}(k^{\frac{1}{3}}Z(s))
\]
where $\mathbb{Z}_+$ is the set of non-negative integers, $t_{-} = \min \{ t, 0 \}$, $b_{p,q,r,\ell}$ are $2P$-periodic complex-valued $C^{\infty}$ functions, $Z$ is a
$2P$-periodic real-valued $C^{\infty}$ function that is positive on the illuminated region $\partial K^{\rm IL}$,
negative on the shadow region $\partial K^{\rm SR}$, and vanishes precisely to first order at the shadow
boundary $\partial K^{\rm SR}$. Finally $\Psi^{r,\ell}$ are complex-valued $C^{\infty}$ functions which admit
the asymptotic expansions 
\[
	\Psi^{r,\ell}(\tau) \sim \sum_{j \in \mathbb{Z}_+} a_{r,\ell,j} \tau^{1+\ell-2r-3j}
	\qquad
	\text{as } \tau \to +\infty
\]
and rapidly decrease in the sense of Schwarz as $\tau \to -\infty$.
\end{theorem}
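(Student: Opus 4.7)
The plan is to obtain Theorem~\ref{thm:asymp-exp} by specializing to two dimensions the general Melrose--Taylor style expansion established in Theorem~\ref{thm:asymp-exp-global} of Appendix~\ref{sec:asymptotics}, and then verifying the H\"{o}rmander-class membership directly from the explicit structure of the terms $a_{p,q,r,\ell}$. I first outline the construction of the ansatz near the shadow boundaries, then the phase extraction and global extension, and finally the derivative bounds.

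First I would build the expansion locally near each shadow-boundary point $\gamma(t_i)$ via boundary-layer analysis. One introduces a phase $\phi$ solving the eikonal equation for the grazing rays and a transverse stretched coordinate $\tau = k^{1/3} Z(s)$, where $Z$ vanishes to first order at $t_1, t_2$, is positive on $\partial K^{IL}$ and negative on $\partial K^{SR}$. Substituting an ansatz
\[
	u \sim e^{ik\phi(s)} \sum_{p,q,r,\ell} k^{-\sigma_{p,q,r,\ell}} \, b_{p,q,r,\ell}(s) \, \Psi^{r,\ell}(k^{1/3}Z(s))
\]
for the total field $u$ into the Helmholtz equation and enforcing the Neumann condition $\partial_\nu u = 0$ on $\partial K$ yields a hierarchy of transport equations for the coefficients $b_{p,q,r,\ell}$ coupled with Airy-type ODEs for the profile functions $\Psi^{r,\ell}$. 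The latter admit distinguished solutions whose $\tau \to +\infty$ asymptotics $\sum_j a_{r,\ell,j} \tau^{1+\ell-2r-3j}$ match classical geometrical optics on $\partial K^{IL}$, and whose $\tau \to -\infty$ behavior is Schwartz, matching creeping-wave exponential decay on $\partial K^{SR}$. The new feature compared with the Dirichlet case of \cite{MelroseTaylor85} is the extra index $\ell \in -\mathbb{N}$: it is forced by the compatibility terms generated when the normal derivative differentiates the boundary-layer factor $\Psi^{r,\ell}(k^{1/3}Z(s))$, and it is what produces the correction $(\ell+1)_{-}$ in the $k$-exponent.

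Next I would perform the phase extraction \eqref{eq:phaseextract}. On $\partial K$ the eikonal solution matches $\alpha \cdot x$ up to the creeping-wave phases already absorbed into $\Psi^{r,\ell}$, so dividing by $e^{ik\alpha \cdot \gamma(s)}$ preserves the form of the expansion while redefining the amplitudes $b_{p,q,r,\ell}$ as $C^\infty$ functions inheriting $2P$-periodicity from $\gamma$. To pass from a neighborhood of $\partial K^{SB}$ to the entire boundary I would use a partition of unity: on compact subsets of $\partial K^{IL}$ the polynomial $+\infty$ expansion of $\Psi^{r,\ell}$ reassembles into the standard geometrical-optics expansion of $\eta^{\rm slow}$, while on compact subsets of $\partial K^{SR}$ the Schwartz decay at $-\infty$ renders every term rapidly decreasing in $k$, so that any residual is absorbed into the asymptotic sum. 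This yields \eqref{eq:MT85Neumann} on all of $[0, 2P]$.

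Finally I would establish $\eta^{\rm slow} \in S^{0}_{2/3, 1/3}([0,2P] \times (0, \infty))$ by differentiating the terms $a_{p,q,r,\ell}$ directly. Each $s$-derivative either hits the smooth periodic factor $b_{p,q,r,\ell}$, which is bounded uniformly in $k$, or the argument $k^{1/3}Z(s)$, producing a factor of $k^{1/3}$ together with one more derivative of $\Psi^{r,\ell}$; this accounts for $\delta = 1/3$. Each $k$-derivative acts either on the explicit prefactor (gaining $k^{-1}$) or on $k^{1/3}Z(s)$, producing $k^{-2/3}\, Z(s) \, (\Psi^{r,\ell})^{(p+1)}(k^{1/3}Z(s))$; this accounts for $\rho = 2/3$. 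Order $0$ follows from the two-sided control on $\Psi^{r,\ell}$: on $\partial K^{IL}$ the $+\infty$ asymptotics combined with the prefactor $k^{-(1+2p+3q+r+\ell)/3+(\ell+1)_{-}}$ produce a contribution uniformly bounded in $k$, while on $\partial K^{SR}$ the Schwartz decay at $-\infty$ dominates any polynomial factor in $k^{1/3} Z(s)$. The hard part will be precisely what the appendix has to supply: closing the Neumann recursion with profile functions $\Psi^{r,\ell}$ that simultaneously possess polynomial $+\infty$ asymptotics and Schwartz $-\infty$ decay — a Borel-type resummation that is strictly more delicate than the Dirichlet construction of \cite{MelroseTaylor85} because of the extra coupling introduced by the normal derivative.
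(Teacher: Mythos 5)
Your proposal diverges from the paper at the crucial step and also contains a logical gap. For the local construction near the shadow boundary you propose a formal WKB / boundary-layer computation: substitute a stretched-coordinate ansatz into the Helmholtz equation with the Neumann condition, derive transport equations coupled to Airy-type ODEs, and close the hierarchy by choosing profile functions $\Psi^{r,\ell}$ with two-sided asymptotics. That is the Leontovich--Fock / Ludwig style argument, and it is not what the paper does. Theorem~\ref{thm:asymp-exp} is obtained by specializing to $n=1$ the result established in Appendix~\ref{sec:asymptotics}, which works entirely at the level of Fourier integral and pseudodifferential operators following \cite{MelroseTaylor85}: the boundary trace of the total field is expressed through the Kirchhoff operator $Q_N=(I+N^{-1}(\alpha\cdot\nu)\partial_t)F$, which factors as $J_1 D \mathcal{A}^{-1} J_2$ with $J_1,J_2$ elliptic FIOs, $D$ an Airy pseudodifferential operator, and $\mathcal{A}^{-1}$ the inverse Airy convolution operator; the expansion \eqref{eq:MT85Neumann} then comes from composing these operators, applying the inverse Fourier transform in $t$, the change of variable $\xi=k\zeta$, and a stationary phase argument. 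In particular the profile functions are not obtained by closing an ODE hierarchy or by Borel resummation; they are defined explicitly by $\Psi^{r,\ell}(\tau)=e^{-i\tau^3/3}\int \Phi^{r,\ell}(s)A_+(s)^{-1}e^{-is\tau}\,ds$, and their two-sided symbolic behavior is Lemma~9.34 of \cite{MelroseTaylor85}. A formal boundary-layer derivation would reproduce the shape of the ansatz but would not constitute a proof, which is precisely the difficulty the Melrose--Taylor microlocal construction is designed to overcome.

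There is also a genuine gap in your Hörmander-class step. Differentiating each individual $a_{p,q,r,\ell}$ in $s$ and $k$ shows that each term lies in $S^{\vartheta(p,q,r,\ell)}_{2/3,1/3}$ --- this is exactly Lemma~\ref{lemma:apqrl-Hor} later in the paper --- but it does not establish $\eta^{\rm slow}\in S^{0}_{2/3,1/3}$. An asymptotic expansion in a Hörmander class requires uniform symbol estimates on the remainders $\eta^{\rm slow}-\sum_{\text{finite}}a_{p,q,r,\ell}$, and the fact that $\eta^{\rm slow}$ itself is a symbol of order $0$ and type $(2/3,1/3)$ is a conclusion of the Melrose--Taylor construction (Theorem~9.36 of \cite{MelroseTaylor85}, reproduced locally as Theorem~\ref{thm:asymp-exp-local}), not something that can be read off term by term. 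The global extension to the whole of $[0,2P]$ is then Theorem~\ref{thm:asymp-exp-global}, proved by matching with the geometrical-optics expansion on the illuminated side and the rapid decay on the deep-shadow side, following \cite[Cor.~5.3]{DominguezEtAl07} and \cite{EcevitReitich09}. Your partition-of-unity / matching step is the right idea and does align with the paper; it is the local microlocal construction and the source of the $S^{0}_{2/3,1/3}$ membership that need to be replaced by the FIO argument.
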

For concise definitions of H\"{o}rmander classes and asymptotic expansions we refer to \cite[\S2.2]{EcevitReitich09}.

In this section, we study the asymptotic behavior of the terms $a_{p,q,r,\ell}$ appearing in the expansion \eqref{eq:MT85Neumann},
and first show that they belong to the H\"{o}rmander class $S^{\vartheta(p,q,r,\ell)}_{\frac{2}{3},\frac{1}{3}} ([0,2P]\times(0,\infty))$ where
\[
	\vartheta(p,q,r,\ell) = -\frac{1+2p+3q+r+\ell}{3} + (\ell+1)_{-} +
	\left\{
		\begin{array}{ll}
			0, & 1+\ell-2r-p < 0, 
			\\ [0.3 em]
			\frac{1+\ell-2r-p}{3}, & 1+\ell-2r-p \ge 0.
		\end{array}
	\right.
\]
We then use this result to carry out a similar study for the expressions $\rho^{\rm slow}_\beta$ which we now define.
Throughout the text, we use the standard convention that an empty sum is zero.

\begin{definition} \label{def:quantities}
Given $\beta \in \mathbb{Z}_{+}$, we define
\begin{equation} \label{eq:sigmarhobetaslow}
	\sigma^{\rm slow}_{\beta} = \sum_{(p,q,r,\ell) \in \mathcal{F}_{\beta}} a_{p,q,r,\ell}
	\qquad
	\text{and}
	\qquad
	\rho^{\rm slow}_{\beta} = \eta^{\rm slow} - \sigma^{\rm slow}_{\beta}
\end{equation}
where
\[
	\mathcal{F}_{\beta}
	= \left\{ (p,q,r,\ell) \in \mathbb{Z}_{+} \times \mathbb{Z}_{+} \times \mathbb{Z}_{+} \times (-\mathbb{N}) : \vartheta(p,q,r,\ell) > - \frac{\beta}{3} \right\}.
\]
We also set

\begin{equation} \label{eq:sigmarhobeta}
	\sigma_\beta = e^{ik \, \alpha \cdot \gamma} \sigma^{\rm slow}_{\beta}
	\qquad
	\text{and}
	\qquad
	\rho_{\beta} = \eta - \sigma_{\beta} = e^{ik \, \alpha \cdot \gamma} \rho^{\rm slow}_{\beta}.
\end{equation}
\end{definition}

As mentioned in the introduction, the use of C\'{e}a's lemma gives rise to a factor depending on the wavenumber which effectively means the need for higher number of degrees
freedom with increasing $k$. The goal of the previous definition is to eliminate this factor by incorporating sufficiently many terms in the asymptotic expansion into
any given continuous and coercive integral equation formulations. Indeed, as shown in the following analysis, the definition of $\sigma^{\rm slow}_\beta$
\eqref{eq:sigmarhobetaslow} is optimal in the sense that it leads to the balancing factor $k^{-\beta/3}$ subject to a minimum number of terms $a_{p,q,r,\ell}$
incorporated into the integral equation. This represents a very important step in the design of Galerkin approximation spaces which can provide prescribed error
tolerances with the utilization of frequency independent numbers of degrees of freedom. 

Let us begin our analysis by giving the following result which is immediate from the asymptotic behavior of $\Psi^{r,\ell}$ described in Theorem \ref{thm:asymp-exp}.
In what follows, the symbol $\lesssim$ is used to mean that an inequality holds up to a wavenumber independent factor. 

\begin{lemma} \label{lemma:Psilr}
For all $p,q,r \in \mathbb{Z}_+$ and $\ell \in \mathbb{Z}$, the estimates
\begin{equation} \label{eq:psi-derivatives}
	|(\Psi^{r,\ell})^{(p)}(\tau)|
	\lesssim
	\left\{ \!\!
		\begin{array}{ll}
			(1 + |\tau|)^{\gamma_{r,\ell}-p},
			& \text{if } p > 1+\ell -2r \ge 0,
			\\
			(1+|\tau|)^{1+\ell-2r-p},
			& \text{otherwise},
		\end{array}
	\right.
\end{equation}
hold for all $\tau \in \mathbb{R}$ where
\[
	1+\ell -2r \equiv \gamma_{r,\ell} \mod 3
	\qquad
	\text{with}
	\qquad
	\gamma_{r,\ell} \in \{ -3,-2,-1\}.
\]
\end{lemma}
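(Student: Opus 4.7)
The plan is to split $\mathbb{R}$ into three regions $\tau \le -A$, $|\tau| \le A$, and $\tau \ge A$, for a fixed constant $A > 0$; on each region I will control the derivative separately, with the only serious work concentrated on the third region, where the asymptotic expansion of $\Psi^{r,\ell}$ is in force. On $|\tau| \le A$, both $|(\Psi^{r,\ell})^{(p)}|$ and the right-hand side $(1+|\tau|)^{c}$ (for any fixed real exponent $c$) are bounded above and below by positive constants, so the estimate is immediate. On $\tau \le -A$, the rapid decrease of $\Psi^{r,\ell}$ and all its derivatives in the sense of Schwartz yields $|(\Psi^{r,\ell})^{(p)}(\tau)| \lesssim (1+|\tau|)^{-N}$ for every $N \in \mathbb{N}$, which trivially dominates any required polynomial rate once $N$ is chosen large enough.

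On $\tau \ge A$ I propose to differentiate the asymptotic expansion from Theorem~\ref{thm:asymp-exp} term by term. Writing $m := 1+\ell-2r$, each summand produces
\[
	\frac{d^{p}}{d\tau^{p}} \tau^{m-3j}
	= \Bigl( \prod_{i=0}^{p-1}(m-3j-i) \Bigr) \, \tau^{m-3j-p},
\]
which vanishes identically exactly when $m-3j \in \{0,1,\ldots,p-1\}$. The task is therefore to locate the smallest index $j$ for which this factor survives and to read off the corresponding exponent. If $m < 0$, or if $0 \le m$ with $p \le m$, then $j=0$ already yields a nonzero derivative and the leading contribution is $\tau^{m-p}$, matching the second line of \eqref{eq:psi-derivatives}. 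If instead $p > m \ge 0$, then the terms $j=0,1,\ldots,\lfloor m/3 \rfloor$ all satisfy $m-3j \in \{0,\ldots,p-1\}$ and vanish; the first surviving index is $j^{\ast} = \lfloor m/3 \rfloor + 1$, with exponent $m - 3 j^{\ast} = (m \bmod 3) - 3 \in \{-3,-2,-1\}$. This value is precisely $\gamma_{r,\ell}$, and the leading surviving contribution $\tau^{\gamma_{r,\ell}-p}$ matches the first line of \eqref{eq:psi-derivatives}. Since $\tau \ge A > 0$ makes $\tau$ and $1 + |\tau|$ comparable, the resulting bound transfers to the claimed $(1 + |\tau|)$-form with at most a constant loss.

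The one step requiring genuine care, and the principal technical obstacle, is the rigorous justification of termwise differentiation of the asymptotic series, namely a remainder estimate of the form
\[
	\Bigl| \frac{d^{p}}{d\tau^{p}} \Bigl( \Psi^{r,\ell}(\tau) - \sum_{j=0}^{N-1} a_{r,\ell,j} \, \tau^{m-3j} \Bigr) \Bigr|
	\lesssim \tau^{m-3N-p}
	\qquad \text{as } \tau \to +\infty,
\]
for $N$ chosen large enough that $m - 3N < \gamma_{r,\ell}$. This is a standard feature of the Airy-type symbol calculus in which the functions $\Psi^{r,\ell}$ reside, and I expect to extract it from the derivation in Appendix~\ref{sec:asymptotics} or directly from \cite{MelroseTaylor85}. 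Once this ingredient is in hand, combining it with the three-region decomposition above completes the proof.
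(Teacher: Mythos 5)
Your proof is correct and is exactly the argument the authors have in mind when they declare the lemma ``immediate from the asymptotic behavior of $\Psi^{r,\ell}$'' — the paper gives no explicit proof, so you have filled in what they elide. The three-region split handles the Schwartz decay at $-\infty$ and the compact part trivially, and on $\tau \ge A$ your combinatorics are right: with $m = 1+\ell-2r$, termwise differentiation kills the monomial $\tau^{m-3j}$ precisely when $m-3j \in \{0,\dots,p-1\}$, which for $p > m \ge 0$ wipes out $j = 0,\dots,\lfloor m/3\rfloor$ and leaves $j^\ast = \lfloor m/3\rfloor + 1$ as the leading survivor with exponent $(m \bmod 3)-3 = \gamma_{r,\ell}$. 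This is exactly the improvement over the naive symbol-class estimate $(1+|\tau|)^{m-p}$ that the first branch of \eqref{eq:psi-derivatives} records; the second branch is just the symbol-class bound when no cancellation occurs. Your one flagged step — that the remainder of the truncated expansion can be differentiated $p$ times with the expected gain $\tau^{-p}$ — is supplied by Lemma~\ref{eq:MT9.34}, which places $\Psi^{r,\ell}$ in the classical symbol class $S^{1+\ell-2r}(\mathbb{R})$ with the stated expansion; this is precisely the derivative-stable notion of asymptotic expansion (cf.\ the reference to~\cite[\S2.2]{EcevitReitich09} following Theorem~\ref{thm:asymp-exp}), so the ingredient you anticipate needing is indeed on hand in the paper. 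The only cosmetic remark is that you should cite Lemma~\ref{eq:MT9.34} rather than expect to ``extract'' it, since it is stated there verbatim.
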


With the aid of Theorem~\ref{thm:asymp-exp}, Lemma \ref{lemma:Psilr} and Lemma \ref{lemma:generalderivatives} in Appendix~\ref{sec:auxiliary},
we now characterize the H\"{o}rmander classes of
$a_{p,q,r,\ell}$ and derive wavenumber explicit estimates on their derivatives.

\begin{lemma}[H\"{o}rmander classes of $a_{p,q,r,\ell}$]
\label{lemma:apqrl-Hor}
For any $p,q,r \in \Zplus$ and $\ell \in \mathbb{Z}$, $a_{p,q,r,l}$ belongs to the H\"{o}rmander class $S^{\vartheta(p,q,r,\ell)}_{\frac{2}{3},\frac{1}{3}} ([0,2P]\times(0,\infty))$.
\end{lemma}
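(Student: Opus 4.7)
To show $a_{p,q,r,\ell} \in S^{\vartheta(p,q,r,\ell)}_{2/3,1/3}([0,2P]\times(0,\infty))$, I need the estimate
\[
	\bigl| \partial_s^n \partial_k^j \, a_{p,q,r,\ell}(s,k) \bigr|
	\; \lesssim \;
	k^{\vartheta(p,q,r,\ell) + n/3 - 2j/3}
\]
uniformly for $s \in [0,2P]$ and $k \geq 1$, for all $n,j \in \Zplus$. The plan is to differentiate the explicit formula $a_{p,q,r,\ell}(s,k) = k^{\alpha} b_{p,q,r,\ell}(s) (\Psi^{r,\ell})^{(p)}(k^{1/3} Z(s))$, where $\alpha = -(1+2p+3q+r+\ell)/3 + (\ell+1)_-$, using Lemma \ref{lemma:generalderivatives} (Faà di Bruno combined with Leibniz). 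The resulting expansion is a finite sum of terms of the form (scalar) $\cdot k^{\alpha - j_1} \cdot k^{(n-n_0)/3} \cdot k^{-j_2 \cdot 2/3} \cdot b^{(n_0)}_{p,q,r,\ell}(s) \cdot \prod_i Z^{(m_i)}(s) \cdot (\Psi^{r,\ell})^{(p+\mu)}(k^{1/3}Z(s))$, where $n_0 + \sum m_i = n$ and $j_1 + j_2 = j$, and $\mu$ counts the number of differentiations (in $s$ or in $k$) that hit the $\Psi$-factor rather than the prefactor $k^{\alpha}$ or $b_{p,q,r,\ell}$. Each differentiation of the $\Psi$-factor in $s$ produces one $k^{1/3}$ (chain rule) and raises the order of $\Psi$ by one; each differentiation in $k$ of the $\Psi$-factor produces one $k^{-2/3}$ and raises the order by one; derivatives of the prefactor $k^\alpha$ produce at most $k^{-1} \leq k^{-2/3}$ per $k$-derivative and are harmless.

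\textbf{Main bound.} I would then apply Lemma \ref{lemma:Psilr} to each factor $(\Psi^{r,\ell})^{(p+\mu)}(k^{1/3}Z(s))$, together with the obvious bounds $|b^{(n_0)}_{p,q,r,\ell}(s)|, |Z^{(m_i)}(s)| \lesssim 1$ on the compact set $[0,2P]$. Two regimes appear according to the two branches of \eqref{eq:psi-derivatives}. In the decaying regime $p + \mu > 1 + \ell - 2r \geq 0$, the $\Psi$-bound is $(1+|\tau|)^{\gamma_{r,\ell} - p - \mu} \leq 1$, so that factor contributes no growth. In the polynomial regime, the bound is $(1+|\tau|)^{1+\ell-2r-p-\mu}$, which, on $[0,2P]$, is $\lesssim k^{(1+\ell-2r-p-\mu)_+/3}$. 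Observe that each increment of $\mu$ either keeps $\Psi$ in the bounded regime (no cost) or decreases the polynomial exponent by $1$ (i.e.\ loses $k^{-1/3}$), exactly cancelling the $k^{+1/3}$ per $s$-derivative and contributing nothing worse than $k^{-2/3}$ per $k$-derivative. Hence the worst term across all Faà di Bruno contributions occurs at $\mu = 0$ (with the remaining $s$-derivatives hitting $Z$'s and $b$), giving a $\Psi$-contribution $\lesssim k^{\max\{0,\,(1+\ell-2r-p)/3\}}$, which is precisely the correction term appearing in the definition of $\vartheta(p,q,r,\ell)$.

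\textbf{Assembly and obstacle.} Combining the prefactor $k^\alpha$, the worst-case $\Psi$-contribution $k^{\max\{0,\,(1+\ell-2r-p)/3\}}$, and the chain-rule factors $k^{n/3 - 2j/3}$ yields exactly $k^{\vartheta(p,q,r,\ell) + n/3 - 2j/3}$, as required. The main obstacle is bookkeeping: one has to verify, in each of the subcases ($1+\ell-2r < 0$; $1+\ell-2r \geq 0$ with $p \leq 1+\ell-2r$; $p > 1+\ell-2r \geq 0$) and for every $\mu$ arising in the expansion, that the contribution really is dominated by the claimed worst case. The key observation that makes the bookkeeping collapse is the trade-off noted above: each additional order of $\Psi$ picked up from a chain-rule differentiation is either absorbed into a decaying regime or exactly compensates the $k^{\pm 1/3}$ produced by that differentiation. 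Once this is verified, the Hörmander estimate follows uniformly in $(n,j)$ and uniformly in $s \in [0,2P]$, $k \geq 1$.
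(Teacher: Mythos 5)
Your plan, key lemmas, and final bound all match the paper's own proof: you differentiate via Lemma~\ref{lemma:generalderivatives}, invoke Lemma~\ref{lemma:Psilr} for the growth of $(\Psi^{r,\ell})^{(p+\mu)}$, and assemble. The final estimate you quote, $k^\alpha \cdot k^{\max\{0,(1+\ell-2r-p)/3\}} \cdot k^{n/3-2j/3} = k^{\vartheta + n/3 - 2j/3}$, is correct, because you are multiplying three \emph{independent} uniform upper bounds: the $\Psi$-factor is at most $k^{\max\{0,(1+\ell-2r-p)/3\}}$ for every $\mu$, each $k$-derivative (whether it lands on $\Psi$ or on the prefactor $k^\alpha$) costs at most $k^{-2/3}$, and each $s$-derivative costs at most $k^{1/3}$.

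Where your write-up goes astray is the sentence claiming ``the worst term across all Fa\`{a} di Bruno contributions occurs at $\mu=0$.'' This is false, and it is internally inconsistent with the subsequent assembly step. If $\mu=0$ then no differentiation hits $\Psi$, so there are no chain-rule factors at all and the contribution of that term is only $k^\alpha \cdot k^{-j}\cdot k^{\max\{0,(1+\ell-2r-p)/3\}} = k^{\vartheta - j}$ --- which is \emph{weaker} (smaller) than $k^{\vartheta + n/3 - 2j/3}$, not the dominant contribution. In fact, writing $f(\mu) = \mu/3 + \max\{0,(1+\ell-2r-p-\mu)/3\}$ for the combined exponent, one checks that $f$ is nondecreasing: in the polynomial regime incrementing $\mu$ is cost-neutral (the $k^{1/3}$ is exactly cancelled, as you observe), but once $\Psi^{(p+\mu)}$ enters the decaying regime the $k^{1/3}$ per $s$-derivative is \emph{not} absorbed and accumulates. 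So the dominant term is at $\mu$ maximal (i.e.\ all $s$-derivatives and $k$-derivatives hitting $\Psi$), and whether the polynomial cap $k^{(1+\ell-2r-p)/3}$ or the accumulated $k^{(n+m)/3}$ dominates depends on the comparison $n+m \lessgtr 1+\ell-2r-p$. The paper's proof tracks exactly this dichotomy in its case analysis (see the split by $J = \min\{n+m, 1+\ell-2r-p\}$). Your assembly survives despite the wrong identification of the worst term only because it tacitly replaces the per-$\mu$ product by a product of separately maximized factors. You should drop the ``$\mu=0$ is worst'' claim and either present the argument as a product of independent uniform bounds (which is what the assembly actually does), or carry out the sub-case analysis as the paper does.
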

\begin{proof}
Given $n,m,p,q,r \in \Zplus$ and $\ell \in \mathbb{Z}$, an appeal to Lemma~\ref{lemma:generalderivatives} entails
\begin{equation} \label{eq:apq-derivatives}
	|D_s^{n}D_{k}^{m} \, a_{p,q,r,\ell}(s,k)|
	\lesssim
	k^{-\frac{1+2p+3q+r+\ell}{3}+(\ell+1)_{-}-m}
	\sum_{0 \le j \le n+m}
	k^{\frac{j}{3}} 
	|(\Psi^{r,\ell})^{(p+j)}(k^{\frac{1}{3}}Z(s))|
\end{equation}
for all $(s,k) \in [0,2P] \times (0,\infty)$.

When $1+\ell-2r <0$, use of \eqref{eq:psi-derivatives} in \eqref{eq:apq-derivatives} implies
\begin{align*}
	|D_s^{n}D_{k}^{m} \, a_{p,q,r,\ell}(s,k)|
	& \lesssim
	k^{-\frac{1+2p+3q+r+\ell}{3}+(\ell+1)_{-}-m}
	\sum_{0 \le j \le n+m}
	k^{\frac{j}{3}} (1+k^{\frac{1}{3}} |Z(s)|)^{1+\ell-2r-p-j}
	\\
	&
	\lesssim
	k^{-\frac{1+2p+3q+r+\ell}{3}+(\ell+1)_{-}-m}
	\sum_{0 \le j \le n+m}
	k^{\frac{j}{3}}
	\lesssim
	k^{-\frac{1+2p+3q+r+\ell}{3}+(\ell+1)_{-}+\frac{n}{3}-\frac{2m}{3}}
\end{align*}
and this, in turn, implies that $a_{p,q,r,l} \in S^{\vartheta(p,q,r,\ell)}_{\frac{2}{3},\frac{1}{3}} ([0,2P]\times(0,\infty))$.

When $1+\ell-2r \ge 0$, use of \eqref{eq:psi-derivatives} in \eqref{eq:apq-derivatives} gives
\begin{multline} \label{eq:detail}
	|D_s^{n} D_{k}^{m} \, a_{p,q,r,\ell}(s,k)|
	\lesssim
	k^{-\frac{1+2p+3q+r+\ell}{3}+(\ell+1)_{-}-m}
	\\
	\times
	\Big\{
		\sum_{0 \le j \le n+m \atop j \le 1+\ell-2r-p} k^{\frac{j}{3}} (1+k^{\frac{1}{3}} |Z(s)|)^{1+\ell-2r-p-j}
		+ \sum_{0 \le j \le n+m \atop j > 1+\ell-2r-p} k^{\frac{j}{3}} (1+k^{\frac{1}{3}} |Z(s)|)^{\gamma_{r,\ell}-p-j}
	\Big\}.
\end{multline}
If $1+\ell-2r-p < 0$, \eqref{eq:detail} reduces to
\begin{align*}
	|D_s^{n}D_{k}^{m} \, a_{p,q,r,\ell}(s,k)|
	& \lesssim
	k^{-\frac{1+2p+3q+r+\ell}{3}+(\ell+1)_{-}-m}
	\sum_{0 \le j \le n+m} k^{\frac{j}{3}} (1+k^{\frac{1}{3}} |Z(s)|)^{\gamma_{r,\ell}-p-j}
	\\
	& \lesssim
	k^{-\frac{1+2p+3q+r+\ell}{3}+(\ell+1)_{-}-m}
	\sum_{0 \le j \le n+m} k^{\frac{j}{3}}
	\lesssim
	k^{-\frac{1+2p+3q+r+\ell}{3}+(\ell+1)_{-}-m} \,
	k^{\frac{n+m}{3}}
	\\
	& \lesssim
	k^{-\frac{1+2p+3q+r+\ell}{3}+(\ell+1)_{-}+\frac{n}{3}-\frac{2m}{3}}
	\lesssim k^{\vartheta(p,q,r,\ell)+\frac{n}{3}-\frac{2m}{3}}
\end{align*}
so that $a_{p,q,r,l} \in S^{\vartheta(p,q,r,\ell)}_{\frac{2}{3},\frac{1}{3}} ([0,2P]\times(0,\infty))$. If $1+\ell-2r-p \ge 0$, 
setting $J = \min \{ n+m, 1+\ell-2r - p\}$, \eqref{eq:detail} takes on the form
\begin{align*}
	|D_s^{n}D_{k}^{m} \, a_{p,q,r,\ell}(s,k)|
	& \lesssim
	k^{-\frac{1+2p+3q+r+\ell}{3}+(\ell+1)_{-}-m}
	\\
	& \quad \times
	\Big\{
		\sum_{0 \le j  \le J} k^{\frac{j}{3}} (1+k^{\frac{1}{3}} |Z(s)|)^{1+\ell-2r-p-j}
		+ \sum_{J < j \le n+m} k^{\frac{j}{3}} (1+k^{\frac{1}{3}} |Z(s)|)^{\gamma_{r,\ell}-p-j}
	\Big\}
	\\
	& \lesssim
	k^{-\frac{1+2p+3q+r+\ell}{3}+(\ell+1)_{-}-m}
	\times
	\Big\{
		\sum_{0 \le j \le J} k^{\frac{j}{3}} k^{\frac{1+\ell-2r-p-j}{3}}
		+ \sum_{J < j \le n+m} k^{\frac{j}{3}}
	\Big\}
	\\
	& \lesssim
	k^{-\frac{1+2p+3q+r+\ell}{3}+(\ell+1)_{-}-m}
	\times
	\Big\{
		k^{\frac{1+\ell-2r-p}{3}}
		+ \sum_{J < j \le n+m} k^{\frac{j}{3}}
	\Big\}.
\end{align*}
This entails, when $0 \le 1+\ell-2r - p < n+m$
\begin{align*}
	|D_s^{n}D_{k}^{m} \, a_{p,q,r,\ell}(s,k)|
	& \lesssim
	k^{-\frac{1+2p+3q+r+\ell}{3}+(\ell+1)_{-}-m}
	\times
	\left\{
		k^{\frac{1+\ell-2r-p}{3}}
		+ k^{\frac{n+m}{3}}
	\right\}
	\\
	&
	\lesssim
	k^{-\frac{1+2p+3q+r+\ell}{3}+(\ell+1)_{-}-m} \, k^{\frac{n+m}{3}}
	\lesssim k^{-\frac{1+2p+3q+r+\ell}{3}+(\ell+1)_{-}+\frac{n}{3}-\frac{2m}{3}}
	\\
	& \lesssim k^{\vartheta(p,q,r,\ell)+\frac{n}{3}-\frac{2m}{3}},
\end{align*}
and when $n+m \le 1+\ell-2r - p$
\begin{align*}
	|D_s^{n}D_{k}^{m} \, a_{p,q,r,\ell}(s,k)|
	& \lesssim
	k^{-\frac{1+2p+3q+r+\ell}{3} +(\ell+1)_{-}-m}
	k^{\frac{1+\ell-2r-p}{3}}
	\\
	& \lesssim
	k^{\vartheta(p,q,r,\ell)-m}
	\lesssim k^{\vartheta(p,q,r,\ell)+\frac{n}{3}-\frac{2m}{3}}.
\end{align*}
These show that $a_{p,q,r,l} \in S^{\vartheta(p,q,r,\ell)}_{\frac{2}{3},\frac{1}{3}} ([0,2P]\times(0,\infty))$ when $1+\ell-2r-p \ge 0$.
\end{proof}

For the developments that follow, we define
\[
	W(s,k) = k^{-\frac{1}{3}} + |\omega(s)|
	\quad
	\text{with}
	\quad
	\omega(s) = (s-t_1)(t_2-s)
\]
for any $k > 0$.

\begin{lemma}[Wavenumber explicit estimates on the derivatives of $a_{p,q,r,\ell}$]
\label{lemma:apqrl-der}
Given $k_0 > 0$ and $n,p,q,r \in \Zplus$ and $\ell \in \mathbb{Z}$, the estimate
\begin{equation}
\label{eq:apq-estimate}
	| D_s^{n} a_{p,q,r,\ell}(s,k)|
	\lesssim
	k^{-\frac{1+2p+3q+r+\ell}{3}+(\ell+1)_{-}}
	\times
	\left\{
		\begin{array}{ll}
			\sum\limits_{0 \le j \le n} W(s,k)^{-j},
			& 1+\ell-2r-p < 0,
			\\
			\lambda_{p,r,\ell,n}(s,k),
			& 1+\ell-2r-p \ge 0,
		\end{array}
	\right.
\end{equation}
holds for all $(s,k) \in [0,2P] \times [k_0,\infty)$ where
\[
	\lambda_{p,r,\ell,n}(s,k)
	= k^{\frac{1+\ell-2r-p}{3}}
	+ \sum_{1+\ell-2r-p < j \le n} W(s,k)^{-j}.
\]

\end{lemma}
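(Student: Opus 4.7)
The plan is to specialize the derivative identity already used in the proof of Lemma~\ref{lemma:apqrl-Hor} to $m=0$ and then to replace the crude polynomial bounds from Lemma~\ref{lemma:Psilr} by sharper weighted bounds involving $W(s,k)$. Setting $m=0$ in \eqref{eq:apq-derivatives} gives
\[
	|D_s^{n} a_{p,q,r,\ell}(s,k)|
	\lesssim
	k^{-\frac{1+2p+3q+r+\ell}{3}+(\ell+1)_{-}}
	\sum_{0 \le j \le n}
	k^{\frac{j}{3}}
	|(\Psi^{r,\ell})^{(p+j)}(k^{\frac{1}{3}} Z(s))|,
\]
so the lemma reduces to estimating each term on the right by either $W(s,k)^{-j}$ or, in the regime $j \le 1+\ell-2r-p$, by a wavenumber power that will combine into $k^{(1+\ell-2r-p)/3}$.

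The crux is the quantitative comparison $|Z(s)| \asymp |\omega(s)|$. By Theorem~\ref{thm:asymp-exp}, $Z$ is $2P$-periodic, $C^\infty$, positive on $\partial K^{\rm IL}$, negative on $\partial K^{\rm SR}$, and vanishes precisely to first order at $t_1$ and $t_2$. Since $\omega(s) = (s-t_1)(t_2-s)$ has exactly the same simple zeros and the same sign on $[0,2P]$, the ratio $Z/\omega$ extends to a strictly positive smooth function, yielding $|Z(s)| \asymp |\omega(s)|$ uniformly in $s$. Combined with $k \ge k_0 > 0$, this produces the key equivalence $1+k^{1/3}|Z(s)| \asymp 1+k^{1/3}|\omega(s)| = k^{1/3} W(s,k)$.

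With this in hand, the rest is case analysis driven by Lemma~\ref{lemma:Psilr}. When $1+\ell-2r-p < 0$, for every $j \ge 0$ the exponent produced by Lemma~\ref{lemma:Psilr} (either $\gamma_{r,\ell}-p-j$ in the first branch, where $\gamma_{r,\ell} \le -1$, or $1+\ell-2r-p-j$ in the second) is at most $-j$, so the corresponding $\Psi$-derivative is controlled by $(1+k^{1/3}|Z(s)|)^{-j} \asymp k^{-j/3} W(s,k)^{-j}$; the prefactor $k^{j/3}$ cancels and the terms assemble into $\sum_{0 \le j \le n} W(s,k)^{-j}$. When $1+\ell-2r-p \ge 0$ I split the sum at $j = 1+\ell-2r-p$: for $j$ at or below the threshold, Lemma~\ref{lemma:Psilr} puts us in its second branch with a nonnegative exponent $1+\ell-2r-p-j$, and since $W$ is bounded above this term is controlled by $k^{(1+\ell-2r-p-j)/3}$; multiplying by $k^{j/3}$ kills the $j$-dependence and the finitely many contributions collapse into $k^{(1+\ell-2r-p)/3}$. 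For $j$ strictly above the threshold we re-enter the first branch exactly as before, producing $W(s,k)^{-j}$, and together these deliver $\lambda_{p,r,\ell,n}(s,k)$.

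The main obstacle is the sharp comparison $|Z| \asymp |\omega|$: every subsequent manipulation is routine once this equivalence is established, but the equivalence itself requires that $Z$ has \emph{only} the two simple zeros $t_1,t_2$ on a period and matches $\omega$ in sign elsewhere. Theorem~\ref{thm:asymp-exp} provides exactly this qualitative information, so no new analytic input is needed; the remaining work is careful bookkeeping of the two branches of Lemma~\ref{lemma:Psilr} and of the split at $j = 1+\ell-2r-p$.
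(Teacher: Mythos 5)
Your proof is correct and essentially mirrors the paper's: set $m=0$ in the inequality supplied by Lemma~\ref{lemma:generalderivatives}, invoke the two branches of Lemma~\ref{lemma:Psilr} for $(\Psi^{r,\ell})^{(p+j)}$, and split the index sum at $j = 1+\ell-2r-p$. The one step you make explicit that the paper passes over silently is the comparison $|Z(s)| \asymp |\omega(s)|$ --- the paper simply swaps $|Z|$ for $|\omega|$ mid-chain --- and your justification (matching first-order vanishing and sign at $t_1,t_2$, hence a strictly positive continuous ratio on the compact period) is exactly what is needed to legitimize that substitution in both directions.
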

\begin{proof}
Given $n,p,q,r \in \Zplus$ and $\ell \in \mathbb{Z}$, use of Lemma~\ref{lemma:generalderivatives} entails
\begin{equation} \label{eq:apq-derivativess}
	|D_s^{n} \, a_{p,q,r,\ell}(s,k)|
	\lesssim
	k^{-\frac{1+2p+3q+r+\ell}{3} +(\ell+1)_{-}}
	\sum_{0 \le j \le n}
	k^{\frac{j}{3}} 
	|(\Psi^{r,\ell})^{(p+j)}(k^{\frac{1}{3}}Z(s))|
\end{equation}
for all $(s,k) \in [0,2P] \times (0,\infty)$.

When $1+\ell-2r <0$, use of \eqref{eq:psi-derivatives} in \eqref{eq:apq-derivativess} implies
\begin{align*}
	|D_s^{n} \, a_{p,q,r,\ell}(s,k)|
	& \lesssim
	k^{-\frac{1+2p+3q+r+\ell}{3} +(\ell+1)_{-}}
	\sum_{0 \le j \le n}
	k^{\frac{j}{3}} (1+k^{\frac{1}{3}} |Z(s)|)^{1+\ell-2r-p-j}
	\\
	& \lesssim
	k^{-\frac{1+2p+3q+r+\ell}{3} +(\ell+1)_{-}}
	\sum_{0 \le j \le n}
	k^{\frac{j}{3}} (1+k^{\frac{1}{3}} |\omega(s)|)^{1+\ell-2r-p-j}
	\\
	& \lesssim
	k^{-\frac{1+2p+3q+r+\ell}{3} +(\ell+1)_{-}}
	\sum_{0 \le j \le n}
	k^{\frac{j}{3}} (1+k^{\frac{1}{3}} |\omega(s)|)^{-j}
	\\
	& \lesssim k^{-\frac{1+2p+3q+r+\ell}{3} +(\ell+1)_{-}}
	\sum_{0 \le j \le n}
	(k^{-\frac{1}{3}} + |\omega(s)|)^{-j}.
\end{align*}

When $1+\ell-2r \ge 0$, use of \eqref{eq:psi-derivatives} in \eqref{eq:apq-derivativess} gives
\begin{multline} \label{eq:detail1}
	|D_s^{n} \, a_{p,q,r,\ell}(s,k)|
	\lesssim
	k^{-\frac{1+2p+3q+r+\ell}{3} +(\ell+1)_{-}}
	\\
	\times
	\Big\{
		\sum_{0 \le j \le n \atop j \le 1+\ell-2r-p} k^{\frac{j}{3}} (1+k^{\frac{1}{3}} |Z(s)|)^{1+\ell-2r-p-j}
		+ \sum_{0 \le j \le n \atop j > 1+\ell-2r-p} k^{\frac{j}{3}} (1+k^{\frac{1}{3}} |Z(s)|)^{\gamma_{r,\ell}-p-j}
	\Big\}.
\end{multline}
If $1+\ell-2r-p < 0$, \eqref{eq:detail1} reduces to
\begin{align*}
	|D_s^{n} \, a_{p,q,r,\ell}(s,k)|
	& \lesssim
	k^{-\frac{1+2p+3q+r+\ell}{3} +(\ell+1)_{-}}
	\sum_{0 \le j \le n} k^{\frac{j}{3}} (1+k^{\frac{1}{3}} |Z(s)|)^{\gamma_{r,\ell}-p-j}
	\\
	& \lesssim
	k^{-\frac{1+2p+3q+r+\ell}{3} +(\ell+1)_{-}}
	\sum_{0 \le j \le n} k^{\frac{j}{3}} (1+k^{\frac{1}{3}} |Z(s)|)^{-j}
	\\
	& \lesssim
	k^{-\frac{1+2p+3q+r+\ell}{3} +(\ell+1)_{-}}
	\sum_{0 \le j \le n}  k^{\frac{j}{3}} (1+k^{\frac{1}{3}} |\omega(s)|)^{-j}
	\\
	& =
	k^{-\frac{1+2p+3q+r+\ell}{3} +(\ell+1)_{-}}
	\sum_{0 \le j \le n}  (k^{-\frac{1}{3}}+|\omega(s)|)^{-j}.
\end{align*}
If $1+\ell-2r-p \ge 0$, 
setting $J = \min \{ n, 1+\ell-2r - p\}$, \eqref{eq:detail1} becomes
\begin{align*}
	|D_s^{n}D_{k}^{m} \, a_{p,q,r,\ell}(s,k)|
	& \lesssim
	k^{-\frac{1+2p+3q+r+\ell}{3} +(\ell+1)_{-}}
	\\
	& \quad \times
	\Big\{
		\sum_{0 \le j  \le J} k^{\frac{j}{3}} (1+k^{\frac{1}{3}} |Z(s)|)^{1+\ell-2r-p-j}
		+ \sum_{J < j \le n} k^{\frac{j}{3}} (1+k^{\frac{1}{3}} |Z(s)|)^{\gamma_{r,\ell}-p-j}
	\Big\}
	\\
	& \lesssim
	k^{-\frac{1+2p+3q+r+\ell}{3} +(\ell+1)_{-}}
	\\
	& \quad \times
	\Big\{
		\sum_{0 \le j \le J} k^{\frac{j}{3}} k^{\frac{1+\ell-2r-p-j}{3}}
		+ \sum_{J < j \le n} k^{\frac{j}{3}} (1+k^{\frac{1}{3}} |\omega(s)|)^{\gamma_{r,\ell}-p-j}
	\Big\}
	\\
	& \lesssim
	k^{-\frac{1+2p+3q+r+\ell}{3} +(\ell+1)_{-}}
	\times
	\Big\{
		k^{\frac{1+\ell-2r-p}{3}}
		+ \sum_{J < j \le n} k^{\frac{j}{3}} (1+k^{\frac{1}{3}} |\omega(s)|)^{-j}
	\Big\}
	\\
	& =
	k^{-\frac{1+2p+3q+r+\ell}{3} +(\ell+1)_{-}}
	\times
	\Big\{
		k^{\frac{1+\ell-2r-p}{3}}
		+ \sum_{j=J+1}^{n} (k^{-\frac{1}{3}} + |\omega(s)|)^{-j}
	\Big\}.
\end{align*}
Thus the result follows.
\end{proof}

Using Lemma~\ref{lemma:apqrl-der}, we can obtain the following.

\begin{corollary}[Wavenumber explicit estimates on the derivatives of $a_{p,q,r,\ell}$] \label{cor:apkldersimple}
Given $k_0 > 0$ and $n,p,q,r \in \Zplus$ and $\ell \in \mathbb{Z}$, the estimate
\begin{equation}
\label{eq:apq-estimatecompact}
	| D_s^{n} a_{p,q,r,\ell}(s,k)|
	\lesssim
	k^{\vartheta(p,q,r,\ell)} \, W(s,k)^{-n} 
\end{equation}
holds for all $(s,k) \in [0,2P] \times [k_0,\infty)$.
\end{corollary}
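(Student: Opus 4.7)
The plan is to read off the corollary as a direct consequence of Lemma~\ref{lemma:apqrl-der} together with two elementary facts about the weight $W(s,k)=k^{-1/3}+|\omega(s)|$ on $[0,2P]\times[k_0,\infty)$. First, since $\omega$ is a fixed polynomial restricted to a bounded interval and $k\ge k_0>0$, there exists a constant $M>0$ (depending only on $K$ and $k_0$) with $W(s,k)\le M$, equivalently $W(s,k)^{-n}\gtrsim 1$ uniformly. Second, trivially $W(s,k)^{-1}\le k^{1/3}$, but what I actually use is the one-sided bound $W^{n-j}\le M^{\,n-j}$ for $0\le j\le n$, which rewrites as
\[
    W(s,k)^{-j}\;=\;W(s,k)^{-n}\,W(s,k)^{\,n-j}\;\lesssim\;W(s,k)^{-n}
    \qquad (0\le j\le n).
\]

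With these two facts in hand, the two cases of Lemma~\ref{lemma:apqrl-der} collapse onto the desired inequality. In the case $1+\ell-2r-p<0$, the prefactor produced by Lemma~\ref{lemma:apqrl-der} is exactly $k^{\vartheta(p,q,r,\ell)}$, so the bound $\sum_{0\le j\le n}W(s,k)^{-j}\lesssim (n+1)\,W(s,k)^{-n}\lesssim W(s,k)^{-n}$ gives the claim immediately.

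In the case $1+\ell-2r-p\ge 0$, write $A=1+\ell-2r-p\ge 0$. I would split $\lambda_{p,r,\ell,n}(s,k)=k^{A/3}+\sum_{A<j\le n}W(s,k)^{-j}$ into its two pieces. The first piece is handled by $1\lesssim W(s,k)^{-n}$, which yields $k^{A/3}\lesssim k^{A/3}W(s,k)^{-n}$; for the second piece, the display above gives $\sum_{A<j\le n}W(s,k)^{-j}\lesssim W(s,k)^{-n}\lesssim k^{A/3}W(s,k)^{-n}$ (the last step again using $k^{A/3}\ge \min\{1,k_0^{A/3}\}$ since $A\ge 0$ and $k\ge k_0$). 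Multiplying through by the common prefactor $k^{-(1+2p+3q+r+\ell)/3+(\ell+1)_{-}}$ and recalling the definition of $\vartheta(p,q,r,\ell)$ finishes this case.

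There is essentially no main obstacle here: all the real work has been done in Lemma~\ref{lemma:apqrl-der}, and the corollary is a packaging step that trades the term-by-term bookkeeping in $\lambda_{p,r,\ell,n}$ for the single cleaner weight $W(s,k)^{-n}$. The only mild point to keep track of is that the implicit constants may depend on $n$, $k_0$, and $\|\omega\|_\infty$, but none of these are excluded by the hypothesis and all are independent of $k$, which is what the symbol $\lesssim$ permits.
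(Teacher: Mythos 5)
Your proposal is correct and takes the approach the paper clearly intends (the paper omits the proof, stating only that the corollary follows from Lemma~\ref{lemma:apqrl-der}). The two elementary facts you isolate — that $W(s,k)\le k_0^{-1/3}+\max_{[0,2P]}|\omega|$ gives $W(s,k)^{-j}\lesssim W(s,k)^{-n}$ for $0\le j\le n$, and that $k^{A/3}\gtrsim 1$ for $A\ge 0$, $k\ge k_0$ — are exactly what is needed to collapse both branches of the lemma into the single bound $k^{\vartheta(p,q,r,\ell)}W(s,k)^{-n}$, and you correctly handle the bookkeeping with the prefactor to recover the definition of $\vartheta$.
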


We now make use of Lemma \ref{lemma:apqrl-Hor} and Corollary \ref{cor:apkldersimple} to characterize the
H\"{o}rmander class and derive wavenumber explicit estimates on the derivatives of the envelope $\rho_\beta^{\rm slow}$
introduced in Definition \ref{def:quantities}.

\begin{theorem} \label{thm:etamder}
For any $\beta \in \mathbb{Z}_+$, the envelope $\rho_\beta^{\rm slow}(s,k)$ belongs to $S^{-\frac{\beta}{3}}_{\frac{2}{3},\frac{1}{3}}([0,P] \times (0,\infty))$.
Moreover, given $k_0 > 1$ and $n \in \Zplus$, the estimate
\[
	|D_s^n \rho_\beta^{\rm slow}(s,k)|
	\lesssim k^{-\frac{\beta}{3}} \, W(s,k)^{-n}
\]
holds for all $(s,k) \in [0,2P] \times [k_0,\infty)$.
\end{theorem}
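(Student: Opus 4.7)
The plan has two parts, matching the two assertions of the theorem. The H\"ormander class statement $\rho^{\rm slow}_\beta \in S^{-\beta/3}_{2/3,1/3}$ would follow formally from the defining property of the asymptotic expansion in Theorem~\ref{thm:asymp-exp}. I would first record that the index set $\mathcal{F}_\beta$ is finite, since $\vartheta(p,q,r,\ell) \to -\infty$ as any of $p,q,r$ grows or as $\ell \to -\infty$ (for the latter, the dominant balance $-\ell/3 + (\ell+1)_- \sim 2\ell/3$ is negative). Consequently $\sigma^{\rm slow}_\beta$ is a finite partial sum that exhausts all terms of order strictly greater than $-\beta/3$ in the Melrose--Taylor expansion, so the remainder $\rho^{\rm slow}_\beta = \eta^{\rm slow} - \sigma^{\rm slow}_\beta$ belongs to $S^{-\beta/3}_{2/3,1/3}$ by definition of a H\"ormander-class asymptotic expansion.

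For the pointwise derivative estimate, the bare H\"ormander class only yields $|D_s^n \rho^{\rm slow}_\beta| \lesssim k^{-\beta/3 + n/3}$, which is strictly weaker than the claim outside the shadow-boundary layer $|\omega(s)| \lesssim k^{-1/3}$. To recover the sharp spatial weight, I would use the telescoping decomposition
\[
	\rho^{\rm slow}_\beta = \rho^{\rm slow}_{\beta + n} + \sum_{(p,q,r,\ell) \in \mathcal{F}_{\beta + n} \setminus \mathcal{F}_\beta} a_{p,q,r,\ell},
\]
and treat each piece by a different estimate. Every addend of the finite sum satisfies $\vartheta(p,q,r,\ell) \le -\beta/3$, so Corollary~\ref{cor:apkldersimple} bounds it termwise by $k^{\vartheta(p,q,r,\ell)} W(s,k)^{-n} \lesssim k^{-\beta/3} W(s,k)^{-n}$, and this survives finite summation. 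For the residual, the first part of the theorem applied with $\beta + n$ in place of $\beta$ puts $\rho^{\rm slow}_{\beta + n}$ in $S^{-(\beta+n)/3}_{2/3,1/3}$, yielding $|D_s^n \rho^{\rm slow}_{\beta + n}| \lesssim k^{-(\beta+n)/3 + n/3} = k^{-\beta/3}$. Because $|\omega|$ is bounded on $[0,2P]$ and $k^{-1/3} \le 1$ for $k \ge k_0 > 1$, the weight $W(s,k)$ is uniformly bounded above, so $W(s,k)^{-n}$ is bounded below and $k^{-\beta/3} \lesssim k^{-\beta/3} W(s,k)^{-n}$; the two contributions then add to the claimed bound.

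The main obstacle is identifying the correct splitting level. Neither ingredient works alone: the coarse H\"ormander bound cannot produce the spatial weight $W^{-n}$, while Corollary~\ref{cor:apkldersimple} cannot be summed over the full infinite expansion. The choice $\beta' = \beta + n$ is exactly what makes the residual's derivative-loss $+n/3$ cancel its improved order $-(\beta+n)/3$ to give the clean factor $k^{-\beta/3}$, while simultaneously keeping the split-off block finite and termwise controlled by the sharp local estimate.
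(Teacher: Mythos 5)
Your proof takes essentially the same route as the paper's: the first part invokes the Hörmander-class asymptotic-expansion property of $\eta^{\rm slow}$ (together with the fact, from Lemma~\ref{lemma:apqrl-Hor}, that $a_{p,q,r,\ell} \in S^{\vartheta(p,q,r,\ell)}_{2/3,1/3}$ and $\max\{\vartheta(p,q,r,\ell) : (p,q,r,\ell) \notin \mathcal{F}_\beta\} = -\beta/3$), and the second part uses exactly the paper's telescoping decomposition $\rho^{\rm slow}_\beta = \rho^{\rm slow}_{\beta+n} + \sum_{\mathcal{F}_{\beta+n}\setminus\mathcal{F}_\beta} a_{p,q,r,\ell}$, estimating the finite block via Corollary~\ref{cor:apkldersimple} and the tail via the $S^{-(\beta+n)/3}_{2/3,1/3}$ bound, with the same cancellation of the $+n/3$ derivative loss. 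The only additions are minor expository ones (finiteness of $\mathcal{F}_\beta$, uniform boundedness of $W$) that the paper leaves implicit; the argument is correct and matches the paper's proof.
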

\begin{proof}
Since $\eta^{\rm slow} \in S^{0}_{\frac{2}{3},\frac{1}{3}}$ and $a_{p,q,r,\ell} \in S^{\vartheta(p,q,r,\ell)}_{\frac{2}{3},\frac{1}{3}}$,
given $\beta \in \Zplus$, by definition of H\"{o}rmander classes, we have
$\rho^{\rm slow}_{\beta}
= \eta^{\rm slow} - \sigma^{\rm slow}_{\beta}
= \eta^{\rm slow} - \sum_{(p,q,r,\ell) \in \mathcal{F}_\beta} a_{p,q,r,\ell}
\in S^{\vartheta_\beta}_{\frac{2}{3},\frac{1}{3}}$ with
\[
	\vartheta_{\beta}
	= \max \{\vartheta(p,q,r,\ell):(p,q,r,\ell) \in \Zplus \times \Zplus \times \Zplus \times (-\mathbb{N}) \backslash \mathcal{F}_{\beta}\}.
\]
By definitions of $\vartheta(p,q,r,\ell)$ and $\mathcal{F}_{\beta}$, we have $\vartheta_{\beta} = -\frac{\beta}{3}$ so that
$\rho^{\rm slow}_{\beta} \in S^{-\frac{\beta}{3}}_{\frac{2}{3},\frac{1}{3}}$. 

As for the estimate, given $n \in \Zplus$,
since
\[
	\rho_{\beta}^{\rm slow} = \rho_{\beta+n}^{\rm slow}
	+ \sum_{(p,q,r,\ell) \in \mathcal{F}_{\beta+n} \backslash \mathcal{F}_{\beta}} a_{p,q,r,\ell}
\]
and $\rho_{\beta+n}^{\rm slow} \in S^{-\frac{\beta+n}{3}}_{\frac{2}{3},\frac{1}{3}}$, we have
\[
	|D_s^n \rho_{\beta}^{\rm slow}(s,k)|
	\lesssim
	(1+k)^{-\frac{\beta+n}{3} + \frac{n}{3}} +
	\sum_{(p,q,r,\ell) \in \mathcal{F}_{\beta+n} \backslash \mathcal{F}_{\beta}}
	|D_s^n a_{p,q,r,\ell}(s,k)|.
\]
Thus the preceding corollary implies
\begin{align*}
	|D_s^n \rho_{\beta}^{\rm slow}(s,k)|
	& \lesssim
	(1+k)^{-\frac{\beta+n}{3} + \frac{n}{3}}
	+  k^{\max \{ \nu(p,q,r,\ell) : (p,q,r,\ell) \in \mathcal{F}_{\beta+n} \backslash \mathcal{F}_{\beta} \}} \, W(s,k)^{-n} 
	\\
	& \lesssim
	(1+k)^{-\frac{\beta+n}{3} + \frac{n}{3}} + 
	k^{-\frac{\beta}{3}} \, W(s,k)^{-n} 
	\lesssim
	k^{-\frac{\beta}{3}} \, W(s,k)^{-n},
\end{align*}
and this completes the proof.
\end{proof}

\section{Galerkin boundary element methods and convergence analyses} \label{sec:4}

Throughout this section we assume that an integral equation formulation
\begin{equation} \label{eq:inteq}
	\mathcal{R}_k \eta = f
\end{equation}
is given to deal with the problem \eqref{eq:Helmholtz} and is continuous and coercive for all $k \ge k_0$ for some $k_0 > 1$, with continuity and coercivity
constants $C_k$ and $c_k$. We also assume that $\sigma^{\rm slow}_{\beta}$ \eqref{eq:sigmarhobetaslow} is available for some $\beta \in \Zplus$.
In this case,
\eqref{eq:inteq} can be re-written in terms of the new unknown $\rho_\beta = \eta - \sigma_\beta$ as
\begin{equation} \label{eq:inteqnew}
	\mathcal{R}_k \rho_\beta = f_\beta
\end{equation}
where $f_\beta = f - \mathcal{R}_k\sigma_\beta$. Note that $\rho_\beta = \eta$ and $f_\beta = f$ when $\beta = 0$.

\begin{definition} \label{def:Galsol}
We define the \emph{$\beta$-asymptotic Galerkin approximation $\hat{\eta}_\beta$ to $\eta$} associated with a finite dimensional
subspace $\mathcal{G}$ of $L^2(\partial K)$ as
\begin{equation} \label{eq:hatrhobeta}
	\hat{\eta}_\beta = \sigma_\beta + \hat{\rho}_\beta \in \sigma_\beta + \mathcal{G}	
\end{equation}
where
$\hat{\rho}_\beta \in \mathcal{G}$ is the unique solution to the Galerkin formulation 
\begin{equation} \label{eq:Galfor}
	\langle \hat{\mu}, \mathcal{R}_k \hat{\rho}_\beta \rangle = \langle \hat{\mu}, f_\beta \rangle,
	\qquad
	\text{for all }
	\hat{\mu} \in \mathcal{G},
\end{equation}
of the integral equation \eqref{eq:inteqnew}.
\end{definition}

In virtue of \eqref{eq:hatrhobeta} and Definition \ref{def:quantities}, we observe that $\eta - \hat{\eta}_\beta = \rho_\beta - \hat{\rho}_\beta$. 
Accordingly, the Galerkin approximation spaces defined in the form
\[
	\mathcal{G} = e^{ik \, \alpha \cdot \gamma} \mathcal{G}^{\rm slow}
\]
capture the oscillations in $\rho_\beta = e^{ik \, \alpha \cdot \gamma} \rho_\beta^{\rm slow}$ exactly. This, in turn, reduces the problem to
the design of approximation spaces $\mathcal{G}^{\rm slow}$ so as to effectively resolve the boundary layers of $\rho_\beta^{\rm slow}$, 
as implied by Theorem~\ref{thm:etamder}, around the shadow boundaries with increasing $k$. In \S\ref{sec:method1} and \S\ref{sec:method2},
we introduce two different Galerkin approximation spaces that are designed to effectively resolve the aforementioned boundary layers, and where
their convergence analyses are also presented.
In particular, these analyses reveal that the explicit
knowledge of $\sigma_\beta$ implies that, provided the stability constant $C_k/c_k$ grows like $k^{\varrho}$ as $k \to \infty$ for some $\varrho>0$,
then it can be controlled by $k^{-\beta/3}$ choosing $\beta > 3\varrho$.

The design of \emph{frequency-adapted $\beta$-asymptotic Galerkin approximation space} in \S\ref{sec:method1}
and the \emph{$\beta$-asymptotic Galerkin approximation space based on frequency dependent changes of variables} in \S\ref{sec:method2},
replicate those proposed for solution of the corresponding Dirichlet problem in \cite{EcevitOzen17} and \cite{EcevitEruslu19} respectively.
However, the convergence analyses have non-trivial technicalities due to the differences between the wavenumber dependent estimates on the
derivatives of the densities (total field for the Neumann problem and normal derivative of total field for the Dirichlet problem). For the sake of
presentation, we refer to \cite{EcevitOzen17,EcevitEruslu19} for additional details on the proofs if needed.

\subsection{Frequency-adapted $\beta$-asymptotic Galerkin boundary element method}
\label{sec:method1}

For the construction of $\beta$-asymptotic frequency-adapted Galerkin approximation spaces,
given $k \ge 1$, a natural number $m$, real numbers $\varepsilon_1,\ldots,\varepsilon_m$ with
$0 < \varepsilon_m < \varepsilon_{m-1} < \cdots < \varepsilon_1 < \frac{1}{3}$, and
positive real numbers $\xi_1, \xi_2, \zeta_1,\zeta_2 $ satisfying
$t_1-\xi_1 < t_2-\xi_2$ and $t_2 + \zeta_2 < 2P+ t_1-\zeta_1$, the
\emph{illuminated region} ($IL$),
\emph{illuminated transitions} ($IT_1$ and $IT_2$),
\emph{shadow transitions} ($ST_1$ and $ST_2$),
\emph{shadow boundaries} ($SB_1$ and $SB_2$),
and \emph{shadow region} ($SR$),
in the parameter domain are defined as
\begin{align*}
	IL & = [t_1 + \xi_1k^{-\frac{1}{3} +\epsilon_1}, t_2 - \xi_2k^{-\frac{1}{3}+\epsilon_1}],
	\\
	IT_1 & = [t_1 + \xi_1 k^{-\frac{1}{3}+\epsilon_{m}}, t_1 + \xi_1 k^{-\frac{1}{3}+\epsilon_1}],
	\\
	IT_2 & = [t_2 - \xi_2 k^{-\frac{1}{3}+\epsilon_{1}}, t_2 - \xi_2 k^{-\frac{1}{3}+\epsilon_{m}}],
	\\
	SB_1 & = [t_1 - \zeta_1 k^{-\frac{1}{3} +\epsilon_m}, t_1 + \xi_1 k^{-\frac{1}{3}+\epsilon_m}],
	\\
	SB_2 & = [t_2 - \xi_1 k^{-\frac{1}{3} +\epsilon_m}, t_2 + \zeta_2 k^{-\frac{1}{3}+\epsilon_m}],
	\\
	ST_1 & = [t_1 - \zeta_1 k^{-\frac{1}{3}+\epsilon_1} , t_1 - \zeta_1 k^{-\frac{1}{3}+\epsilon_{m}}],
	\\
	ST_2 & = [t_2 +  \zeta_2 k^{-\frac{1}{3}+\epsilon_{m}} , t_2 + \zeta_2 k^{-\frac{1}{3}+\epsilon_{1}}],
	\\
	SR & = [t_2 + \zeta_2 k^{-\frac{1}{3}+\epsilon_1}, 2P+t_1-\zeta_1 k^{-\frac{1}{3}+\epsilon_1}].
\end{align*}
Note that as $k \to \infty$ the illuminated and shadow regions cover the entire boundary in
the parameter domain, and the remaining regions collapse to the shadow boundaries.
In order to resolve the singularities of $\rho_\beta^{\rm slow}$ in vicinities of shadow boundaries
as implied by the wavenumber explicit derivative estimates in Theorem~\ref{thm:etamder},
for $m > 1$, we partition each one of the four transition regions into $m-1$ subregions as
\begin{align*}
	IT_1^j & = [t_1 + \xi_1 k^{-\frac{1}{3}+\epsilon_{j+1}}, t_1 + \xi_1 k^{-\frac{1}{3}+\epsilon_j}],
	\\
	IT_2^j & = [t_2 - \xi_2 k^{-\frac{1}{3}+\epsilon_{j}}, t_2 - \xi_2 k^{-\frac{1}{3}+\epsilon_{j+1}}],
	\\
	ST_1^j & = [t_1 - \zeta_1 k^{-\frac{1}{3}+\epsilon_j} , t_1 - \zeta_1 k^{-\frac{1}{3}+\epsilon_{j+1}}],
	\\
	ST_2^j & = [t_2 +  \zeta_2 k^{-\frac{1}{3}+\epsilon_{j+1}} , t_2 + \zeta_2 k^{-\frac{1}{3}+\epsilon_{j}}],
\end{align*}
for  $j=1,\ldots , m-1$. These result in a total of $4m$ regions 
\[
	R_j = [a_j,b_j]
	= \left\{
		\begin{array}{ll}
			IT_1^j, & j = 1,\ldots,m-1,
			\\
			IT_2^{j-m}, & j = m+1,\ldots,2m-1,
			\\
			ST_1^{j-2m}, & j = 2m+1,\ldots,3m-1,
			\\
			ST_2^{j-3m}, & j = 3m+1,\ldots,4m-1,
		\end{array}
	\right.
\]
and
\[
	R_m = IL, \quad
	R_{2m} = SR, \quad
	R_{3m} = SB_1, \quad
	R_{4m} = SB_2,
\]
with the transition regions being redundant when $m=1$.
Identifying the spaces $L^{2} \left( \partial K \right)$ and $L^2 ( \cup_{j=1}^{4m} R_j)$ through the
parameterization $\gamma$, we now define the Galerkin approximation spaces, and the associated asymptotic Galerkin solutions.

\begin{definition}
For $m \in \mathbb{N}$ and $\mathbf{d} = \left(d_1,\ldots,d_{4m} \right) \in \mathbb{Z}_+^{4m}$, the $4m+|\mathbf{d}|$
dimensional \emph{frequency-adapted Galerkin approximation space} in $L^2 ( \partial K)$ is defined as the direct sum
\begin{equation} \label{eq:GalerkinPoly}
	\mathcal{G}_\mathbf{d}
	= e^{i k \, \alpha \cdot \gamma} \,
	\mathcal{G}_\mathbf{d}^{\rm slow}
	= e^{i k \, \alpha \cdot \gamma}
	\bigoplus_{j=1}^{4m} \,
	\charfunc_{R_j} \,
	\Pol_{d_j}
\end{equation}
where $\charfunc_{R}$ is the characteristic function, and $\Pol_{d}$ is the space of polynomials of degree at most $d$.
\end{definition}

\begin{definition}
For $m \in \mathbb{N}$ and $\mathbf{d} = \left(d_1,\ldots,d_{4m} \right) \in \mathbb{Z}_+^{4m}$, the \emph{$\beta$-asymptotic
frequency-adapted Galerkin approximation} $\hat{\eta}_\beta$ to $\eta$ is defined as
\[
	\hat{\eta}_\beta = \sigma_\beta + \hat{\rho}_\beta \in \sigma_\beta + \mathcal{G}_{\mathbf{d}}
\]
where $\sigma_{\beta}$ is as given in \eqref{eq:sigmarhobetaslow}, and
$\hat{\rho}_\beta = e^{i k \, \alpha \cdot \gamma} \hat{\rho}_\beta^{\rm slow} \in \mathcal{G}_\mathbf{d}$ is the unique solution of the Galerkin formulation 
\[
	\langle \hat{\mu}, \mathcal{R}_k \hat{\rho}_\beta \rangle = \langle \hat{\mu}, f_\beta \rangle,
	\qquad
	\text{for all }
	\hat{\mu} \in \mathcal{G}_{\mathbf{d}},
\]
of the integral equation \eqref{eq:inteqnew}.
\end{definition}

The approximation properties of the $\beta$-asymptotic frequency-adapted Galerkin method are given in the following.

\begin{theorem} \label{thm:ecevitozen}
Given $m \in \mathbb{N}$ and $(n_1,\ldots,n_{4m}) \in \mathbb{Z}_+^{4m}$, for $k \ge k_0$ and all
$\mathbf{d} = (d_1,\ldots,d_{4m}) \in \mathbb{N}^{4m}$ with $d_j \ge n_j-1$, we have
\begin{equation} \label{eq:algebraicestimate1}
	\Vert \eta - \hat{\eta}_\beta \Vert_{L^{2}(\partial K)}
	\lesssim
	\dfrac{C_k}{c_k} \, k^{-\frac{\beta}{3}} \,
	\sum_{j = 1}^{4m} \dfrac{1+ E(k,j)}{\left( d_{j} \right)^{n_{j}}}
\end{equation}
for the $\beta$-asymptotic frequency-adapted Galerkin approximation $\hat{\eta}_\beta \in \sigma_\beta + \mathcal{G}_{\mathbf{d}}$ to $\eta$. 
On the transition regions $($with $j' = j \mod m$ and $j' \in \{1,\ldots,m-1\}$$)$
\[
	E(k,j)
	= k^{-\frac{1-3\epsilon_{j'+1}}{6}} ( k^{\frac{\epsilon_{j'}-\epsilon_{j'+1}}{2}} )^{n_{j}},
	\qquad
	j \in \{1,\ldots,4m\} \backslash \{m,2m,3m,4m\},
\]
on the illuminated and shadow regions
\[
	E(k,j) = \delta_{n_j,1} \sqrt{\log k} + H[n_j-2] k^{-\frac{1-3\epsilon_1}{6}} ( k^{\frac{1-3\epsilon_1}{6}})^{n_{j}},
	\qquad
	j=m,2m,
\]
where $\delta$ and $H$ are the Kronecker delta and Heaviside functions,
and on the shadow boundaries
\[
	E(k,j) = k^{-\frac{1}{6}} \left( k^{\epsilon_{m}} \right)^{n_{j}},
	\qquad
	j=3m,4m.
\]
\end{theorem}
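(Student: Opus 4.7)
The plan is to start from C\'{e}a's lemma applied to the Galerkin formulation \eqref{eq:Galfor}. Since $\eta - \hat{\eta}_\beta = \rho_\beta - \hat{\rho}_\beta$ by \eqref{eq:hatrhobeta}, this yields
\[
\|\eta - \hat{\eta}_\beta\|_{L^2(\partial K)}
\lesssim \dfrac{C_k}{c_k}\, \inf_{\hat{\mu} \in \mathcal{G}_{\mathbf{d}}} \|\rho_\beta - \hat{\mu}\|_{L^2(\partial K)}.
\]
Because $|e^{ik \alpha \cdot \gamma(s)}| = 1$, the oscillatory exponential in $\rho_\beta = e^{ik \alpha \cdot \gamma} \rho_\beta^{\rm slow}$ cancels against its counterpart in $\mathcal{G}_{\mathbf{d}} = e^{ik \alpha \cdot \gamma}\mathcal{G}_{\mathbf{d}}^{\rm slow}$, reducing the infimum to a best approximation of $\rho_\beta^{\rm slow}$ in the piecewise polynomial space $\mathcal{G}_{\mathbf{d}}^{\rm slow} = \bigoplus_j \charfunc_{R_j} \Pol_{d_j}$. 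This decouples into $4m$ independent univariate polynomial best-approximation problems, one on each subregion $R_j$.

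On each $R_j = [a_j, b_j]$ of length $h_j$, I would apply a standard Legendre-based estimate of the form
\[
\inf_{p \in \Pol_{d_j}} \|f - p\|_{L^2(R_j)} \lesssim \bigl(h_j/d_j\bigr)^{n_j} \|D_s^{n_j}f\|_{L^2(R_j)}, \qquad d_j \geq n_j - 1,
\]
to $f = \rho_\beta^{\rm slow}$, and insert the pointwise derivative bound from Theorem~\ref{thm:etamder}:
\[
|D_s^{n_j} \rho_\beta^{\rm slow}(s,k)| \lesssim k^{-\beta/3}\, W(s,k)^{-n_j}.
\]
The prefactor $k^{-\beta/3}$ pulls out uniformly over all $R_j$, producing the global $k^{-\beta/3}$ in \eqref{eq:algebraicestimate1}; what remains on each region is a geometric evaluation of $\int_{R_j} W(s,k)^{-2 n_j}\, ds$ combined with the power of $h_j$. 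On the transition regions $W(s,k) \asymp |s - t_i|$ ranges between $k^{-1/3 + \epsilon_{j'+1}}$ and $k^{-1/3 + \epsilon_{j'}}$, and a direct integration delivers the transition form of $E(k,j)$. On the shadow boundary strips $W(s,k) \asymp k^{-1/3}$ uniformly while $h_j \asymp k^{-1/3 + \epsilon_m}$, giving $E(k,j) = k^{-1/6}(k^{\epsilon_m})^{n_j}$. On $IL$ and $SR$ the integrand concentrates near the inner endpoints, where $W \asymp k^{-1/3 + \epsilon_1}$; for $n_j \geq 2$ this yields precisely the Heaviside term in $E(k,m), E(k,2m)$, while for $n_j = 0$ the bound trivially reduces to the $1$ in $1 + E(k,j)$.

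The main technical obstacle will be the $n_j = 1$ case on $IL$ and $SR$, where plugging the pointwise bound $W^{-1} \asymp |s-t_i|^{-1}$ into the Sobolev estimate above produces an excess polynomial factor in $k$ rather than the asserted $\sqrt{\log k}$. To recover the logarithm I would follow the Legendre-expansion argument of \cite{EcevitOzen17}: one estimates the Legendre coefficients of $\rho_\beta^{\rm slow}$ directly by integration by parts using the bound on $D_s\rho_\beta^{\rm slow}$, exploiting the fact that $|s - t_i|^{-1}$ is merely logarithmically, rather than algebraically, singular in $L^2$, so that the resulting tail sum of the Legendre expansion contributes only $\sqrt{\log k}$ to the best $L^2$ approximation error. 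Since the space $\mathcal{G}_{\mathbf{d}}$ and the subregion layout here are identical to those of the sound-soft setting treated in \cite{EcevitOzen17}, and only the derivative bound used as input has changed, the proof is completed by substituting the Neumann estimate from Theorem~\ref{thm:etamder} into that case-by-case bookkeeping and summing the resulting contributions over $j = 1, \ldots, 4m$.
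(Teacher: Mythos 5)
Your overall scaffolding—C\'{e}a's lemma, phase extraction, reduction to $4m$ independent piecewise-polynomial best approximations, substitution of Theorem~\ref{thm:etamder}, and pulling out the global $k^{-\beta/3}$—all matches the paper. The gap is in the polynomial approximation estimate you feed this into. You use the \emph{unweighted} bound $\inf_{p}\|f-p\|_{L^2(R_j)} \lesssim (h_j/d_j)^{n_j}\|D^{n_j}f\|_{L^2(R_j)}$ and only flag $n_j=1$ on $IL/SR$ as problematic. But the paper's Theorem~\ref{thm:pae} (from \cite{Schwab98}) is the Jacobi-weighted estimate $\inf_p\|f-p\|_{L^2} \lesssim d^{-n}\bigl[\int_a^b |D^n f|^2 (s-a)^n(b-s)^n\,ds\bigr]^{1/2}$, and since $(s-a)^n(b-s)^n \le (h/2)^{2n}$, the unweighted version is a strictly weaker corollary; the weight vanishes precisely at the endpoints where $W(s,k)^{-n}$ blows up, and this cancellation is essential on almost every region, not just for $n_j=1$.

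Concretely, on a transition subregion $IT_1^{j'}$ with $a-t_1 \asymp k^{-1/3+\epsilon_{j'+1}} =: \delta$, $b-t_1 \asymp k^{-1/3+\epsilon_{j'}} =: \varepsilon$ and $h \asymp \varepsilon$, the unweighted estimate gives $d^{-n}h^n\|W^{-n}\|_{L^2} \asymp d^{-n}\varepsilon^n\delta^{1/2-n}$ with $k$-exponent $n(\epsilon_{j'}-\epsilon_{j'+1}) - (1-3\epsilon_{j'+1})/6$, an excess of $k^{\frac{n}{2}(\epsilon_{j'}-\epsilon_{j'+1})}$ over the asserted $E(k,j)/d^n$. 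On $IL$ for $n\ge 2$, the unweighted estimate produces $\bigl(k^{(1-3\epsilon_1)/6}\bigr)^{2n-1}$ rather than the asserted $\bigl(k^{(1-3\epsilon_1)/6}\bigr)^{n-1}$, off by $k^{n(1-3\epsilon_1)/6}$. Only the shadow-boundary strips happen to come out right with the crude bound, because there the singularity of $W^{-n}$ sits at the \emph{center} of the strip rather than at an endpoint. So your proposed Legendre-expansion patch (applied only to $n_j=1$ on $IL/SR$) closes one gap but not the rest. The correct tool is Theorem~\ref{thm:pae}; once you replace the unweighted estimate with it, the proof reduces exactly to bounding $\mathcal{W}(k;n;a,b) = \bigl[\int_a^b (s-a)^n(b-s)^n W(s,k)^{-2n}\,ds\bigr]^{1/2}$ on each region, which the paper carries out in Lemma~\ref{lemma:babap} using the exact factorization of $W$ and the closed-form integral of Lemma~\ref{lemma:exactintegral}. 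Everything else in your write-up is sound.
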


\begin{proof}
Writing $\hat{\rho}_\beta = e^{ik \, \alpha \cdot \gamma} \hat{\rho}_\beta^{\rm slow}$ for the unique solution of \eqref{eq:Galfor},
we have
\begin{equation}
	\eta - \hat{\eta}_\beta
	= (\sigma_\beta + \rho_\beta) - (\sigma_\beta + \hat{\rho}_\beta)
	= \rho_\beta - \hat{\rho}_\beta
	= e^{ik \, \alpha \cdot \gamma} \sum_{j=1}^{4m} \charfunc_{R_j} (\rho_\beta^{\rm slow} - \hat{\rho}_{\beta}^{\rm slow}).
\end{equation}
Accordingly, when $\mathcal{G} = \mathcal{G}_{\mathbf{d}}$, using C\'{e}a's lemma, we obtain
\begin{align*}
	\Vert \eta - \hat{\eta}_\beta \Vert
	& = \Vert \sum_{j=1}^{4m} \charfunc_{R_j} (\rho_\beta^{\rm slow} - \hat{\rho}_{\beta}^{\rm slow}) \Vert 
	\\
	& \le \frac{C_k}{c_k} \inf \{ \Vert \sum_{j=1}^{4m} \charfunc_{R_j} (\rho_\beta^{\rm slow} - p_j) \Vert :
	(p_1,\ldots,p_{4m}) \in \mathbb{P}_{d_1} \times \ldots \times \mathbb{P}_{d_{4m}} \}
	\\
	& \le \frac{C_k}{c_k} \sum_{j=1}^{4m} \inf_{p_j \in \mathbb{P}_{d_j}} \Vert \rho_\beta^{\rm slow} - p_j \Vert_{L^2([a_j,b_j])}.
\end{align*}
Therefore, by Theorem~\ref{thm:etamder} above and Theorem \ref{thm:pae} in Appendix~\ref{sec:auxiliary}, we have
\begin{align*}
	\Vert \eta - \hat{\eta}_\beta \Vert
	& \lesssim \frac{C_k}{c_k} k^{-\frac{\beta}{3}}
	\sum_{j=1}^{4m} \frac{\mathcal{W}(k;n_j;a_j,b_j)}{(d_j)^{n_j}}
\end{align*}
for all positive integers $d_j \ge n_j-1$ ($j = 1,\ldots,4m$) where
\begin{equation} \label{eq:mathcalW}
	 \mathcal{W}(k;n;a,b) = \left[ \int_a^b \frac{(s-a)^{n} \, (b-s)^{n}}{W(s,k)^{2n}} \, ds \right]^{\frac{1}{2}}.
\end{equation}
This inequality when combined with the next lemma gives the desired result.
\end{proof}

\begin{lemma} \label{lemma:babap}
For all $n \in \mathbb{Z}_+$ and all $k \ge 1$, we have:

\begin{itemize}
\item[(i)] Illuminated and shadow regions:
If $0 < \epsilon < \frac{1}{3}$, $a = t_{1} + \xi_{1} k^{-\frac{1}{3} + \epsilon}$ and $b = t_{2} - \xi_{2} k^{-\frac{1}{3} + \epsilon}$, or
$a = t_{2} + \zeta_{2} k^{-\frac{1}{3} + \epsilon}$ and $b = 2P+ t_{1} - \zeta_{1} k^{-\frac{1}{3}+\epsilon}$, then
\begin{equation} \label{eq:ISest}
	\mathcal{W}(k;n;a,b) 
	\lesssim 1 + \delta_{n,1} \sqrt{\log k} + H[n-2] k^{\frac{1 - 3 \epsilon}{6}(n-1)}.
\end{equation}
\item[(ii)] Illuminated and shadow transitions:
If $0 < \delta < \epsilon < \frac{1}{3}$,
$a = t_{1} + \xi_1 k^{-\frac{1}{3} + \delta}$ and $b = t_{1} + \xi_1 k^{-\frac{1}{3} + \epsilon}$,
or $a = t_{1} - \zeta_1 k^{-\frac{1}{3} + \epsilon}$ and $b = t_{1} - \zeta_1 k^{-\frac{1}{3} + \delta}$,
or $a = t_{2} + \zeta_2 k^{-\frac{1}{3} + \delta}$ and $b = t_{2} + \zeta_2 k^{-\frac{1}{3} + \epsilon}$,
or $a = t_{2} - \xi_2 k^{-\frac{1}{3} + \epsilon}$ and $b = t_{2} - \xi_2 k^{-\frac{1}{3} + \delta}$,
then
\begin{equation} \label{eq:ISTest}
	\mathcal{W}(k;n;a,b) 
	\lesssim 1 + k^{-\frac{1-3\delta}{6}} k^{\frac{\epsilon-\delta}{2} n}.
\end{equation}
\item[(iii)] Shadow boundaries:
If $0 \le \epsilon, \delta < \frac{1}{3}$, $a = t_{1} - \zeta_1 k^{-\frac{1}{3} + \delta}$ and $b = t_{1} + \xi_1 k^{-\frac{1}{3} + \epsilon}$,
or $a = t_{2} - \xi_1 k^{-\frac{1}{3} + \epsilon}$ and $b = t_{2} + \zeta_2 k^{-\frac{1}{3} + \delta}$,
then
\begin{equation} \label{eq:SBest}
	\mathcal{W}(k;n;a,b) 
	\lesssim 1 + k^{-\frac{1}{6}} k^{\frac{\epsilon+\delta}{2} n}.
\end{equation}
\end{itemize}
\end{lemma}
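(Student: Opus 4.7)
The plan is to reduce each $\mathcal{W}(k;n;a,b)^2=\int_a^b(s-a)^n(b-s)^n W(s,k)^{-2n}\,ds$ to an elementary integral by substituting around the nearest shadow boundary and then bounding region by region. In cases (i) and (ii), the interval $[a,b]$ lies at distance at least of order $k^{-1/3+\delta}$ or $k^{-1/3+\epsilon}$ from the shadow boundaries, so $|\omega(s)|$ dominates $k^{-1/3}$ and $W(s,k)\asymp|\omega(s)|$; only in case (iii) does the $k^{-1/3}$ plateau of $W$ genuinely enter.

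For case (i), the pointwise inequalities $(s-a)\le s-t_1$ and $(b-s)\le t_2-s$ yield $(s-a)(b-s)\le\omega(s)$, reducing the integrand to $\omega(s)^{-n}$. Substituting $u=s-t_1$ and using symmetry to work on $[\xi_1 k^{-1/3+\epsilon},L/2]$, with $L=t_2-t_1$, the remaining integral $\int u^{-n}(L-u)^{-n}\,du$ is $O(1)$ for $n=0$, $O(\log k)$ for $n=1$, and $O(k^{(1-3\epsilon)(n-1)/3})$ for $n\ge 2$; taking a square root gives \eqref{eq:ISest}.

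For case (ii), $W(s,k)\asymp L|s-t_j|$ where $t_j$ is the adjacent shadow boundary. Setting $u=s-t_j$ and then $v=u/\alpha$ with $\alpha=\xi k^{-1/3+\delta}$ transforms the integral into $\alpha L^{-2n}\int_1^T(v-1)^n(T-v)^n v^{-2n}\,dv$ with $T=k^{\epsilon-\delta}$; splitting $[1,T]$ at $T/2$ and using $(v-1)^n\le v^n$, $(T-v)^n\le T^n$ shows this elementary integral is $O(T^n)$ for $n\ge 2$ (and $O(T\log T)$, $O(T)$ for $n=1,0$). Combining powers yields \eqref{eq:ISTest}.

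For case (iii), the substitution $u=s-t_j$, $v=k^{1/3}u$ converts the integral into
\[
	k^{-1/3}\int_{-\zeta k^{\delta}}^{\xi k^{\epsilon}}\frac{(v+\zeta k^{\delta})^n(\xi k^{\epsilon}-v)^n}{(1+c|v|)^{2n}}\,dv
\]
with $c\asymp L$. I would split the domain into $|v|\le 1$ (where $W\asymp k^{-1/3}$) and $|v|\ge 1$ (where $W\asymp |u|$), and use the elementary inequality $(v+\zeta k^{\delta})(\xi k^{\epsilon}-v)\le(|v|+\zeta k^{\delta})(|v|+\xi k^{\epsilon})$ to keep the two widths $\zeta k^{\delta}$ and $\xi k^{\epsilon}$ separate. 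On $|v|\le 1$ the numerator is $\lesssim k^{n(\delta+\epsilon)}$ over a range of length $O(1)$, producing the dominant $k^{-1/3+n(\delta+\epsilon)}$ contribution; the outer subregions give the same or smaller orders after integration, and the square root yields \eqref{eq:SBest}. The principal difficulty lies precisely in this case: an incautious AM-GM bound on $(v+\zeta k^{\delta})(\xi k^{\epsilon}-v)$ would collapse the two widths to $\max(\zeta k^{\delta},\xi k^{\epsilon})^2$, replacing $k^{(\epsilon+\delta)n/2}$ by the weaker $k^{\max(\epsilon,\delta)n}$, so maintaining the separate-width bound throughout the splitting is essential. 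The additive ``$1+$'' in each stated bound absorbs the regimes where the main term drops below unity, which happens for small $n$ under the hypothesis $\epsilon,\delta<1/3$.
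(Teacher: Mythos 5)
Your proposal is correct but follows a genuinely different route from the paper. The paper factorizes $W(s,k)$ exactly as a product $(s-c_I)(d_I-s)$ (respectively $(c_S-s)(d_S-s)$), with $c_I,d_I,c_S,d_S$ $k$-dependent roots, and then invokes the closed-form evaluation of $\int(s-a)^n(b-s)^n/[(s-c)^m(d-s)^m]\,ds$ from Lemma~\ref{lemma:exactintegral}; the bound then comes from tracking the asymptotic sizes of the constituents $a-c_I$, $b-c_I$, $d_I-a$, $d_I-b$, etc., in the resulting finite sum. You instead never touch the exact antiderivative: you observe that $W\asymp|\omega|$ on the integration domain in cases (i)--(ii) and that the plateau $W\asymp k^{-1/3}$ only matters in case (iii), then use pointwise inequalities $(s-a)(b-s)\le\omega(s)$ and scaling substitutions ($v=u/\alpha$ in (ii), $v=k^{1/3}u$ in (iii)) to reduce each integral to a one-parameter elementary integral whose growth in $k$ is transparent. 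Your approach is more self-contained (no appeal to the closed-form lemma) and makes the source of each power of $k$ visibly local (the dominant $|v|\le1$ region in (iii), for instance), while the paper's approach is mechanical once Lemma~\ref{lemma:exactintegral} is in hand and has the exactness advantage. Your remark about the AM-GM pitfall in case (iii) is apt: collapsing $(v+\zeta k^{\delta})(\xi k^{\epsilon}-v)$ to $\max(k^\delta,k^\epsilon)^2$ would replace $k^{(\epsilon+\delta)n/2}$ by the strictly weaker $k^{\max(\epsilon,\delta)n}$, and the paper implicitly avoids this by keeping the factors $(c_S-a)^p(c_S-b)^q$ separate in its expansion. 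One small tidiness point: in your case (i) the interval $[a,b]$ need not be symmetric about $P$ since $\xi_1\neq\xi_2$ in general, so ``using symmetry'' should be replaced by splitting at $P$ and treating each half identically; this is cosmetic and does not affect the bounds.
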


\begin{proof}
In each of the three cases, the analyses leading into the given estimates are similar for each of the given pairs of parameters $a$
and $b$, so we present the proof for only the very first pairs.

In any case, we have
\begin{equation} \label{eq:intest0}
	W(k;0;a,b) = (b-a)^{\frac{1}{2}} \lesssim 1,
\end{equation}
so we assume $n \ge 1$. When $1 \le k \le k_0$ for some $k_0 >1$, we clearly have
\begin{equation} \label{eq:intestk0}
	\mathcal{W}(k;n;a,b) \lesssim 1,
\end{equation}
and therefore we can assume that $k$ is sufficiently large. In this case, with $T = \frac{t_2 - t_1}{2}$, we have
\begin{equation} \label{eq:Wfactor}
	W(s,k) =
	\left\{
		\begin{array}{rl}
			   (s-c_{I}) \, (d_{I} - s), & s \in [t_{1},t_{2}],
			   \\
			   (c_{S}-s) \, (d_{S} - s), & s \in [0,2P] \backslash [t_{1},t_{2}],
		\end{array}
	\right.
\end{equation}
where
\begin{align*}
	c_{I} = P - \sqrt{T^{2} + k^{-\frac{1}{3}}},
	\ \,
	d_{I} = P + \sqrt{T^{2} + k^{-\frac{1}{3}}},
	\ \,
	c_{S} = P - \sqrt{T^{2} - k^{-\frac{1}{3}}},
	\ \,
	d_{S} = P + \sqrt{T^{2} - k^{-\frac{1}{3}}}.
\end{align*}

For (i) and (ii), we use \eqref{eq:Wfactor} in Lemma~\ref{lemma:exactintegral} in Appendix~\ref{sec:auxiliary} to obtain
\begin{equation} \label{eq:Wexact-i-ii}
	\mathcal{W}(k;n;a,b)^2
	= \sum_{\substack{0 \le p,q \le n \\ 1 \le j \le 2n}}
	\binom{4n-j-1}{2n-j} \binom{n}{p} \binom{n}{q}
	\dfrac{(-1)^{n} \, \mathcal{F}(a,b;a,b;c_{I},d_{I};n,p,q;j)}{(d_{I}-c_{I})^{4n-j}}
\end{equation}
where for $2n-(p+q+j) = -1$
\begin{align*}
	\mathcal{F}(a,b;a,b;c_{I},d_{I};n,p,q;j)
	= \left( c_I-a \right)^{p} \left( c_I-b \right)^{q} \log \left( \dfrac{b-c_I}{a-c_I} \right)
	+ \left( a-d_I \right)^{p} \left( b-d_I \right)^{q} \log \left( \dfrac{d_I-a}{d_I-b} \right),
\end{align*}
and for $2n-(p+q+j) \ne -1$
\begin{align*}
	\mathcal{F}(a,b;a,b;c_{I},d_{I};n,p,q;j)
	& = \dfrac{\left( c_I-a \right)^{p} \left( c_I-b \right)^{q}}{2n-(p+q+j)+1}
	\left[ \left( b-c_I \right)^{2n-(p+q+j)+1} - \left( a-c_I \right)^{2n-(p+q+j)+1} \right]
	\\
	& + \dfrac{\left( a-d_I \right)^{p} \left( b-d_I \right)^{q}}{2n-(p+q+j)+1}
	\left[ \left( d_I-a \right)^{2n-(p+q+j)+1} - \left( d_I-b \right)^{2n-(p+q+j)+1} \right].
\end{align*}

For (i), as $k \to \infty$, we have 
$a - c_I \asymp k^{-\frac{1}{3} + \epsilon}$,
$b-c_I \asymp 1$, 
$d_I-a \asymp 1$,
and $d_I-b \asymp k^{-\frac{1}{3} + \epsilon}$. Accordingly, 
for $0 \le p,q \le n$ and $1 \le j \le 2n$, we get
\begin{align*}
	\left| \mathcal{F}(a,b;a,b;c_{I},d_{I};n,p,q;j) \right|\
	\lesssim 1+ (k^{-\frac{1}{3} + \epsilon})^p \, \log k +(k^{-\frac{1}{3} + \epsilon})^q \, \log k 
	\lesssim 1 + \log k
\end{align*}
for $2n-(p+q+j) = -1$, and
\begin{align*}
	\left| \mathcal{F}(a,b;a,b;c_{I},d_{I};n,p,q;j) \right|
	\lesssim 1+ (k^{-\frac{1}{3} + \epsilon})^{2n-(q+j)+1} + (k^{-\frac{1}{3} + \epsilon})^{2n-(p+j)+1} 
	\lesssim 1+ (k^{\frac{1}{3}-\epsilon})^{n-1}
\end{align*}
for $2n-(p+q+j)\ne -1$. Using these estimates in \eqref{eq:Wexact-i-ii}, and upon noting that $d_{I} - c_{I} \asymp 1$ as $k \to \infty$,
we obtain
\[
	\mathcal{W}(k;n;a,b)^2
	\lesssim 1 + \delta_{n,1} \, \log k + H[n-2] \, (k^{\frac{1}{3}-\epsilon})^{n-1},
\]
and therefore \eqref{eq:ISest} follows.

For (ii), as $k \to \infty$, we have
$a - c_I \asymp k^{-\frac{1}{3} + \delta}$,
$b - c_I \asymp k^{-\frac{1}{3} + \epsilon}$,
$d_I - a \asymp 1$,
and $d_I - b \asymp 1$
so that, for $0 \le p,q \le n$ and $1 \le j \le 2n$, we get
\begin{align*}
	\left| \mathcal{F}(a,b;a,b;c_{I},d_{I};n,p,q;j) \right|\
	\lesssim 1+ (k^{-\frac{1}{3} + \delta})^p (k^{-\frac{1}{3} + \epsilon})^q \log k
	\lesssim 1
\end{align*}
when $2n-(p+q+j) = -1$, and
\begin{align*}
	\left| \mathcal{F}(a,b;a,b;c_{I},d_{I};n,p,q;j) \right|
	& \lesssim 1+ (k^{-\frac{1}{3} + \delta})^{p} \, (k^{-\frac{1}{3} + \epsilon})^{q}
	[(k^{-\frac{1}{3} + \delta})^{2n-(p+q+j)+1} + (k^{-\frac{1}{3} + \epsilon})^{2n-(p+q+j)+1}]
	\\
	& \lesssim 1+ (k^{-\frac{1}{3} + \delta})^{2n-j+1} \, (k^{\epsilon-\delta})^{q} + (k^{-\frac{1}{3} + \epsilon})^{2n-j+1} \, (k^{\delta-\epsilon})^{q}
	\\
	& \lesssim 1+ k^{-\frac{1}{3} + \delta} \, (k^{\epsilon-\delta})^{n} + k^{-\frac{1}{3} + \epsilon}
	\\
	& \lesssim 1+ k^{-\frac{1}{3} + \delta} \, (k^{\epsilon-\delta})^{n}
\end{align*}
when $2n-(p+q+j)\ne -1$. Using these two estimates in \eqref{eq:Wexact-i-ii}  and recalling $d_{I} - c_{I} \asymp 1$ as $k \to \infty$,
we therefore obtain
\[
	\mathcal{W}(k;n;a,b)^2
	\lesssim 1 + H[n-2] \, k^{-\frac{1}{3} + \delta} \, (k^{\epsilon-\delta})^{n}
\]
from which \eqref{eq:ISTest} follows.

As for (iii), Lemma~\ref{lemma:exactintegral} in Appendix~\ref{sec:auxiliary} entails
\begin{multline} \label{eq:W-iii}
	\mathcal{W}(k;n;a,b)^2
	= \sum_{\substack{0 \le p,q \le n \\ 1 \le j \le 2n}}
	\binom{4n-j-1}{2n-j} \binom{n}{p}\binom{n}{q} (-1)^{n}
	\\
	\times
	\left\{
		\dfrac{\mathcal{F}(a,t_{1};a,b;c_{S},d_{S};n,p,q;j)}{(d_{S}-c_{S})^{4n-j}}
		+ \dfrac{\mathcal{F}(t_{1},b;a,b;c_{I},d_{I};n,p,q;j)}{(d_{I}-c_{I})^{4n-j}}
	\right\},
\end{multline}
and we need to estimate $\mathcal{F}(a,t_{1};a,b;c_{S},d_{S};n,p,q,j)$
and $\mathcal{F}(t_{1},b;a,b;c_{I},d_{I};n,p,q;j)$. Considering the former, we have
\begin{align*}
	\mathcal{F}(a,t_{1};a,b;c_{S},d_{S};n,p,q,j)
	= ( c_S-a)^{p}(c_S-b)^{q} \log \big( \dfrac{t_1-c_S}{a-c_S} \big)
	+ ( a-d_S)^{p}(b-d_S)^{q} \log \big( \dfrac{d_S-a}{d_S-t_1} \big)
\end{align*}
for $2n-(p+q+j) = -1$, and 
\begin{align*}
	\mathcal{F}(a,t_{1};a,b;c_{S},d_{S};n,p,q,j)
	& = \dfrac{(c_S-a)^{p} (c_S-b)^{q}}{2n-(p+q+j)+1}
	\left[ (t_1-c_S)^{2n-(p+q+j)+1} - (a-c_S)^{2n-(p+q+j)+1} \right]
	\\
	& + \dfrac{(a-d_S)^{p}(b-d_S)^{q}}{2n-(p+q+j)+1}
	\left[ (d_S-a)^{2n-(p+q+j)+1} - (d_S-t_1)^{2n-(p+q+j)+1} \right]
\end{align*}
for $2n-(p+q+j) \ne -1$. Since
$c_{S}-a \asymp k^{-\frac{1}{3} + \delta}$, $|c_{S}-b| \lesssim k^{-\frac{1}{3} + \epsilon}$, $c_{S}-t_1 \asymp k^{-\frac{1}{3}}$, 
$d_{S}-a \asymp 1$, $d_{S}-b \asymp 1$, $d_{S}-t_1 \asymp 1$, and
$d_{S} - c_{S} \asymp 1$ so that, for $0 \le p,q \le n$ and $1 \le j \le 2n$, we get
\begin{align*}
	\mathcal{F}(a,t_{1};a,b;c_{S},d_{S};n,p,q,j)
	\lesssim 1 + (k^{-\frac{1}{3} + \delta})^p (k^{-\frac{1}{3} + \epsilon})^q \log k
	\lesssim 1
\end{align*}
for $2n-(p+q+j) = -1$, and
\begin{align*}
	\mathcal{F}(a,t_{1};a,b;c_{S},d_{S};n,p,q,j)
	& \lesssim 1 + (k^{-\frac{1}{3} + \delta})^p (k^{-\frac{1}{3} + \epsilon})^q
	\left[ (k^{-\frac{1}{3}})^{2n-(p+q+j)+1} + (k^{-\frac{1}{3} + \delta})^{2n-(p+q+j)+1} \right]
	\\ 
	& \lesssim 1+ (k^{-\frac{1}{3}})^{2n-j+1} (k^{\delta})^p (k^{\epsilon})^q
	+ (k^{-\frac{1}{3} + \delta})^{2n-j+1} (k^{\epsilon-\delta})^q
	\\ 
	& \lesssim 1+ k^{-\frac{1}{3}} \, (k^{\epsilon+ \delta})^n
	+ k^{-\frac{1}{3} + \delta} \, (k^{\epsilon-\delta})^n
	\\ 
	& \lesssim 1+ k^{-\frac{1}{3}} \, (k^{\epsilon+ \delta})^n
\end{align*}
for $2n-(p+q+j) \ne -1$. The same estimates hold also for $\mathcal{F}(t_{1},b;a,b;c_{I},d_{I};n,p,q;j)$.
Accordingly \eqref{eq:W-iii} implies
\[
	\mathcal{W}(k;n;a,b)^2 \lesssim 1+ k^{-\frac{1}{3}} \, (k^{\epsilon+ \delta})^n,
\]
and this yields \eqref{eq:SBest}. 
\end{proof}

In Theorem \ref{thm:ecevitozen}, for a given $n \in \mathbb{Z}_+$, taking $n_1 = \ldots = n_{4m} = n$ and setting $d_1 = \ldots = d_{4m} = d$ for any
positive integer $d \ge  n-1$, we see that in order to balance the errors in all the $4m$ regions uniformly for all $n$ (cf. \eqref{eq:algebraicestimate1}), we must have
\[
	\dfrac{1-3\epsilon_{1}}{6}
	= \epsilon_{m}
	= \dfrac{\epsilon_{j}-\epsilon_{j+1}}{2},
	\qquad
	 j = 1,\ldots,m-1.
\]
This system of equations can be explicitly solved to yield the following.

\begin{corollary} \label{corollary:algebraic1}
Given $n \in \mathbb{Z}_+$ and $m \in \mathbb{N}$, if $\epsilon_j$ are chosen as
\begin{equation}\label{eq:epsilons}
	\epsilon_{j} = \dfrac{1}{3} \, \dfrac{2m-2j+1}{2m+1},
	\qquad
	 j = 1,\ldots,m, 
\end{equation}
then, for all $k \ge k_0$ and $\mathbf{d} = (d,\ldots,d) \in \mathbb{N}^{4m}$ with $d \ge n-1$, we have
\begin{equation} \label{eq:est1}
	\Vert \eta - \hat{\eta}_\beta \Vert_{L^{2}(\partial K)}
	\lesssim \dfrac{C_k}{c_k}\,k^{-\frac{\beta}{3}} \, m\, \dfrac{1+ \delta_{n,1}\sqrt{\log k} + H[n-2] \, ( k^{\frac{1}{6m+3}})^{n-1}}{d^{n}}
\end{equation}
for the $\beta$-asymptotic frequency-adapted Galerkin approximation $\hat{\eta}_\beta \in \sigma_\beta + \mathcal{G}_{\mathbf{d}}$ to $\eta$.
\end{corollary}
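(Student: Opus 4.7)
The plan is to specialize Theorem~\ref{thm:ecevitozen} to the uniform choices $n_1=\cdots=n_{4m}=n$ and $d_1=\cdots=d_{4m}=d$, and then solve for the $\epsilon_j$ that balance the three families of error exponents appearing in $E(k,j)$. Concretely, I would equate the transition exponent $\frac{\epsilon_{j'}-\epsilon_{j'+1}}{2}$, the illuminated/shadow exponent $\frac{1-3\epsilon_1}{6}$, and the shadow boundary exponent $\epsilon_m$, obtaining the system
\[
	\epsilon_{j}-\epsilon_{j+1}=2\epsilon_m \quad (j=1,\ldots,m-1),
	\qquad
	1-3\epsilon_1=6\epsilon_m.
\]
Telescoping gives $\epsilon_1=(2m-1)\epsilon_m$, which when substituted into the second relation yields $\epsilon_m=\frac{1}{3(2m+1)}$ and hence $\epsilon_j=\frac{1}{3}\cdot\frac{2m-2j+1}{2m+1}$. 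A quick check confirms $0<\epsilon_m<\epsilon_{m-1}<\cdots<\epsilon_1<\frac{1}{3}$ so that the construction in \S\ref{sec:method1} is admissible.

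Next, I would substitute these values back into the three expressions for $E(k,j)$. For the illuminated and shadow regions, the exponent becomes $\frac{1-3\epsilon_1}{6}=\frac{1}{3(2m+1)}$, so the contribution is exactly $\delta_{n,1}\sqrt{\log k}+H[n-2]\,k^{(n-1)/(3(2m+1))}$. For a transition subregion indexed by $j'\in\{1,\ldots,m-1\}$, a direct computation gives $1-3\epsilon_{j'+1}=\frac{2(j'+1)}{2m+1}$, so $E(k,j)=k^{-(j'+1)/(3(2m+1))}\cdot k^{n/(3(2m+1))}=k^{(n-j'-1)/(3(2m+1))}$, which is maximized at $j'=1$ with value $k^{(n-2)/(3(2m+1))}$. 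For the shadow boundaries, $E(k,j)=k^{-1/6}\,k^{n/(3(2m+1))}=k^{(2n-2m-1)/(6(2m+1))}$.

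A brief comparison of exponents shows the illuminated/shadow contribution dominates: $(n-1)/(3(2m+1))>(n-2)/(3(2m+1))$ is trivial, and $(2n-2)/(6(2m+1))>(2n-2m-1)/(6(2m+1))$ holds whenever $m\ge 1$. Consequently, $\max_j (1+E(k,j))\lesssim 1+\delta_{n,1}\sqrt{\log k}+H[n-2]\,k^{(n-1)/(3(2m+1))}$, and summing the estimate of Theorem~\ref{thm:ecevitozen} over the $4m$ regions produces exactly the multiplicative factor $m$ in front, yielding \eqref{eq:est1}.

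There is essentially no genuine obstacle: the argument is purely algebraic bookkeeping once the balance system is identified, and all needed analytic content is already packaged in Theorem~\ref{thm:ecevitozen}. The only step requiring a small amount of care is the dominance comparison between the three families of exponents, which must be verified uniformly in $n\ge 2$ and $m\ge 1$ to justify collapsing the sum over $j$ into the single bound quoted in the corollary.
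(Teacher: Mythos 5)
Your proposal is correct and follows exactly the route the paper indicates: set up the balance system $\frac{1-3\epsilon_1}{6}=\epsilon_m=\frac{\epsilon_j-\epsilon_{j+1}}{2}$, solve it to get $\epsilon_j=\frac{1}{3}\frac{2m-2j+1}{2m+1}$, and substitute back into Theorem~\ref{thm:ecevitozen}. The paper leaves the verification as "can be explicitly solved," and your algebra (including the dominance check showing the illuminated/shadow exponent $\frac{n-1}{3(2m+1)}$ majorizes the transition and shadow-boundary exponents for all $m\ge 1$, $n\ge 2$) correctly fills in those details.
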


To our knowledge, explicit analytical representations of the terms $a_{p,q,r,\ell}$ in the ansatz \eqref{eq:MT85Neumann} are not available and this corresponds to $\beta=0$.
However, when the number of subregions $m$ is chosen to increase proportional to
$\log k^{\frac{1}{6}}$, we observe that $k^{\frac{1}{6m+3}}$ is bounded and therefore \eqref{eq:est1} implies
\begin{equation} \label{eq:explainme} 
	\Vert \eta - \hat{\eta}_\beta \Vert_{L^{2}(\partial K)}
	\lesssim \, \dfrac{C_k}{c_k} \, \log k \dfrac{1+ \delta_{n,1}\sqrt{\log k}}{d^{n}}.
\end{equation}
Moreover, as $k \to \infty$, if the stability constant $\frac{C_k}{c_k}$ grows proportional to $k^{\varrho_1}$  for some $\varrho_1 > 0$, and 
$d$ is chosen to grow as $k^{\varrho_2}$ for some $\varrho_2 >0$, then
\[
	k^{\varrho_1} \, \frac{(\log k)^{\frac{3}{2}}}{k^{n\varrho_2}} \lesssim 1
\]
for all sufficiently large $n$. Since \eqref{eq:explainme} is valid for all $n$, we therefore deduce that the convergence of the method is spectral and requires
an increase of only $\mathcal{O}(k^{\epsilon})$ (for any $\epsilon >0$) in the total number of degrees of freedom to maintain accuracy for higher values of $k$.

One of the most important aspects of this method consists of incorporating sufficiently many terms $a_{p,q,r,\ell}$ in the ansatz \eqref{eq:MT85Neumann}
into the integral equation
in order to obtain a frequency independent method. Indeed, when $\beta \in \mathbb{Z}_+$ is chosen so that
$\frac{C_k}{c_k} \, k^{-\frac{\beta}{3}} (\log k)^{\frac{3}{2}} \lesssim 1$ as $k \to \infty$, then
\[
	\Vert \eta - \hat{\eta}_\beta \Vert_{L^{2}(\partial K)}
	\lesssim \dfrac{1}{d^{n}}.
\]
This shows that the method is not only spectral but also independent of frequency in the sense that prescribed accuracies can be attained with
the utilization of fixed numbers of degrees of freedom.
   
\subsection{$\beta$-asymptotic Galerkin boundary element method based on frequency dependent changes of variables}
\label{sec:method2}

For the construction of $\beta$-asymptotic Galerkin approximation spaces based on frequency dependent changes of variables,
given positive constants $\xi_j, \xi_j', \zeta_j,\zeta_j'$, $j=1,2$, satisfying
\begin{align*}
	t_1 + \xi_1
	\le t_1 + \xi_1'
	& = t_2 - \xi_2'
	\le t_2 - \xi_2,
	\\
	t_2 + \zeta_2
	\le t_2 + \zeta_2'
	& = 2P + t_1 - \zeta_1'
	\le 2P + t_1 - \zeta_1,
\end{align*}
we define, for any wavenumber $k>1$, the \emph{illuminated transition regions} as
\begin{align*}
	\mathcal{I}_1 = [a_1,b_1] = [t_1 + \xi_1 k^{-\frac{1}{3}}, t_1 + \xi_1'],
	\qquad
	\mathcal{I}_2 = [a_2,b_2] = [t_2 - \xi_2' , t_2 - \xi_2 k^{-\frac{1}{3}}],
\end{align*}
\emph{shadow transition regions} as
\begin{align*}
	\mathcal{I}_3 = [a_3,b_3] = [t_1 - \zeta_1', t_1 - \zeta_1 k^{-\frac{1}{3}} ],
	\qquad
	\mathcal{I}_4 = [a_4,b_4] = [t_2 + \zeta_2 k^{-\frac{1}{3}}, t_2 + \zeta_2'],
\end{align*}
and the \emph{shadow boundary regions} as
\begin{align*}
	\mathcal{I}_5 = [a_5,b_5] = [t_1 - \zeta_1 k^{-\frac{1}{3}}, t_1 + \xi_1 k^{-\frac{1}{3}}],
	\qquad
	\mathcal{I}_6 = [a_6,b_6] = [t_2 - \xi_2 k^{-\frac{1}{3}}, t_2 + \zeta_2 k^{-\frac{1}{3}} ].
\end{align*}
In what follows we identifty $L^{2}\left( \partial K \right)$ and $L^2 ( \cup_{j=1}^{6} \mathcal{I}_j )$ through the
parametrization $\gamma$. 

In order to capture the boundary layers of $\rho_\beta$ in the transition regions as implied by Theorem~\ref{thm:etamder},
we introduce the frequency dependent changes of variables $\phi_j : \mathcal{I}_j \to \mathcal{I}_j$ by setting
\begin{align*}
	\phi_1(s) & = t_1 + \varphi_1 \left( s \right) k^{\psi_1 \left( s \right)},
	\qquad
	\phi_2(s) = t_2 - \varphi_2 \left( s \right) k^{\psi_2 \left( s \right)},
	\\
	\phi_3(s) & = t_1 - \varphi_3 \left( s \right) k^{\psi_3 \left( s \right)},
	\qquad	
	\phi_4(s) = t_2 + \varphi_4 \left( s \right) k^{\psi_4 \left( s \right)},
	\\
	\phi_5(s) & = s,
	\hspace{3.33cm}
	\phi_6(s) = s.
\end{align*}
Here $\psi_j$ are constructed so as to linearly 
increase from $-\frac{1}{3}$ to $0$ as one moves away from the shadow boundaries, and
$\varphi_j$ are linear functions chosen to ensure that the maps $\phi_j : \mathcal{I}_j \to \mathcal{I}_j$
are bijective. They are defined explicitly as
\begin{align*}
	\psi_1(s) = -\dfrac{1}{3} \dfrac{b_1-s}{b_1-a_1},	
	\qquad
	\varphi_1(s) & = \xi_1 + \left( \xi_1'-\xi_1 \right) \dfrac{s-a_1}{b_1-a_1},
	\\
	\psi_2(s) = -\dfrac{1}{3} \dfrac{s-a_2}{b_2-a_2},
	\qquad
	\varphi_2(s) &= \xi_2' + \left( \xi_2-\xi_2' \right) \dfrac{s-a_2}{b_2-a_2},
	\\
	\psi_3(s) = -\dfrac{1}{3} \dfrac{s-a_3}{b_3-a_3},	
	\qquad
	\varphi_3(s) & = \zeta_1' + \left( \zeta_1-\zeta_1' \right) \dfrac{s-a_3}{b_3-a_3},
	\\
	\psi_4(s) = -\dfrac{1}{3} \dfrac{b_4-s}{b_4-a_4},
	\qquad
	\varphi_4(s) &= \zeta_2 + \left( \zeta_2'-\zeta_2 \right) \dfrac{s-a_4}{b_4-a_4}.
\end{align*}

With these definitions, we are now ready to introduce the Galerkin approximation spaces and the associated asymptotic solutions.

\begin{definition}
For $\mathbf{d} = \left( d_1, \ldots, d_6 \right) \in \mathbb{Z}_+^{6}$, the \emph{Galerkin approximation space based on 
frequency dependent changes of variables} of dimension $6 + |\mathbf{d}|$ in $L^2(\partial K)$ is defined as
\begin{equation} \label{eq:GalASCV}
	\mathcal{C}_{\mathbf{d}}
	= e^{i k \, \alpha \cdot \gamma} \, \mathcal{C}_{\mathbf{d}}^{\rm slow}
	= e^{i k \, \alpha \cdot \gamma} \bigoplus_{j=1}^{6}
	\charfunc_{\mathcal{I}_j} \ \Pol_{d_j} \circ \phi_j^{-1}.
\end{equation}
\end{definition}

\begin{definition}
Given $\beta \in \mathbb{Z}_+$, the \emph{$\beta$-asymptotic Galerkin approximation $\hat{\eta}_\beta$ to $\eta$ based on
frequency dependent changes of variables} is defined as
\begin{equation} \label{eq:defineetabeta}
	\hat{\eta}_\beta = \sigma_\beta + \hat{\rho}_\beta \in \sigma_\beta + \mathcal{C}_{\mathbf{d}}
\end{equation}
where $\hat{\rho}_\beta = e^{i k \, \alpha \cdot \gamma} \hat{\rho}_\beta^{\rm slow} \in \mathcal{C}_\mathbf{d}$
is the unique solution to the Galerkin formulation
\[
	\langle \hat{\mu}, \mathcal{R}_k \hat{\rho}_\beta \rangle = \langle \hat{\mu}, f_\beta \rangle,
	\qquad
	\text{for all }
	\hat{\mu} \in \mathcal{C}_{\mathbf{d}},
\]
of the integral equation \eqref{eq:inteqnew}.
\end{definition}

The convergence properties of the $\beta$-asymptotic Galerkin approximations $\hat{\eta}_\beta$ to $\eta$ based on frequency dependent changes of variables are as summarized in the next theorem.

\begin{theorem} \label{thm:eceviteruslu}
Given $(n_1,\ldots,n_6) \in \mathbb{Z}_+^6$, for all $k \ge k_0$ and $\mathbf{d} = (d_1,\ldots,d_6) \in \mathbb{N}^{6}$ with
$d_j \ge n_j-1$, we have
\[
	\Vert \eta - \hat{\eta}_\beta \Vert
	\lesssim \dfrac{C_k}{c_k} k^{-\frac{\beta}{3}} \sqrt{\log k} \,
	\Big( \sum_{j=1}^4 (\log k)^{n_j }(d_j)^{-n_j} +
	\sum_{j=5}^{6} (d_j)^{-n_j}
	\Big)
\]
for the $\beta$-asymptotic Galerkin approximation $\hat{\eta}_\beta \in \sigma_\beta + \mathcal{C}_{\mathbf{d}}$ to $\eta$
based on frequency dependent changes of variables. 
\end{theorem}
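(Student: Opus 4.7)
The plan is to mirror the proof of Theorem~\ref{thm:ecevitozen} but with the fixed subdivision replaced by the frequency dependent changes of variables $\phi_j$. Writing $\hat{\rho}_\beta = e^{ik \, \alpha \cdot \gamma} \hat{\rho}_\beta^{\rm slow}$ for the unique solution of the Galerkin formulation, one has
\[
	\eta - \hat{\eta}_\beta = \rho_\beta - \hat{\rho}_\beta = e^{ik \, \alpha \cdot \gamma} \sum_{j=1}^6 \charfunc_{\mathcal{I}_j} (\rho_\beta^{\rm slow} - \hat{\rho}_\beta^{\rm slow}).
\]
Invoking C\'{e}a's lemma with continuity constant $C_k$ and coercivity constant $c_k$ reduces the task to bounding, for each $j$, the best approximation $\inf_{p_j \in \mathbb{P}_{d_j}} \|\rho_\beta^{\rm slow} - p_j \circ \phi_j^{-1}\|_{L^2(\mathcal{I}_j)}$, with a prefactor $C_k/c_k$.

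For the transition regions $\mathcal{I}_j$ with $j=1,\ldots,4$, I would perform the change of variables $s = \phi_j(t)$, which yields
\[
	\|\rho_\beta^{\rm slow} - p_j \circ \phi_j^{-1}\|_{L^2(\mathcal{I}_j)}^2 = \int_{a_j}^{b_j} |(\rho_\beta^{\rm slow} \circ \phi_j)(t) - p_j(t)|^2 \, \phi_j'(t) \, dt.
\]
A direct computation gives $\phi_j^{(m)}(t) \asymp k^{\psi_j(t)} (\log k)^m$ for $m \ge 1$; in particular $\|\phi_j'\|_\infty \lesssim \log k$. I would then compute the $n_j$-th derivative of $h := \rho_\beta^{\rm slow} \circ \phi_j$ via the Fa\`a di Bruno formula and combine it with the estimate from Theorem~\ref{thm:etamder}. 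The crucial observation is that $W(\phi_j(t),k) \asymp k^{\psi_j(t)}$ on $[a_j,b_j]$, so each Fa\`a di Bruno term associated with a partition of $n_j$ into $r$ blocks contains one factor $(\rho_\beta^{\rm slow})^{(r)}(\phi_j(t))$ bounded by $k^{-\beta/3} k^{-r\psi_j(t)}$ and a product of $r$ derivatives of $\phi_j$ bounded by $k^{r\psi_j(t)} (\log k)^{n_j}$; the wavenumber powers cancel and one obtains $\|h^{(n_j)}\|_\infty \lesssim k^{-\beta/3} (\log k)^{n_j}$. Theorem~\ref{thm:pae} then delivers $\inf_{p_j}\|h - p_j\|_{L^2[a_j,b_j]} \lesssim k^{-\beta/3} (\log k)^{n_j} / d_j^{n_j}$, and the factor $\sqrt{\|\phi_j'\|_\infty} \lesssim \sqrt{\log k}$ introduced when returning to $\mathcal{I}_j$ produces the summand $\sqrt{\log k} \, k^{-\beta/3} (\log k)^{n_j}/d_j^{n_j}$.

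For the shadow boundary regions $\mathcal{I}_j$ with $j=5,6$, the map $\phi_j$ is the identity and $|\mathcal{I}_j| \asymp k^{-1/3}$. Since $W(s,k) \asymp k^{-1/3}$ on these regions, Theorem~\ref{thm:etamder} yields $|D_s^{n_j} \rho_\beta^{\rm slow}(s,k)| \lesssim k^{-\beta/3} k^{n_j/3}$, and a direct application of Theorem~\ref{thm:pae} gives $\inf_{p_j}\|\rho_\beta^{\rm slow} - p_j\|_{L^2(\mathcal{I}_j)} \lesssim k^{-\beta/3-1/6}/d_j^{n_j}$, which (since $k \ge k_0 > 1$) is comfortably absorbed by the term $\sqrt{\log k}\, k^{-\beta/3}/d_j^{n_j}$ appearing in the claimed bound. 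Summing the six contributions yields the stated estimate.

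The principal technical obstacle is the chain rule analysis on the transition regions: one must verify that in every term of the Fa\`a di Bruno expansion of $(\rho_\beta^{\rm slow} \circ \phi_j)^{(n_j)}$ the $k$-exponent contributed by the derivatives of $\phi_j$ exactly cancels the $k$-exponent of $W(\phi_j(t),k)^{-r}$ supplied by Theorem~\ref{thm:etamder}. This balance is precisely what motivates the linear interpolation of $\psi_j$ between $-\tfrac{1}{3}$ and $0$: it is engineered so that the derivatives of the change of variables absorb the boundary layer singularity of $\rho_\beta^{\rm slow}$, leaving only the harmless powers of $\log k$ arising from differentiating $k^{\psi_j(t)}$ in $t$.
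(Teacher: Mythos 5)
Your proposal is correct and follows essentially the same route as the paper: Céa's lemma, the change of variables introducing the $\sqrt{\log k}$ factor, the Faà di Bruno estimate with the cancellation $W(\phi_j,k)^{-1}k^{\psi_j}\lesssim 1$ (this is exactly the content of the paper's Lemma~\ref{lemma:dercomp}), and the weighted polynomial approximation estimate of Theorem~\ref{thm:pae}. The one small deviation is that on the shadow boundary regions you compute $\mathcal{W}(k;n_j;a_j,b_j)\lesssim k^{-1/6}$ directly from $W\asymp k^{-1/3}$ and $b_j-a_j\asymp k^{-1/3}$ rather than citing Lemma~\ref{lemma:babap}(iii); your bound is in fact slightly sharper, but both suffice.
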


\begin{proof}
Arguing as in the proof of Theorem~\ref{thm:ecevitozen}, and then changing variables and observing that $0 < \phi_j' \lesssim \log k$ on
$\mathcal{I}_j$, we obtain
\begin{align*}
	\Vert \eta - \hat{\eta}_\beta \Vert
	& \le \dfrac{C_k}{c_k} \sum_{j=1}^{6} \inf_{p_j \in \mathbb{P}_{d_j}} \Vert \rho_\beta^{\rm slow} - p_j \circ \phi_j^{-1} \Vert_{L^2(\mathcal{I}_j)}
	\\
	& = \dfrac{C_k}{c_k} \sum_{j=1}^{6} \inf_{p_j \in \mathbb{P}_{d_j}}
	\Vert (\rho_\beta^{\rm slow} \circ \phi_j - p_j) \sqrt{\phi'_j} \Vert_{L^2(\mathcal{I}_j)}
	\\
	& \lesssim \dfrac{C_k}{c_k} \sqrt{\log k} \, \sum_{j=1}^{6} \inf_{p_j \in \mathbb{P}_{d_j}}
	\Vert \rho_\beta^{\rm slow} \circ \phi_j - p_j \Vert_{L^2(\mathcal{I}_j)}.
\end{align*}
Theorem~\ref{thm:pae} in Appendix~\ref{sec:auxiliary} therefore gives
\begin{align*}
	\Vert \eta - \hat{\eta}_\beta \Vert
	& \lesssim \dfrac{C_k}{c_k} \sqrt{\log k} \, \sum_{j=1}^{6}
	\left[ \int_{a_j}^{b_j} |D_s^{n_j}(\rho^{\rm slow}_\beta \circ \phi_j)(s)|^{2}(s-a_j)^{n_j}(b_j-s)^{n_j} ds \right]^{\frac{1}{2}} (d_j)^{-n_j}.
\end{align*}
Accordingly, since $b_j - a_j \asymp 1$ for $j=1,2,3,4$, and $\phi_5$ and $\phi_6$ are identity maps, the next lemma yields
\begin{align*}
	\Vert \eta - \hat{\eta}_\beta \Vert
	& \lesssim \dfrac{C_k}{c_k} k^{-\frac{\beta}{3}} \sqrt{\log k} \,
	\Big( \sum_{j=1}^4 (\log k)^{n_j }(d_j)^{-n_j} +
	\sum_{j=5}^{6}
	\mathcal{W}(k,n_j,a_j,b_j) \, (d_j)^{-n_j}
	\Big)
\end{align*}
where $\mathcal{W}$ is as defined in \eqref{eq:mathcalW}. Thus the result follows from part (iii) of Lemma~\ref{lemma:babap}.
\end{proof}

\begin{lemma} \label{lemma:dercomp}
Given $k \ge k_0$ and $n \in \mathbb{N}$, the estimates
\begin{equation} \label{eq:Dsnrhobetacompphijestlem}
	|D_s^n (\rho_\beta^{\rm slow} \circ \phi_j)|
	\lesssim k^{-\frac{\beta}{3}}
	\left\{
		\begin{array}{ll}
			(\log k)^n,
			& j=1,2,3,4,
			\\
			W(\cdot,k)^{-n},
			& j=5,6,
		\end{array}
	\right.
\end{equation}
hold on $\mathcal{I}_j$.
\end{lemma}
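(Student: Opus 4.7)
The plan is to handle the two cases in the piecewise definition separately. For $j=5,6$, the maps $\phi_5$ and $\phi_6$ are the identity, so $\rho_\beta^{\rm slow} \circ \phi_j = \rho_\beta^{\rm slow}$ and the bound $|D_s^n \rho_\beta^{\rm slow}| \lesssim k^{-\beta/3} W(\cdot,k)^{-n}$ is precisely the second estimate supplied by Theorem~\ref{thm:etamder}. Hence the only genuine content is the transition case $j \in \{1,2,3,4\}$, which I treat via the Faà di Bruno formula combined with Theorem~\ref{thm:etamder}.

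For $j \in \{1,2,3,4\}$, the first step is to derive precise derivative bounds on $\phi_j$. Since $\varphi_j$ is linear and $\psi_j$ is linear with image in $[-\tfrac{1}{3},0]$, writing $g(s) = k^{\psi_j(s)}$ we get $g^{(m)}(s) = (\psi_j'(s)\log k)^m g(s)$, and expanding $\phi_j = t_j \pm \varphi_j g$ by the Leibniz rule (only the $\varphi_j$ and $\varphi_j'$ terms survive) yields
\[
  |\phi_j^{(m)}(s)| \lesssim (\log k)^m \, k^{\psi_j(s)}
  \qquad (m \ge 1).
\]
The second step is to compute $W(\phi_j(s),k)$ on the transition region. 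Using $\omega(r) = (r-t_1)(t_2-r)$ together with the normalization $\xi_1'+\xi_2' = t_2-t_1$ (and its analogue in the shadow), one checks that the factor of $\omega$ coming from the far shadow boundary stays $\asymp 1$, while the nearby factor is exactly $\pm\varphi_j(s)k^{\psi_j(s)} \asymp k^{\psi_j(s)}$. Since $\psi_j(s) \ge -\tfrac{1}{3}$, this gives
\[
  W(\phi_j(s),k) \asymp k^{-1/3} + k^{\psi_j(s)} \asymp k^{\psi_j(s)}.
\]

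The final step is to plug these two estimates into Faà di Bruno: for any composition $m_1 + 2m_2 + \cdots + nm_n = n$ with $K = m_1 + \cdots + m_n$, the associated term in $D_s^n(\rho_\beta^{\rm slow} \circ \phi_j)(s)$ is bounded by
\[
  |(D^K\rho_\beta^{\rm slow})(\phi_j(s),k)| \prod_{r=1}^n |\phi_j^{(r)}(s)|^{m_r}
  \lesssim k^{-\beta/3} W(\phi_j(s),k)^{-K} (\log k)^n k^{K\psi_j(s)}.
\]
By the $W \asymp k^{\psi_j}$ identity above, the two powers of $k^{\psi_j(s)}$ cancel exactly, leaving $k^{-\beta/3}(\log k)^n$ for every term. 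Summing over the finitely many compositions yields the required bound.

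The main technical point, and where I would be most careful, is the identity $W(\phi_j(s),k) \asymp k^{\psi_j(s)}$: one has to verify uniformly over $s \in \mathcal{I}_j$ both that the "far" factor of $\omega(\phi_j(s))$ is bounded away from zero (this is where the gluing condition $t_1+\xi_1' = t_2-\xi_2'$ matters) and that the "near" factor is genuinely of order $k^{\psi_j(s)}$ and not something smaller. Once that is in place, the cancellation of powers of $k^{\psi_j(s)}$ in the Faà di Bruno expansion is automatic and no further case analysis is needed.
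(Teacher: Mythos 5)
Your proof is correct and follows essentially the same route as the paper: apply Faà di Bruno, bound $|\phi_j^{(m)}| \lesssim (\log k)^m k^{\psi_j}$ (the paper cites this as \cite[Proposition~4.3]{EcevitEruslu19}, you re-derive it in a line), invoke Theorem~\ref{thm:etamder}, and then cancel the powers of $k^{\psi_j}$ against $W(\phi_j,k)^{-1}$. The only cosmetic difference is that you assert the two-sided comparison $W(\phi_j,k) \asymp k^{\psi_j}$, whereas the paper only establishes (and only needs) the one-sided bound $W(\phi_j,k)^{-1}k^{\psi_j} \lesssim 1$; both are obtained from the same factorization of $\omega(\phi_j)$ into a near factor of size $\asymp k^{\psi_j}$ and a far factor $\asymp 1$.
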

\begin{proof}
For $n=0$, the result is immediate from Theorem~\ref{thm:etamder}, so we assume $n \ge 1$.
For $j=1,\ldots, 4$, we utilize Fa\'{a} Di Bruno's formula for the derivatives of a composition \cite{Johnson02} to estimate
\[
	|D^n_s (\rho_\beta^{\rm slow} \circ \phi_j)|
	\lesssim \sum_{(m_{1},\ldots,m_{n}) \in \mathcal{F}_n}
	|(D^{m}_s \rho_\beta^{\rm slow}) (\phi_j)|
	\prod_{\ell=1}^{n} |D^{\ell}_s \phi_j|^{m_{\ell}}
\]
where $\mathcal{F}_n = \{ (m_1,\ldots,m_n) \in \mathbb{Z}^n_+ : n = \sum_{\ell=1}^{n} \ell m_{\ell} \}$
and $m = \sum_{\ell=1}^{n} m_{\ell}$. Since \cite[Proposition 4.3]{EcevitEruslu19}
\[
	|D^{\ell}_s \phi_j| \lesssim (\log k)^{\ell} k^{\psi_j},
\]
we therefore obtain
\begin{align}
	|D^n_s (\rho_\beta^{\rm slow} \circ \phi_j)|
	& \lesssim (\log k)^n
	\sum_{(m_{1},\ldots,m_{n}) \in \mathcal{F}_n}
	|(D^{m}_s \rho_\beta^{\rm slow}) (\phi_j)|
	k^{m\psi_j}
	\label{eq:Dsnrhobetacompphij}
	\\
	& \lesssim \left( \log k \right)^n
	\sum_{m=0}^{n} |(D^{m}_s \rho_\beta^{\rm slow}) (\phi_j)|
	k^{m\psi_j}
	\lesssim k^{-\frac{\beta}{3}} \, (\log k)^n
	\sum_{m=0}^{n} W(\phi_j,k)^{-m} k^{m\psi_j},
	\nonumber
\end{align}
where the last inequality is a consequence of Theorem~\ref{thm:etamder}.
Note that, since
\begin{align*}
	W(\phi_j,k)
	= k^{-\frac{1}{3}} + |\omega(\phi_j)|
	> |\omega(\phi_j)|
	= |(\phi_j-t_1)(t_2-\phi_j)|
\end{align*}
and
\[
	\begin{array}{lll}
		\phi_1-t_1 = \varphi_1k^{\psi_1} \ge \varphi_1(a_1)k^{\psi_1} = \xi_1k^{\psi_1}, 
		& t_2 - \phi_1 \ge t_2-\phi_1(b_1) = \xi'_2,
		& \text{on } \mathcal{I}_1,
		\\
		\phi_2 - t_1 \ge \phi_2(a_2) -t_1 = \xi'_1,
		& t_2 - \phi_2 = \varphi_2k^{\psi_2} \ge \varphi_2(a_2)k^{\psi_2} = \xi'_2k^{\psi_2}, 
		& \text{on } \mathcal{I}_2,
		\\
		t_1 - \phi_3 = \varphi_3k^{\psi_3} \ge \varphi_3(b_3)k^{\psi_3} = \zeta_1k^{\psi_3}, 
		& t_2 - \phi_3  > t_2 - t_1,
		& \text{on } \mathcal{I}_3,
		\\
		\phi_4 - t_1  > t_2 - t_1,
		& \phi_4 - t_2 = \varphi_4k^{\psi_4} \ge \varphi_4(a_4)k^{\psi_4} = \zeta'_2k^{\psi_4},
		& \text{on } \mathcal{I}_4,
	\end{array} 
\]
setting $\xi = \min \{ \xi_1\xi'_1, \xi'_1\xi'_2, \zeta_1(t_2-t_1),\zeta_2'(t_2-t_1) \}$, we have
\begin{equation} \label{eq:Winvkpsiest}
	W(\phi_j,k)^{-1} k^{\psi_j} \le \frac{1}{\xi^2}
	\quad
	\text{on }
	\mathcal{I}_j
\end{equation}
for $j=1,2,3,4$. Use of \eqref{eq:Winvkpsiest} in \eqref{eq:Dsnrhobetacompphij} therefore proves 
\eqref{eq:Dsnrhobetacompphijestlem} for $j=1,2,3,4$. Note that \eqref{eq:Dsnrhobetacompphijestlem}
is immediate from Theorem~\ref{thm:etamder} for $j =5,6$ since, in this case, $\phi_j$ is the identity map on $\mathcal{I}_j$.
This finishes the proof.
\end{proof}

Assigning the same local polynomial degree $d$ to each interval $\mathcal{I}_j$, we obtain the following.

\begin{corollary} \label{cor:chvar}
Given $n \in \mathbb{Z}_+$, for all $k \ge k_0$ and $\mathbf{d} = (d,\ldots,d) \in \mathbb{N}^{6}$ with $ d \ge n-1$, we have 
\begin{equation} \label{eq:newalgerror}
	\Vert \eta - \hat{\eta}_\beta \Vert_{_{L^2(\partial K)}}
	\lesssim
	\dfrac{C_k}{c_k} \, k^{-\frac{\beta}{3}} \, \dfrac{\left( \log k \right)^{n+\frac{1}{2}}}{d^n} 
\end{equation}
for the $\beta$-asymptotic Galerkin approximation $\hat{\eta}_\beta \in \sigma_\beta + \mathcal{C}_{\mathbf{d}}$ to $\eta$ based on frequency dependent changes of variables.
\end{corollary}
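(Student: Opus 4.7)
The statement is a direct specialization of Theorem~\ref{thm:eceviteruslu}, so the plan is essentially to set all $n_j=n$ and $d_j=d$ in the general error estimate and simplify. First I would invoke Theorem~\ref{thm:eceviteruslu} with the tuple $(n_1,\ldots,n_6)=(n,\ldots,n)$ and $\mathbf{d}=(d,\ldots,d)$, which is admissible since $d\ge n-1$. This immediately yields
\[
	\Vert \eta - \hat{\eta}_\beta \Vert_{L^2(\partial K)}
	\lesssim \dfrac{C_k}{c_k} \, k^{-\frac{\beta}{3}} \, \sqrt{\log k} \,
	\left( \sum_{j=1}^{4} (\log k)^{n} d^{-n} + \sum_{j=5}^{6} d^{-n} \right).
\]

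The next step is purely arithmetic: there are four transition-region terms of size $(\log k)^n d^{-n}$ and two shadow-boundary terms of size $d^{-n}$, so their sum is bounded (up to a constant independent of $k$ and $d$) by $(\log k)^n d^{-n}$ since $\log k \ge \log k_0 > 0$ for $k \ge k_0 > 1$. Multiplying by the outer $\sqrt{\log k}$ factor gives the total bound $(\log k)^{n+\frac12} d^{-n}$, which combined with the prefactor $\tfrac{C_k}{c_k} k^{-\beta/3}$ yields \eqref{eq:newalgerror}.

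There is essentially no obstacle here; the only mild point to verify is that the constants absorbed by $\lesssim$ do not depend on $k$, $d$, or $n$ in a way that interferes with the final estimate. Since the number of regions is fixed at six and the constants in Theorem~\ref{thm:eceviteruslu} are already wavenumber independent, collapsing the six summands into one merely costs a universal constant. Hence the corollary follows in two lines, and no additional technical machinery beyond Theorem~\ref{thm:eceviteruslu} is required.
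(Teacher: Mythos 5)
Your proof is correct and coincides with the paper's approach: the paper itself gives no separate argument for Corollary~\ref{cor:chvar}, merely prefacing it with the remark that one assigns the same local polynomial degree to each interval, so your specialization of Theorem~\ref{thm:eceviteruslu} (taking $n_j=n$, $d_j=d$ and absorbing the six-term sum into a single $(\log k)^n d^{-n}$ using $\log k\ge\log k_0>0$) is exactly the intended derivation.
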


In the case that the stability constant  $C_k/c_k$ grows algebraically (i.e. $C_k/c_k \asymp k^{\delta}$ for some $\delta >0$) as $k \to \infty$,
Corollary~\ref{cor:chvar} implies the followings. First, recall that $\beta = 0$ means no term $a_{p,q,r,\ell}$ in the ansatz \eqref{eq:MT85Neumann} is incorporated into the integral equation \eqref{eq:inteq}. Still, even in this case, estimate \eqref{eq:newalgerror} clearly implies that the method is not
only spectral for each fixed $k$, but also increasing the number of degrees of freedom proportional to $k^{\epsilon}$ (for any $\epsilon >0$) is sufficient to fix the
approximation error with increasing $k$. In other words, the method is spectral and \emph{almost} frequency independent. When $\beta >0$ is chosen so that
$\delta - \beta/3 <0$, the method is still spectral for each fixed $k$, and moreover it is frequency independent in the sense that prescribed accuracies can
be attained with the utilization of frequency independent numbers of degrees of freedom with increasing $k$.

\section{Numerical results} \label{sec:5}

In this section, we present numerical results validating the theoretical developments on the \emph{$\beta$-asymptotic Galerkin boundary element method
based on frequency dependent changes of variables} (GBemCV) described in \S\ref{sec:method2}. Since the explicit forms of the terms $a_{p,q,r,\ell}$
in the asymptotic expansion \eqref{eq:MT85Neumann} are not available over the entire boundary $\partial K$, we take $\beta =0$.
The numerical results obtained using the \emph{frequency-adapted $\beta$-asymptotic Galerkin boundary element method} detailed in \S\ref{sec:method1}
are entirely similar, and therefore not presented.

Several integral equations can be used to solve the Neumann problem \eqref{eq:Helmholtz}, and they are expressed in the general form  
\begin{equation} \label{eq:geninteq}
	\mathcal{R}_k \eta = f_k
	\quad
	\text{on }
	\partial K
\end{equation}
where $\eta$ is the total field \cite{ColtonKress83}. Using standard techniques, the operator $\mathcal{R}_k$ and the right hand side  $f_k$ can be taken as 
\begin{align}
	\mathcal{R}_k & = \mathcal{I}-2\mathcal{K}_k,
	\hspace{2.54cm}
	f_k = 2u^{\rm inc} \label{eq:1}
	\\
	\mathcal{R}_k & = \mathcal{T}_k,
	\hspace{3.52cm}
	f_k = \partial_{\nu}u^{\rm inc}
	\nonumber
	\\
	\mathcal{R}_k & = \mathcal{I} -2(\mathcal{K}_k + i \mu \mathcal{T}_k),
	\hspace{1cm}
	f_k = 2(u^{\rm inc} - i \mu \partial_{\nu} u^{\rm inc})
	\nonumber
\end{align}
where $\mu$ is a real coupling parameter, and the double layer and hyper-singular operators
are given respectively by
\[
	\mathcal{K}_k \phi(x) = \int_{\partial K} \dfrac{ \partial G_k(x,y)}{\partial \nu(y)} \, \phi(y) \, ds(y),
	\qquad
	\mathcal{T}_k \phi(x) = \dfrac{\partial}{\partial \nu(x)} \int_{\partial K} \dfrac{ \partial G_k(x,y)}{\partial \nu(y)} \, \phi(y) \, ds(y).
\]
For the numerical tests performed here, we choose to implement the integral equation \eqref{eq:geninteq} using \eqref{eq:1} owing to the fact that
it contains only integral operators with weakly singular kernels. We discretize the operator $\mathcal{R}_k$ through use of the trapezoidal rule and
the Nystr\"{o}m method \cite{ColtonKress92} utilizing $10$ to $12$ points per wavelength. Regarding the implementation, we refer to
\cite{EcevitOzen17,EcevitEruslu19} since it is similar to its Dirichlet version.

We consider two different single-scattering configurations consisting of the unit circle $\{(\cos t,\sin t):  \, t \in [0,2\pi] \}$,
and the ellipse $\{(\frac{3}{2}\cos t,\frac{1}{2} \sin t):  \, t \in [0,2\pi] \}$ rotated by $\frac{\pi}{6}$ radians in the
counterclockwise direction. In both cases, the illumination is coming in from the left as depicted
in Figure~\ref{fig:configs}. 
\begin{figure}[htbp]
	\begin{center}		
		\includegraphics[height=4.9cm, trim={1.9cm 0.7cm 2cm 1cm},clip]{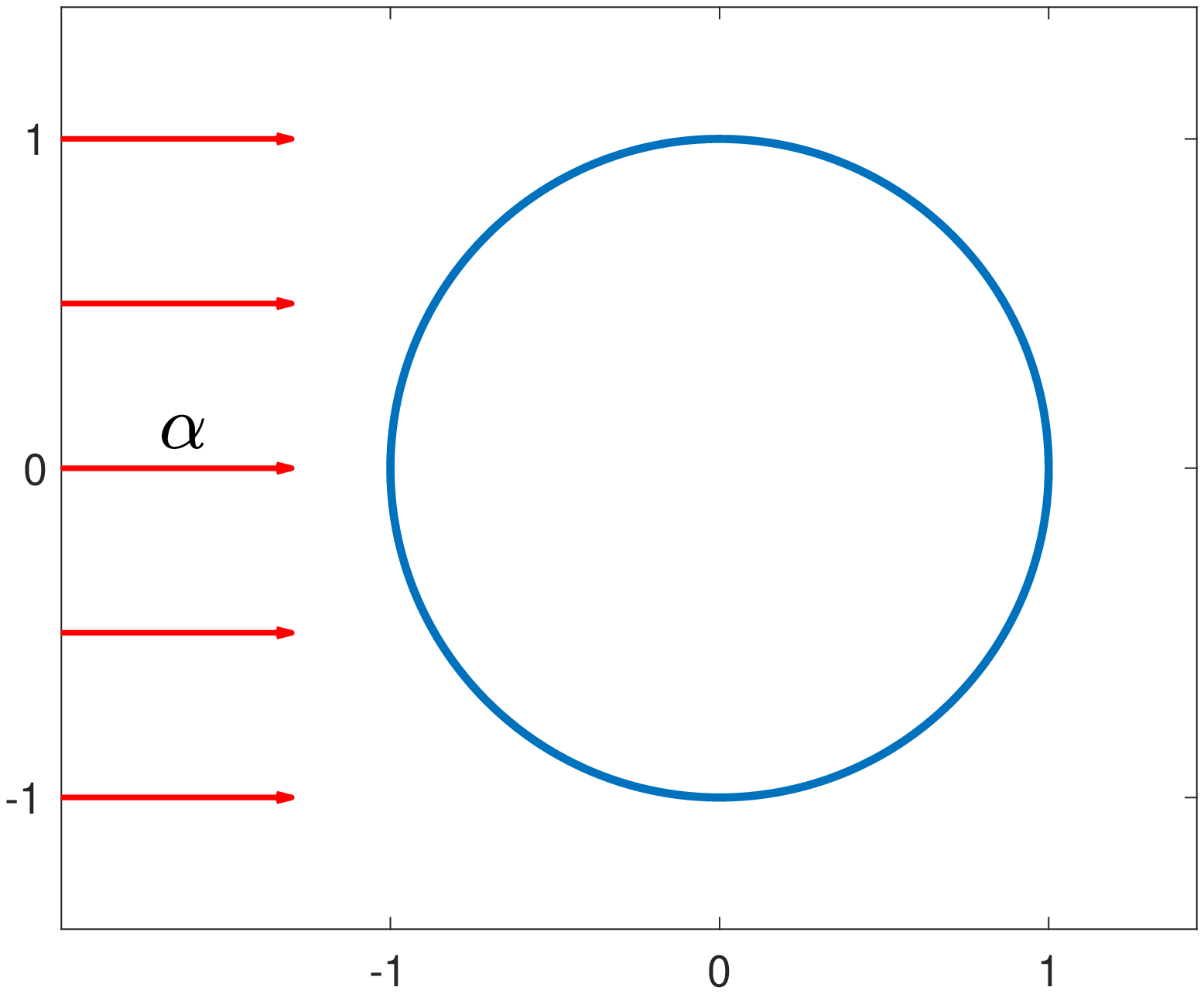}
		\qquad
		\includegraphics[height=4.9cm, trim={1.8cm 0.7cm 1.5cm 1cm},clip]{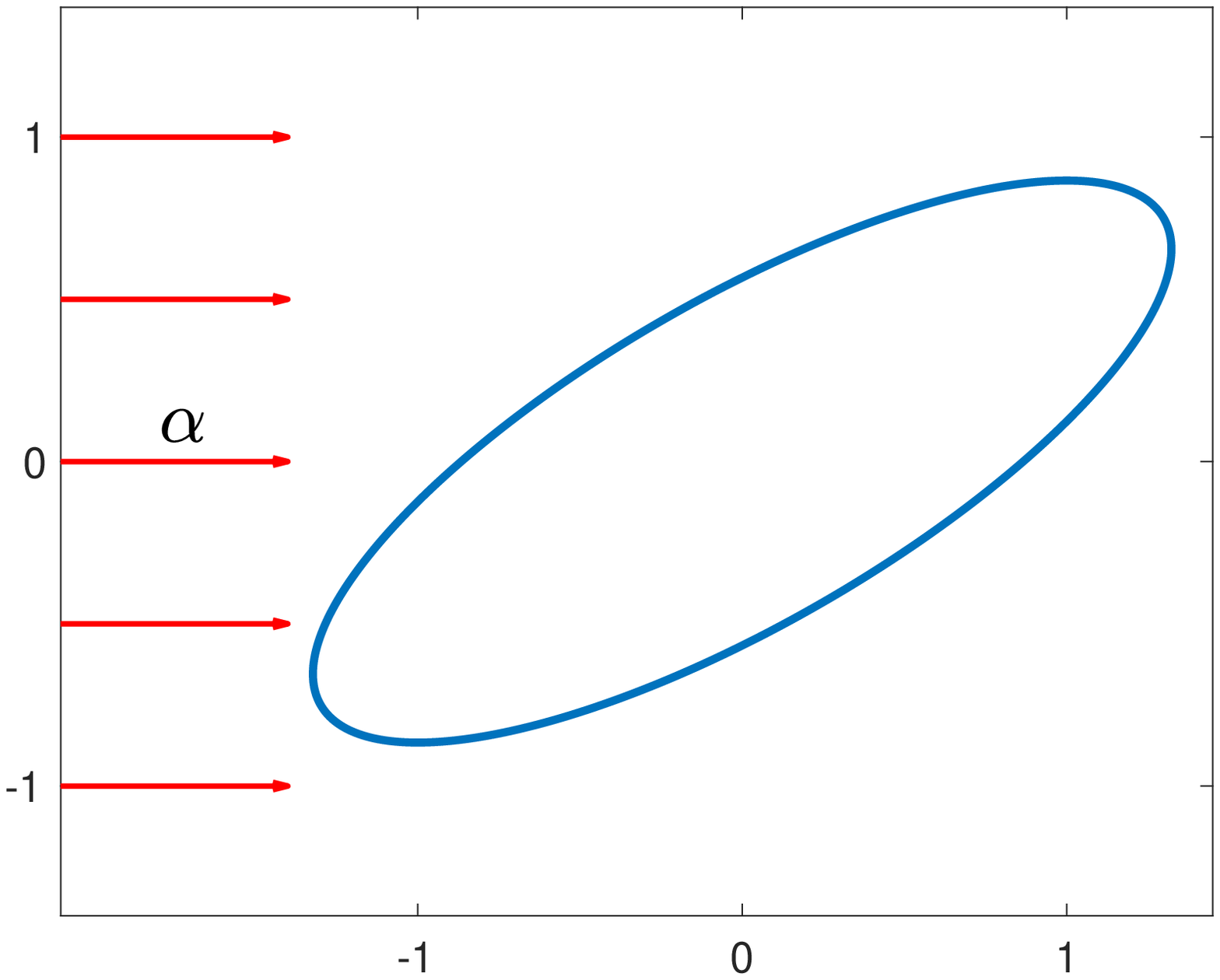} \vspace{-0.5cm}
	\end{center}
	\caption{Configurations used in the numerical tests.}
	\label{fig:configs}
\end{figure}

The unit circle is the standard test case since the analytical solution can be derived
with the aid of Fourier analysis and Jacobi-Anger expansion \cite{ColtonKress92}. For a circle of radius $r$,
switching to polar coordinates, the analytical solution for the Neumann boundary value problem is expressed as
\begin{equation} \label{eq:analytical-solution}
	\eta(\theta)
	= e^{ik r \cos \theta} + 
	\sum_{m \in \mathbb{Z}} i^{m+2} \frac{(J_{m} (kr))'}{(H^{(1)}_{m}(kr))'} \, e^{im\theta} \, H^{(1)}_m(kr).
\end{equation}
We display in Figure~\ref{fig:circle-solution} the real and imaginary parts of the analytical solution $\eta$ 
along with those of the envelope $\eta^{\rm slow}$ for $k = 50, 100, 200, 400, 800$. We observe that the oscillations in the solution $\eta$ are amplified with
increasing frequency. In addition, as the asymptotic theory predicts, $\eta^{\rm slow}$ is non-oscillatory
in the illuminated region, admits boundary layers around the shadow boundaries, and decays exponentially with increasing frequency in the deep shadow region.
In order to validate the accuracy of the approximations, we compare in Figure~\ref{fig:Yassine} the exact value and the numerical approximation of
$\eta$ for $k=400$. We can see that the GBemCV solution successfully approximates the exact solution over the entire boundary.

We analyze now the error produced by the use of the GBemCV method. The strategy resides in evaluating the error with respect to increasing
values of the local polynomial degree $d$ (see Corollary~\ref{cor:chvar}). In Figure~\ref{fig:error} (left), we plot the logarithmic $L^{2}$-errors
\begin{equation} \label{eq:logerror}
	\log_{10} ( \Vert \eta-\hat{\eta}_{\beta} \Vert_{L^2})
\end{equation}
for $d = 4,8,12,16,20$ and $k = 50, 100, 200, 400, 800$. Let us mention that $\hat{\eta}_{\beta}$ is defined in \eqref{eq:defineetabeta}, and $\eta$ represents the analytical
solution in \eqref{eq:analytical-solution}. In our numerical tests, we have constructed the Galerkin approximation spaces $\mathcal{C}_{\mathbf{d}}$ in \eqref{eq:GalASCV} utilizing
the same local polynomial degree $d$ on each of the six direct summands which generates the total number of degrees of freedom
$6(d+1)$. The results in Figure~\ref{fig:error} (left) show that, for any fixed value of $k$, the accuracy increases with increasing values of
$d$, and the method is frequency independent. 

For the elliptical configuration displayed in Figure~\ref{fig:configs}, since the analytical solution is not available, we first start by displaying in
Figure~\ref{fig:ellipse-solution} the numerical solutions obtained by the GBemCV method (for $\beta = 0$). We can observe that
$\eta$ and $\eta^{\rm slow}$ exhibit properties similar to the circle case. More precisely, with increasing wavenumber, $\eta^{\rm slow}$ is non-oscillatory
in the illuminated region, admits boundary layers around the shadow boundaries, and it decays exponentially in the deep shadow region.  
Concerning the error analysis, we plot in Figure~\ref{fig:error} (right) the local polynomial degree $d$ versus logarithmic $L^{2}$-errors
for $d = 4,8,12,16,20$ and $k = 50, 100, 200, 400, 800$. As mentioned above, in \eqref{eq:logerror}, $\hat{\eta}_{\beta}$ is defined in \eqref{eq:defineetabeta}, however,
$\eta$ is computed using the Nystr\"{o}m method \cite{ColtonKress92}. As in the case of the circle, the results in Figure~\ref{fig:error} (right) show that for any wavenumber
the accuracy increases with the increasing local polynomial degree $d$, and the method is also frequency independent. 
 
\begin{figure}[htbp]
	\centering
	\includegraphics*[width=5.9in,viewport=110 85 900 1185,clip]{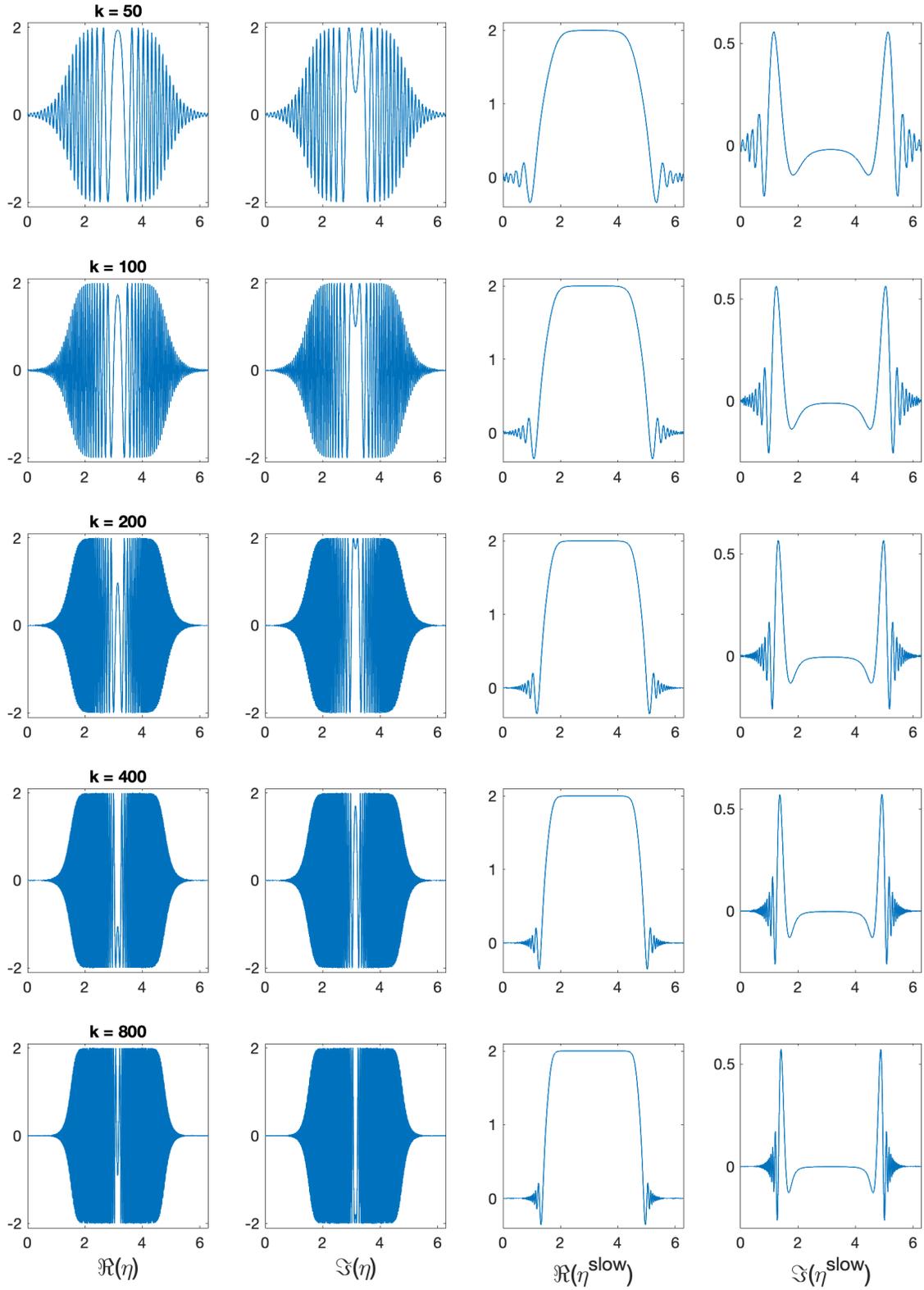}		
	\caption{Real (first column) and imaginary (second column) parts of $\eta$, and those of $\eta^{\rm slow}$ (columns three and four)
	associated with the circular scatterer in Figure~\ref{fig:configs} for $k = 50, 100, 200, 400, 800$.}
	\label{fig:circle-solution}
\end{figure}

\begin{figure}[htbp]
	\centering
	\includegraphics*[width=6.5in,viewport=60 30 840 440,clip]{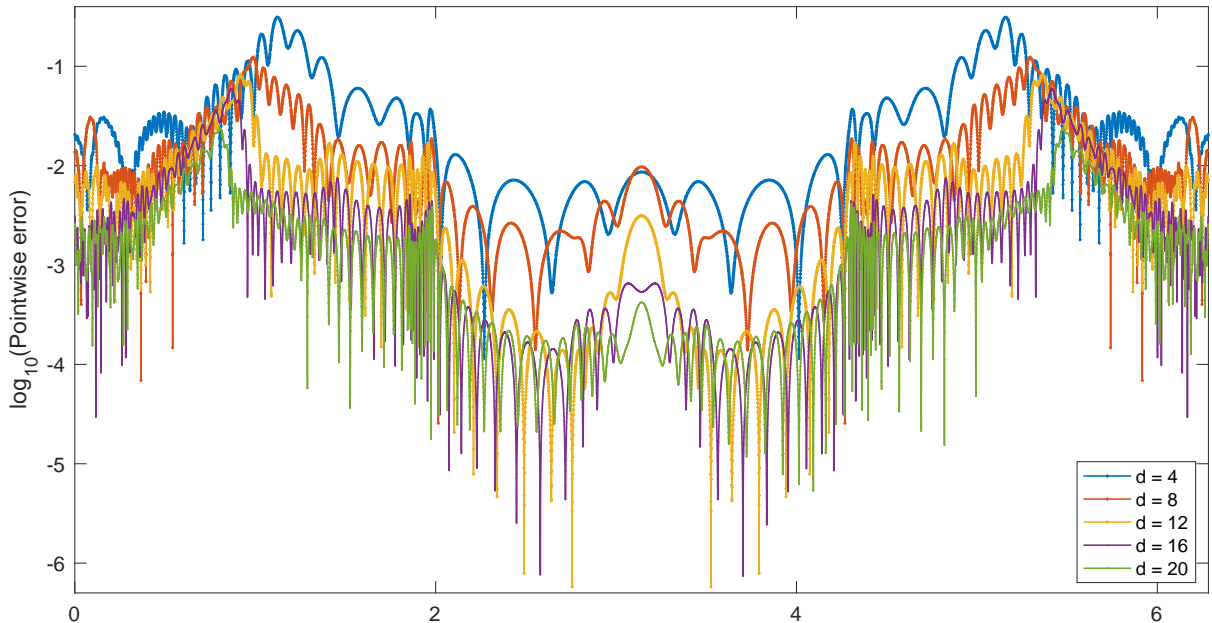}		
	\caption{Logarithmic pointwise errors for the unit circle in Figure~\ref{fig:configs} for $k = 400$ and local polynomial degrees $d = 4,8,12,16,20$.}
	\label{fig:Yassine}
\end{figure}

\begin{figure}[htbp]
	\centering
	\includegraphics*[width=5.9in,viewport=110 85 900 1185,clip]{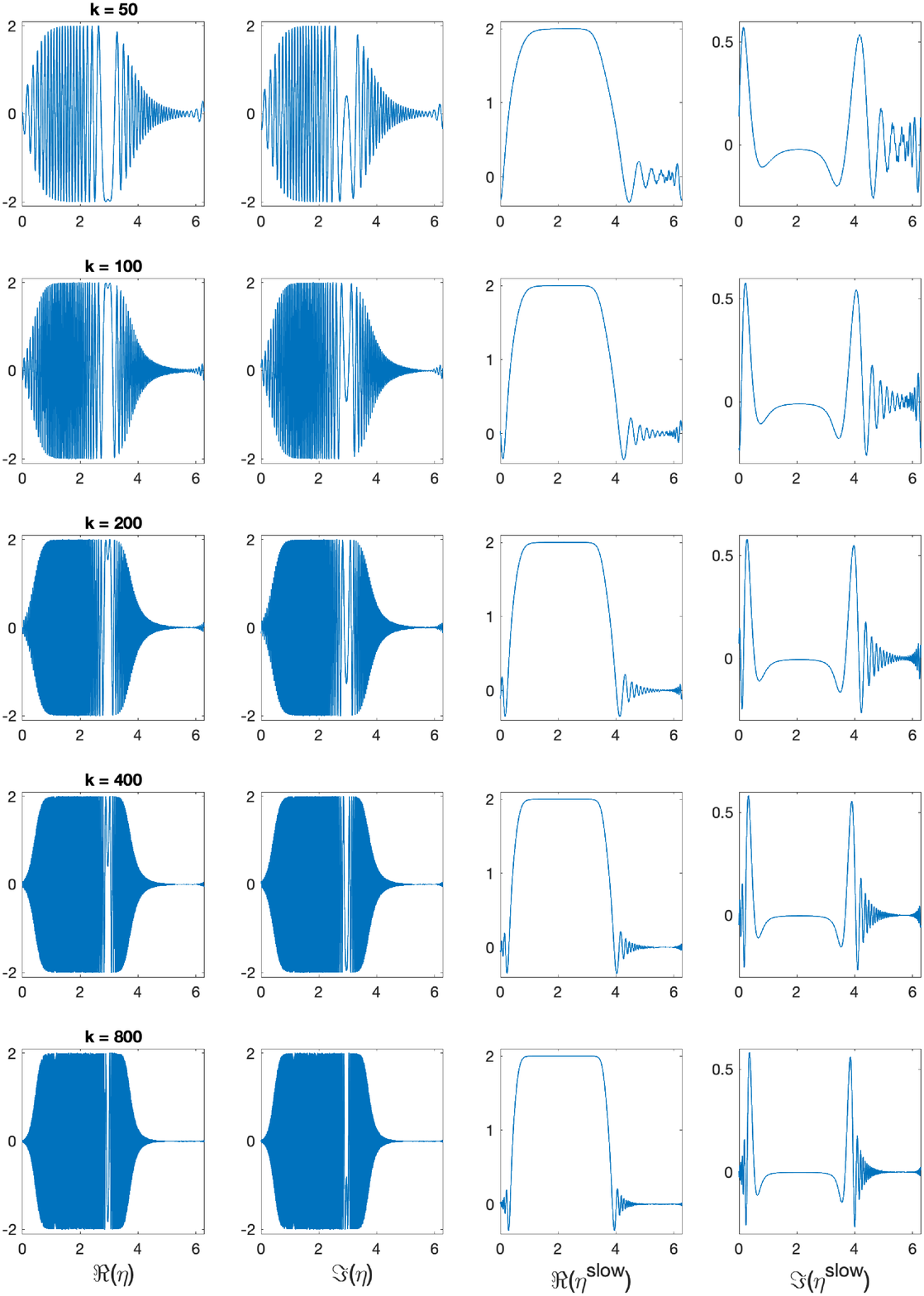}		
	\caption{Real (first column) and imaginary (second column) parts of $\eta$, and those of $\eta^{\rm slow}$ (columns three and four)
	associated with the elliptical scatterer in Figure~\ref{fig:configs} for $k = 50, 100, 200, 400, 800$.}
	\label{fig:ellipse-solution}
\end{figure}

\begin{figure}[htbp]
	\centering
	\subfigure[Unit circle]{
	\includegraphics*[width=3.0in,viewport=5 0 510 400,clip]{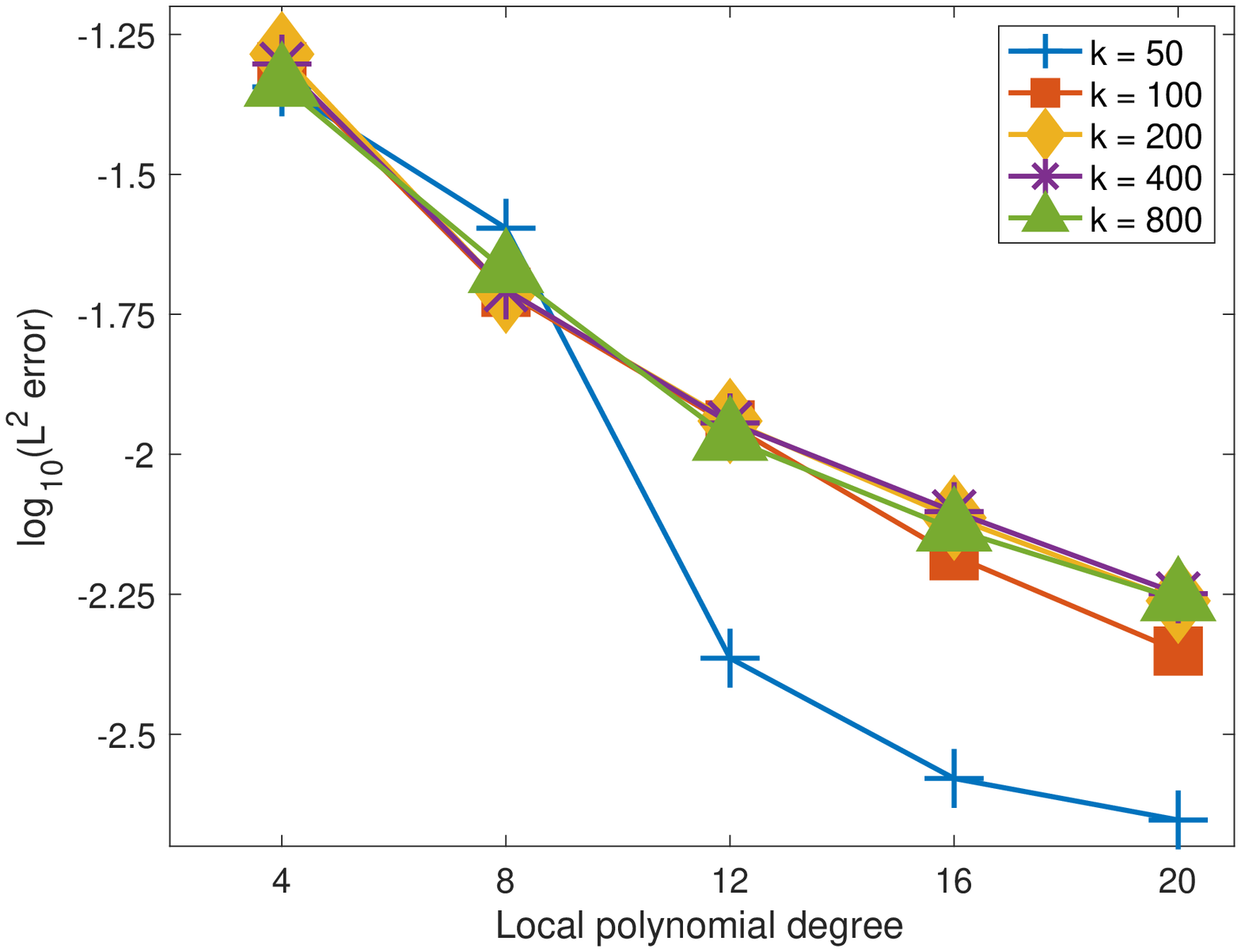}}
	\subfigure[Ellipse]{
	\includegraphics*[width=3.0in,viewport=5 0 510 400,clip]{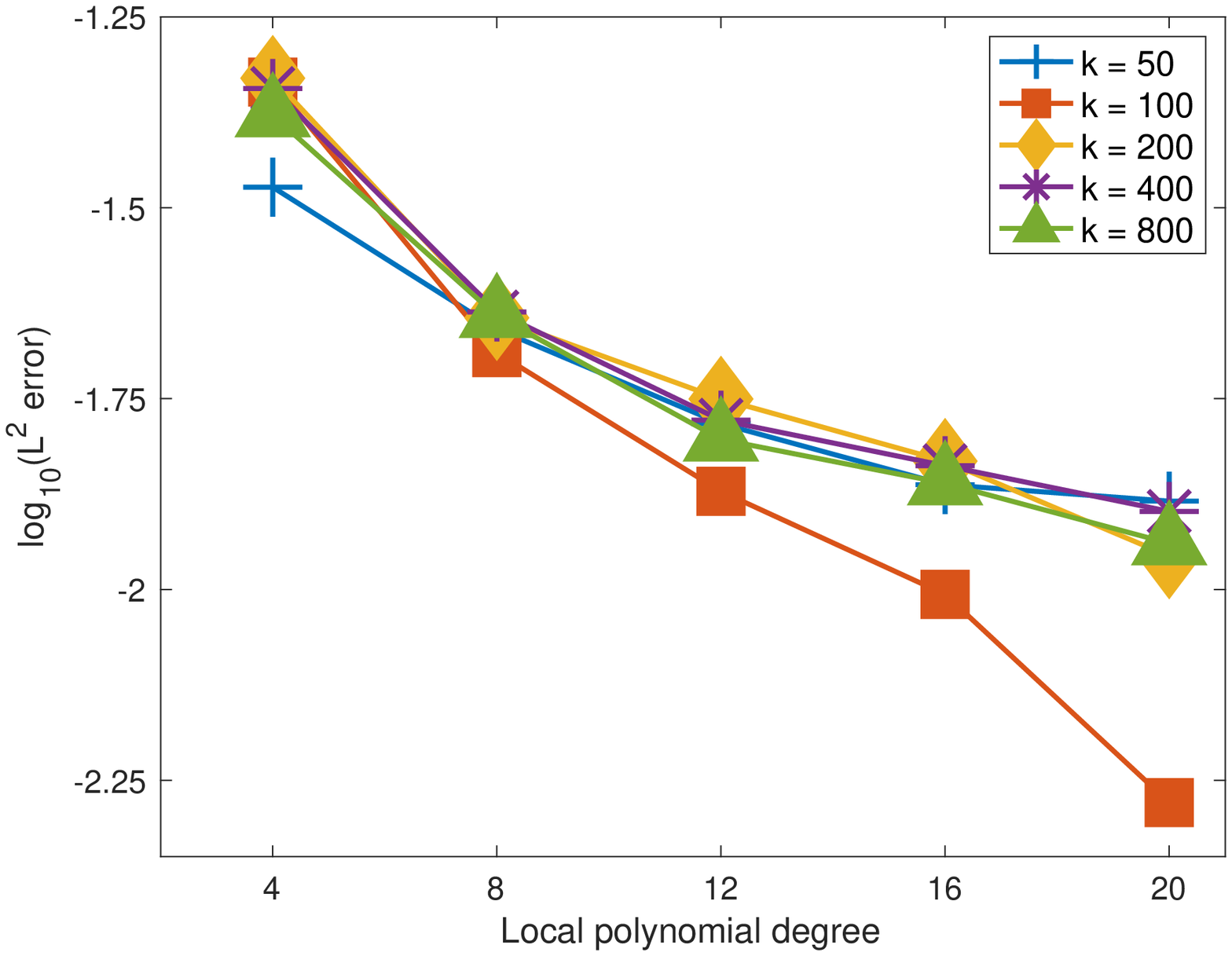}}		
	\caption{Local polynomial degree versus $\log_{10}$($L^2$ error) for the unit circle (left) and the ellipse (right) for 
	$k = 50, 100, 200, 400, 800$.}
	\label{fig:error}
\end{figure}

\section{Conclusion} \label{sec:6}

In this paper, we proposed two different $\beta$-asymptotic Galerkin boundary element methods for approximations of solutions to high-frequency
sound-hard scattering problems. These two methods are based on the ansatz describing the asymptotic behavior of the total field associated with
the Neumann boundary condition. We provided the missing parts in the derivation of this ansatz given in \cite{MelroseTaylor85}.
For the convergence analyses, we used the ansatz to derive wavenumber explicit estimates on the derivatives of the slow envelope corresponding
to the total field on the boundary. An important ingredient of these analyses resides in the use of appropriate number of terms in the asymptotic expansion.
This resulted in the design of Galerkin boundary element methods that deliver prescribed
error tolerances using frequency independent numbers of degrees of freedom.
The numerical implementations confirm that the solutions corresponding to any fixed number of degrees
of freedom yield frequency independent approximations.

\appendix


\section{Asymptotic expansion of the total field} \label{sec:asymptotics} 

The ansatz representing the asymptotic expansion of the total field $\eta$ for the Neumann boundary value problem used in the present paper is given by Melrose and Taylor
in \cite{MelroseTaylor85}. However, the authors did not present all the mathematical steps needed in its derivation. In this section, we provide the missing details of this analysis.

Let $K \subset \mathbb{R}^{n+1}$ be a compact strictly convex obstacle such that $B= \partial K \subset \mathbb{R}^{n+1}$ is a smooth
hyper-surface, and consider the Neumann-to-Dirichlet operator
\begin{equation}
	N^{-1} : \mathcal{E}'({\mathbb{R}\times B}) \ni f(t,x)\mapsto v(t,x)|_{\mathbb{R}\times B} \in \mathcal{D}'({\mathbb{R}\times B})   
\end{equation}
where $\mathcal{D}'(B \times \mathbb{R})$ and $\mathcal{E}'({\mathbb{R}\times B})$ are the spaces of distributions and compactly
supported distributions respectively, and $v$ is the solution to the wave problem 
\begin{equation}\label{prob}
	\left\{
		\begin{array}{ll}
			(\partial_{tt} -\Delta ) v(t,x)=0 & \text{in } \mathbb{R}\times \Omega,
			\\
			\partial_{\nu}v(t,x)|_{\mathbb{R}\times B} = f(t,x) & \text{on } \mathbb{R} \times B,
		\end{array}
		\right.
\end{equation}
wherein $\Omega = \mathbb{R}^{n+1} \setminus K$ is the exterior domain, and $\nu$ is the outward unit normal.

In what follows, for an incident field $v^{i}(t,x) = \delta(t - \alpha \cdot x)$ ($\delta$ is the Dirac function)
with direction $\alpha \in \mathcal{S}^n$, we denote the solution of the wave problem \eqref{prob} associated with
$f(t,x) = (\alpha \, \cdot \, \nu(x)) \partial_t v^{i}(t,x)$ by $v$. In this case, the total field $v^t = v + v^{i}$ can be expressed on the boundary as \cite[p.296]{MelroseTaylor85}
\begin{equation}\label{sol on boundary}
	v^t(t,x)|_{\mathbb{R} \times B}
	= (I+N^{-1}(\alpha \cdot \nu(x)) \partial_t) \delta(t - \alpha \cdot x)
\end{equation} 
where $I$ is the identity operator. Using the same notation and procedure in \cite{MelroseTaylor85}, we define the \emph{Kirchhoff operator}
(see \cite[Equation 8.26]{MelroseTaylor85})
\[
	Q_N : \mathcal{E'}(\mathcal{S}^n \times \mathbb{R}) \ni v(t,x)
	\mapsto
	(I+N^{-1}(\alpha \cdot \nu(x) ) \partial_t )Fv(t,x) \in \mathcal{D'}(B \times \mathbb{R})
\]	
where $F$ is the Fourier integral operator \cite[Equation 9]{LazerguiBoubendir17}
\begin{equation} \label{FF}
	Fv(t,x) = \int_{\mathbb{R}\times \mathcal{S}^n} \kappa_F(t-s,w,x) v(s,\alpha) ds d \alpha 
\end{equation}
with kernel $\kappa_{F}(t,\alpha,x)=\delta(t-\alpha\cdot x)$.

As shown in \cite{MelroseTaylor85}, the asymptotic behavior of the total field $\eta$ is determined by the kernel
$\kappa_{Q_N}$ of the Kirchhoff operator $Q_N$.

\begin{lemma} \label{lemma:MT} \emph{\cite[Lemma 9.1]{MelroseTaylor85}}
The asymptotic behavior as $k \rightarrow \pm \infty$ of the total field $\eta(\alpha,k,x)$ obtained by inverse Fourier transformation of the kernel $\kappa_{Q_N}(\alpha,t,x)$
\begin{equation}
	\eta(\alpha,x,k)=\int e^{it k} \kappa_{Q_N}(\alpha,t,x) dt
\end{equation}
is determined by the singularities of the kernel $\kappa_{Q_N}$ modulo rapidly decreasing terms.
\end{lemma}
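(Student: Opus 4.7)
The plan is to invoke the classical microlocal principle that the Fourier transform of a tempered distribution at infinity is determined, modulo Schwartz remainders, by the singularities of that distribution. Concretely, I would split $\kappa_{Q_N}(\alpha, \cdot, x)$ into a piece localized near its singular support in $t$ (which drives the asymptotics) and a smooth tail (whose Fourier transform decays faster than any polynomial in $k$).

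First, fix $(\alpha, x)$ and view $\kappa_{Q_N}(\alpha, \cdot, x)$ as a tempered distribution in $t$, whose singular support in $t$ is a bounded set $S \subset \mathbb{R}$. Boundedness of $S$ follows from propagation of singularities for the wave equation in the exterior of the strictly convex (hence non-trapping) obstacle $K$: the only singular times occur when the incoming plane-wave front $\{t = \alpha \cdot x\}$ meets $x$ itself or the broken/diffracted bicharacteristics through $\partial K$ emanating from that front, and these events occur within a bounded $t$-window.

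Second, choose $\chi \in C_c^\infty(\mathbb{R})$ with $\chi \equiv 1$ on a neighborhood of $S$, and decompose
\[
	\kappa_{Q_N}(\alpha, t, x)
	= \chi(t) \, \kappa_{Q_N}(\alpha, t, x)
	+ \bigl(1 - \chi(t)\bigr) \, \kappa_{Q_N}(\alpha, t, x).
\]
Off $S$, the kernel is $C^\infty$ in $t$, and by the non-trapping estimates together with local energy decay for the Neumann wave problem in $\Omega$, the second summand lies in the Schwartz class in $t$, uniformly on compact sets in $(\alpha, x)$. Repeated integration by parts against $e^{itk}$ then yields
\[
	\Bigl| \int e^{itk} \bigl(1 - \chi(t)\bigr) \kappa_{Q_N}(\alpha, t, x) \, dt \Bigr|
	= \mathcal{O}(|k|^{-N})
\]
for every $N \in \mathbb{N}$, uniformly on such compacta, so this contribution is rapidly decreasing.

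Consequently
\[
	\eta(\alpha, x, k)
	= \int e^{itk} \chi(t) \, \kappa_{Q_N}(\alpha, t, x) \, dt
	+ \mathcal{O}(|k|^{-\infty}),
\]
and any two admissible cutoffs differ by a compactly supported $C^\infty$ function whose Fourier transform is Schwartz; hence the asymptotics of $\eta$ depend only on the germ of $\kappa_{Q_N}$ at $S$, i.e.\ only on its singularities. The main obstacle is the verification that the non-singular tail of $\kappa_{Q_N}$ has Schwartz decay in $t$ --- this is where non-trapping and the resulting (at least polynomial) local energy decay for the exterior Neumann problem must be invoked, and it is the geometric content of the strict convexity of $K$. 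Once this decay is in hand, the microlocal splitting above reduces the derivation of the expansion in Theorem~\ref{thm:asymp-exp} to analysis of the singular piece $\chi \, \kappa_{Q_N}$ via the diffractive parametrix constructions of \cite{MelroseTaylor85}.
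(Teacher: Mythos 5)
The paper states this lemma as a direct citation of \cite[Lemma 9.1]{MelroseTaylor85} and gives no proof, so there is no in-paper argument to compare with; what follows is an assessment of your proposal on its own terms. The decomposition and the underlying microlocal principle you invoke are correct, and your identification of the key burden --- verifying decay of the smooth tail --- is exactly right. The propagation-of-singularities step is also sound: strict convexity makes the exterior non-trapping, so the singular support of $t\mapsto\kappa_{Q_N}(\alpha,t,x)$ is indeed bounded, which legitimizes the cutoff.

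The one claim that needs adjusting is that $(1-\chi)\kappa_{Q_N}$ lies in the Schwartz class in $t$. In even space dimensions --- in particular $\mathbb{R}^2\setminus K$, the paper's main setting --- Huygens' principle fails and local energy decay is only algebraic, so the scattered-wave contribution to $\kappa_{Q_N}$ decays polynomially, not superpolynomially, and the Schwartz class is not available. This does not sink the argument; replace the overclaim by the weaker hypothesis that is actually needed and actually attainable: for every $j\ge 1$, $\partial_t^j\big[(1-\chi)\kappa_{Q_N}\big]\in L^1(\mathbb{R})$, uniformly on compacta in $(\alpha,x)$. Then integrating by parts $j$ times gives
\[
	\Bigl|\int e^{itk}\,(1-\chi)(t)\,\kappa_{Q_N}(\alpha,t,x)\,dt\Bigr|
	\lesssim |k|^{-j}\,\bigl\|\partial_t^j[(1-\chi)\kappa_{Q_N}]\bigr\|_{L^1},
\]
which is the required rapid decrease in $k$. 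The integrability gain under $\partial_t$ is genuine: each $t$-derivative of the wave kernel away from its singular support improves the algebraic decay rate, as one already sees for the free forward fundamental solution $H(t-|x|)/(2\pi\sqrt{t^2-|x|^2})$ in $\mathbb{R}^{1+2}$, whose $j$th $t$-derivative is $O(t^{-j-1})$ for fixed $x$. With this substitution your reduction to $\chi\,\kappa_{Q_N}$ closes, and the lemma follows.
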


The main goal is therefore the study of the kernel $\kappa_{Q_N}$. To this end, one first utilizes the general theory of
\emph{Fourier integral operators with folding canonical relations} \cite[\S5 and \S6]{MelroseTaylor85} 
to decompose the operator $Q_N$.

\begin{theorem} \emph{\cite[Theorem 8.30]{MelroseTaylor85}}
The Kirchhoff operator $Q_N$ can be expressed as
\begin{equation} \label{Q_N}
	Q_N = J_1D\mathcal{A}^{-1} J_2
\end{equation}
where $J_1$ and $J_2$ are elliptic Fourier integral operators of order zero, 
$D\in OPS^{-\frac{n}{2}-\frac{1}{6}}_{\frac{1}{3},0}$ has an asymptotic expansion 
\begin{equation} \label{eq:D}
	D \sim \sum_{r \in \mathbb{Z}_+, \, \ell \in -\mathbb{N}} A_{r,\ell} \Phi^{r,\ell}
\end{equation}
with $A_{r,\ell} \in S^{-\frac{n}{2}-\frac{\ell}{3}-\frac{r}{3}+(\ell+1)_-}_{cl}$ and
$\Phi^{r,\ell}(k^{-\frac{1}{3}}\xi_1)\in S^{(\frac{\ell}{3}-\frac{2r}{3})_{+}}_{\frac{1}{3},0} (\mathbb{R})$
$($see \emph{\cite[p.11-12]{Melrose78}}$)$ so that
\begin{equation}
	Dv(t,x)
	\sim \sum_{r \in \mathbb{Z}_+, \, \ell \in -\mathbb{N}}
	\int e^{i(x-y) \cdot \xi + i(t-t')k} a_{r,\ell}(t,x,k,\xi) \Phi^{r,\ell}(k^{-\frac{1}{3}}\xi_{1}) v(t',y) dt' dy dk d\xi
\end{equation}
where 
$(k,\xi)$ are variables dual to $(t,x)$, and
$a_{r,\ell} \in S^{-\frac{n}{2}-\frac{r}{3}-\frac{\ell}{3}+(\ell+1)_-}_{1,0}$
admits an asymptotic expansion 
\begin{equation}
	a_{r,\ell}(t,x,k,\xi)
	\sim \sum_{q \in \mathbb{Z}_+} k^{-\frac{n}{2}-q-\frac{r}{3}-\frac{\ell}{3}+(\ell+1)_{-}} \, a_{q,r,\ell}(t,x,\xi)
\end{equation}
wherein $a_{q,r,\ell}$ are $C^{\infty}$ functions uniformly bounded together with all their derivatives
$($cf. \emph{\cite[Definition 2.5.6]{Martinez02}}$)$,
and $\mathcal{A}^{-1}$ is the convolution operator defined by Fourier transformation as
\emph{\cite[Equation 1.36]{MelroseTaylor85}}
\[
	\widehat{\mathcal{A}^{-1} v(t,x)} (\xi) = \dfrac{\hat{v}(\xi)}{A_+(k^{-\frac{1}{3}}\xi_1)}
\]
where $A_+(z) = Ai(e^{\frac{2\pi i}{3}}z)$ and $Ai$ is the Airy function \emph{\cite{Melrose75}}.
\end{theorem}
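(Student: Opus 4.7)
The plan is to start from the representation $Q_N = (I + N^{-1}(\alpha \cdot \nu(x))\partial_t) F$ and analyze each factor in turn, with the principal effort going into the parametrix of the Neumann operator $N$ near the glancing set. Away from glancing rays, where $\alpha \cdot \nu(x) \neq 0$, the operator $N$ is classically elliptic pseudodifferential and $N^{-1}$ can be handled by standard symbolic calculus. The interesting behavior is concentrated at the shadow boundary, where the bicharacteristic flow of $\partial_{tt} - \Delta$ is tangent to the boundary and the canonical relation of $N^{-1}$ acquires a fold singularity.

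First, I would invoke the theory of Fourier integral operators with folding canonical relations from \cite[\S5, \S6]{MelroseTaylor85}. Using the strict convexity of $K$, the glancing set is a smooth submanifold of $T^{\ast}(\mathbb{R} \times B)$ along which the natural projection folds to first order. Applying an appropriate canonical transformation, one conjugates the local geometry to a model in which the relevant operator is built out of the Airy operator $\mathcal{A}$ defined via $\widehat{\mathcal{A} v}(\xi) = A_{+}(k^{-\frac{1}{3}}\xi_1)\hat{v}(\xi)$. This model parametrix factors as $J_1' \tilde{D} \mathcal{A}^{-1} J_2'$, modulo smoothing, with $J_1'$, $J_2'$ elliptic FIOs of order zero that encode the conjugation and with $\tilde{D}$ a pseudodifferential-like operator whose symbol is polyhomogeneous in $k^{-1/3}$ on the $S_{\frac{1}{3},0}$ scale inherent to the fold. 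This is the step where the factor $\mathcal{A}^{-1}$ in the statement arises, together with the non-standard Hörmander class.

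Next, I would compose this factorization with the Fourier integral operator $F$ defined by \eqref{FF}. Since $F$ is an elliptic FIO of order zero with canonical relation determined by the phase $t - \alpha\cdot x$, composition with $J_2'$ produces another elliptic FIO of order zero, which I would absorb into the definition of $J_2$. The pre-factor $I + N^{-1}(\alpha\cdot \nu(x))\partial_t$ then has to be pulled inside: the identity contributes a smoothing-level term that can be reabsorbed, while the multiplicative factor $\alpha\cdot \nu(x)$ vanishes precisely to first order at the glancing set, which is exactly what is needed to shift the Airy-type symbol expansion by the appropriate powers and to produce the sum over $r$ and $\ell$ in \eqref{eq:D}. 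Tracking the $\partial_t$ factor, which contributes a factor of $ik$ after Fourier transforming in $t$, together with the $\alpha\cdot \nu(x)$ vanishing, gives the exponents $-\frac{n}{2} - \frac{r}{3} - \frac{\ell}{3} + (\ell+1)_{-}$ in the symbol class of $a_{r,\ell}$ and the claimed polyhomogeneous expansion $a_{r,\ell} \sim \sum_q k^{-\frac{n}{2} - q - \frac{r}{3} - \frac{\ell}{3} + (\ell+1)_{-}} a_{q,r,\ell}$.

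The hard part, in my view, is the symbol bookkeeping at the fold: one must simultaneously track the classical symbol expansion of the interior parametrix (producing the $q$ summation), the Airy asymptotic expansion associated with $A_{+}$ (producing the $r$ summation via powers of $A_{+}'/A_{+}$), and the reabsorption of the pre-factor $(\alpha\cdot \nu(x))\partial_t$ which couples these two expansions and generates the $\ell$ summation with the threshold $(\ell+1)_{-}$. Verifying that the resulting operator $D$ really lies in $OPS^{-\frac{n}{2} - \frac{1}{6}}_{\frac{1}{3},0}$ and not merely in a worse class requires showing that the ostensibly divergent contributions near the glancing set cancel against matching contributions from $\mathcal{A}^{-1}$, which is precisely the content of the fold calculus in \cite[\S5, \S6]{MelroseTaylor85} and is the main technical point to be invoked rather than redone.
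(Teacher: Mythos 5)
The paper does not prove this statement. It is quoted verbatim, with citation tag, as Theorem~8.30 of Melrose and Taylor \cite{MelroseTaylor85} and is used as a black box. The surrounding appendix is explicit that its aim is to supply the \emph{downstream} steps Melrose and Taylor omitted in the Neumann case — applying the factorization $Q_N = J_1 D \mathcal{A}^{-1} J_2$ to the delta incidence, composing the symbols, commuting $P^{\#}$ through $\mathcal{A}^{-1}$, and running stationary phase to reach Theorem~\ref{thm:asymp-exp-local} — not to re-derive the factorization theorem itself. There is therefore no in-paper proof against which to compare your proposal; the paper would simply cite the result, as it does.

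On its own merits, your sketch correctly identifies the provenance of the result: the folding-canonical-relation calculus of \cite[\S5--\S6]{MelroseTaylor85}, conjugation near the glancing set to the Friedlander/Airy model, extraction of the $\mathcal{A}^{-1}$ factor, and composition with the elliptic FIO $F$ of \eqref{FF}. Two points in your narrative are, however, imprecise enough that I would flag them before you rely on them. First, you attribute the $\ell$-summation and the $(\ell+1)_{-}$ threshold principally to ``reabsorbing'' the prefactor $(\alpha\cdot\nu(x))\partial_t$. In Melrose--Taylor the $(r,\ell)$ double grading, and with it the correction $(\ell+1)_{-}$, is intrinsic to the class of Airy-type symbols $\Phi^{r,\ell}$ defined in \cite[p.11--12]{Melrose78} (essentially the algebra generated by $A_{+}'/A_{+}$ and its derivatives); the statement itself points to that reference for exactly this reason. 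The prefactor contributes the first-order vanishing at the glancing set (which is what ultimately yields the phase function $Z$ downstream) and one power of $k$ from $\partial_t$, but it is not the source of the $\ell$-grading. Second, calling the identity summand in $I + N^{-1}(\alpha\cdot\nu)\partial_t$ a ``smoothing-level term that can be reabsorbed'' is too quick: it produces the incident-wave contribution $F v$, which is an elliptic FIO of order zero and is very much not smoothing; showing that this piece genuinely fits under the same $J_1 D \mathcal{A}^{-1} J_2$ umbrella (and contributes to the $\ell=-1$ stratum rather than being negligible) is one of the non-trivial consistency checks in the original argument.
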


The Fourier integral operator $J_2$ is given by
\begin{align*}
	J_2 : \
	& \mathcal{E}'(\mathbb{R} \times \mathcal{S}^n) \to \mathcal{D}'(\mathbb{R} \times \mathbb{R}^n)
	\\
	& v(s,\alpha) \mapsto
	(J_2v)(t',y) = \int e^{i(y-\alpha) \cdot \xi-i(t'-s)k}
	a_{J_2}(s,\alpha,t',y) v(s,\alpha)
	ds d\alpha dk d\xi
\end{align*}
where  $a_{J_2} \in S^0_{1,0}$ ($a_{J_2}$ does not depend on $\xi $ and $k$ because it is a symbol of order $0$).
Applying the Dirac function at the base point $(0,\bar{\alpha})$ (see \cite{MelroseTaylor85,LazerguiBoubendir17}) 
yields
\begin{align}
	(J_2\delta_{(0,\bar{\alpha})})(t',y)
	\nonumber
	& = \int e^{i(y-\alpha) \cdot \xi-i(t'-s)k}
	a_{J_2}(s,\alpha,t',y)
	\delta_{(0,\bar{\alpha})}(s,\alpha)
	ds d\alpha dk d\xi
	\\
	& = \int e^{i(y-\bar{\alpha}) \cdot \xi -i t'k}
	a_{J_2}(\bar{\alpha},t',y) dk d\xi
	\nonumber
	\\
	& = \int e^{i(y-\bar{\alpha}) \cdot \xi -i t'k}
	a_{J_2}(\bar{\alpha},t',y)\widehat{\delta_{0}}(k,\xi)dk d\xi
	= (P\delta_{0})(t',y)
	\label{eq:Kdelta}
\end{align}
where the operator $P\in OPS^{-\frac{n}{2}+\frac{1}{6}}$ is specified by
\[
	(Pv)(t',y) = \int e^{i(y-\bar{\alpha}) \cdot \xi -i t'k}a_{J_2}(\bar{\alpha},y,t')\widehat{v}(k,\xi)dk d\xi.
\]
\noindent
Accordingly, use of \eqref{eq:Kdelta} in \eqref{Q_N} implies
\begin{equation} \label{QN operator}
	Q_N(\delta_{(0,\bar{\alpha})}(t,x))
	= J_1D\mathcal{A}^{-1}J_2(\delta_{(0,\bar{\alpha})}(t,x))
	= J_1D\mathcal{A}^{-1}P(\delta_{0}(t,x)).
\end{equation}
Finally, since $P\delta_0 = P^{\#}\delta_0 \mod OPS^{-\infty}$
and $P^{\#}$ commutes with $\mathcal{A}^{-1}$ \cite[p.295]{MelroseTaylor85},
\eqref{QN operator} can be rewritten as
\begin{equation} \label{eq:QNgood}
	Q_N(\delta_{(0,\bar{\alpha})}(t,x))
	= J_1DP^{\#}\mathcal{A}^{-1}(\delta_{0}(t,x))
\end{equation}
modulo rapidly decreasing terms.

In what follows, we briefly explain how representation \eqref{eq:QNgood} can be used
to express the amplitude associated with the kernel $\kappa_N$ as an asymptotic series of
oscillatory integrals each of which is amenable to an application of the stationary phase method \cite{Fedoryuk71}.

To this end, we first use \eqref{eq:D} to deduce for the composition $DP^{\#}$ of the pseudo-differential operators $D$ and $P^{\#}$
\begin{equation} \label{composition}
	DP^{\#}v(t,x)
	\sim \sum_{r \in \mathbb{Z}_+, \, \ell \in -\mathbb{N}}
	\int e^{i(x-y) \cdot \xi-i(t-t')k}a_{r,\ell}(t,x,k,\xi)
	\Phi^{r,\ell}(k^{-\frac{1}{3}} \xi_1)P^{\#}v(t',y)dt'dydk d\xi
\end{equation}
with
\begin{align}
	P^{\#}v(t',y)
	& = \int e^{i(y-\bar{\alpha}) \cdot \eta+it'\tau}
	p^{\#}(\bar{\alpha},t',y)
	\widehat{v}(\tau,\eta)
	d\tau d\eta
	\nonumber
	\\
	& = \int e^{i(y-z-\bar{\alpha}) \cdot \eta+i(t'-t'')\tau}p^{\#}(\bar{\alpha},t',y) v(t'',z)
	dt''dz d\tau d\eta
	\label{P2 definition}
\end{align}
where $p^{\#} \in S^0_{1,0}$,
and $(\tau,\eta)$ is the dual variable to $(t',y)$. Using \eqref{P2 definition} in \eqref{composition}, we therefore get 
\begin{equation} \label{eq:DPsharp}
	DP^{\#}v(t,x) \sim
	\sum_{r \in \mathbb{Z}_+, \, \ell \in -\mathbb{N}}
	\int e^{i(x-z) \cdot \xi +(t-t'')k-i\bar{\alpha} \cdot \xi}
	b_{r,\ell}(\bar{\alpha},t,x,t'',z,k,\xi)
	\Phi^{r,\ell}(k^{-\frac{1}{3}}\xi_1)v(t'',z)dt'' dz dk d\xi
\end{equation}
with 
\[
	b_{r,\ell}(\bar{\alpha},t,x,t'',z,k,\xi)
	= \int e^{i(y-z) \cdot (\eta-\xi)+i(t'-t'')(\tau-k)-i\bar{\alpha} \cdot (\eta-\xi)}a_{r,\ell}(x,t,\xi ,k)p^{\#}(\bar{\alpha},t',y)dt'dy d\tau d\eta.
\]

As for the composition of  the operator $DP^{\#}$ with the Fourier integral operator $J_1$ appearing in
\eqref{Q_N}, let us first note that
\begin{align}
	J_1 : \
	& \mathcal{D'}(\mathbb{R} \times \mathbb{R}^n) \to \mathcal{D}'(\mathbb{R}\times B)
	\nonumber
	\\
	& v(t',y) \mapsto J_1v(t,x)
	= \int e^{i\psi_1(x,\tau,\eta)+i\tau(t-t')-iy \cdot \eta} a_{J_1}(x,t',y)v(t',y) dt'dy d\tau d\eta
	\label{eq:J}
\end{align}
where $(\tau,\eta)$ are variables dual to $(t,x)$, $a_{J_1} \in S^0_{1,0}$, and the phase function $\psi_1$
is defined in a neighborhood of the base point as \cite[Equations 7.11 and 7.13]{MelroseTaylor85}
\begin{equation} \label{eq:psi1}
	\psi_1(x,\tau,\eta) =
	-\frac{|\eta'|^2}{2\tau}-\frac{|x'|^2\tau}{2} - 
	\left\{
		\begin{array}{cl}
			\frac{3}{2}
			(-\eta_1 \tau^{-\frac{1}{3}})^{\frac{3}{2}}
			\sign (\alpha \cdot \nu(x)),
			& \text{if } \alpha \cdot \nu(x) \ne 0,
			\\
			0,
			& \text{otherwise}.
		\end{array}
	\right.
\end{equation}
In \eqref{eq:psi1}, we have used the notation $x'=(x_2,...,x_n)$ for $x = (x_1,\ldots,x_n) \in B$
and similarly for $\eta \in \mathbb{R}^n$. Combining \eqref{eq:DPsharp} with \eqref{eq:J}, we obtain
\begin{align*}
	J_1DP^{\#}v(t,x)
	& \sim \sum_{r \in \mathbb{Z}_+, \, \ell \in -\mathbb{N}}
	\int e^{i\psi_1(x,\tau,\eta)+i\tau(t-t')-iy \cdot \eta} a_{J_1}(x,t',y) 
	\Big[ \int e^{i(y-z)\xi +i(t'-t'')k -i\bar{\alpha} \cdot \xi}
	\\
	& \hspace{3.495cm}
	b_{r,\ell}(\bar{\alpha},t',y,t'',z,k,\xi)
	\Phi^{r,\ell}(k^{-\frac{1}{3}}\xi_1) v(t'',z)
	dt''dz dk d\xi \Big]
	dt' dy d\tau d\eta,
\end{align*}
and we rewrite this as
\begin{align}
	J_1DP^{\#}v(t,x)
	& \sim \sum_{\ell \in -\mathbb{N}, \ r \in \mathbb{Z}_+}
	\int e^{i\psi_1(x,k,\xi)-i\bar{\alpha} \cdot \xi -itk -iz\xi -it'' k}
	\nonumber
	\\
	& \hspace{4.3cm}
	q_{N_{r,\ell}}(\bar{\alpha},x,t'',z,k,\xi)
	\Phi^{r,\ell}(k^{-\frac{1}{3}}\xi_1) v(t'',z)dt''dz dk d\xi
	\label{QDPA}
\end{align}
where
\begin{align*}
	q_{N_{r,\ell}}(\bar{\alpha},x,t'',z,k,\xi)
	= a_{J_1}\# b_{r,\ell}(\bar{\alpha},x,t'',z,k,\xi)
	& = \int e^{i\psi_1(x,\tau,\eta)-i\psi_1(x,k,\xi)+i(k-\tau)(t'-t) +i(\xi-\eta) \cdot y}
	\\
	& \hspace{1.2cm}
	a_{J_1}(x,t',y)
	b_{r,\ell}(\bar{\alpha},t',y,t'',z,k,\xi)
	dt'dy d\tau d\eta.
\end{align*}
In light of \eqref{eq:QNgood}, substituting \cite[p.295]{MelroseTaylor85}
\begin{equation}
	\mathcal{A}^{-1}\delta_{0}(t'',z)
	= \int e^{i\xi \cdot z+ik t''} \frac{1}{A_+(k^{-\frac{1}{3}}\xi_1)} dk d\xi
\end{equation}
for $v$ in \eqref{QDPA}, we get
\begin{align}
	Q_N\delta_{(0,\bar{\alpha})}(t,x)
	& \sim
	\sum_{r \in \mathbb{Z}_+, \, \ell \in -\mathbb{N}}
	\int e^{i\psi_1(x,k,\xi)-i\bar{\alpha} \cdot \xi -itk -iz \cdot \xi- it''k}
	\nonumber
	\\
	& \hspace{3cm}
	q_{N_{r,\ell}}(\bar{\alpha},x,t'',z,k,\xi)
	\Phi^{r,\ell}(k^{-\frac{1}{3}}\xi_1)
	\mathcal{A}^{-1}\delta_{0}(t'',z)dt''dz dk d\xi
	\nonumber
	\\
	& \sim
	\sum_{r \in \mathbb{Z}_+, \, \ell \in -\mathbb{N}}
	\int e^{i\psi_1(x,k,\xi)-i\bar{\alpha} \cdot \xi -itk}
	q_{N_{r,\ell}}(\bar{\alpha},x,k,\xi) \frac{\Phi^{r,\ell}(k^{-\frac{1}{3}}\xi_1)}{A_+(k^{-\frac{1}{3}}\xi_1)}dk d\xi
	\label{Q_N-composi}
\end{align}
where 
$q_{N_{r,\ell}}(\bar{\alpha},x,k,\xi)= a_J\# b_{r,\ell}(\bar{\alpha},x,k,\xi)$.

For the kernel $\kappa_{Q_N}$, we accordingly have 
\begin{equation*} \label{eq:QNasy}
	\kappa_{Q_N}(\alpha,t,x)
	\sim
	\sum_{r \in \mathbb{Z}_+, \, \ell \in -\mathbb{N}}
	\int e^{i\psi_1(x,k,\xi)-i\bar{\alpha} \cdot \xi -itk}
	q_{N_{r,\ell}}(\bar{\alpha},x,k,\xi)
	\frac{\Phi^{r,\ell}(k^{-\frac{1}{3}}\xi_1)}{A_+(k^{-\frac{1}{3}}\xi_1)}
	dk d\xi 
\end{equation*}
and, in virtue of Lemma~\ref{lemma:MT}, we deduce
(see \cite[p.298]{MelroseTaylor85})
\begin{equation} \label{eq:aqdouble}
	\eta(\alpha,x,k)
	\sim \sum_{r \in \mathbb{Z}_+, \, \ell \in - \mathbb{N}}
 	\int e^{i\psi_1(x,\xi,k)-i\bar{\alpha} \cdot \xi}
	q_{N_{r,\ell}}(\bar{\alpha},x,k,\xi)
	\frac{\Phi^{r,\ell}(k^{-\frac{1}{3}}\xi_1)}{A_+(k^{-\frac{1}{3}}\xi_1)}
	d\xi.
\end{equation}
Making the change of variable $\xi=k \zeta$ in \eqref{eq:aqdouble}, 
we therefore find \cite[p.298]{MelroseTaylor85}
\begin{equation} \label{eq:aqcv}
	\eta(\alpha,x,k)
	\sim \sum_{r \in \mathbb{Z}_+, \, \ell \in - \mathbb{N}}
 	\int e^{ik \psi_2(\alpha, x,\zeta)}
	q_{N_{r,\ell}}(\bar{\alpha},x,k,\zeta)
	\frac{\Phi^{r,\ell}(k^{\frac{2}{3}}\zeta_1)}{A_+(k^{\frac{2}{3}}\zeta_1)}
	d\zeta
\end{equation}
where $\psi_2(\alpha, x,\zeta) = \psi_1(x,1,\zeta)-\bar{\alpha} \cdot \zeta$, and with abuse of notation we have written
$q_{N_{r,\ell}}(\bar{\alpha},x,k,\zeta)$ for $k^n q_{N_{r,\ell}}(\bar{\alpha},x,k,\zeta)$.
In this case, $q_{N_{r,\ell}}$ is a symbol of order
$\frac{n}{2}+\frac{1}{6}-\frac{r}{3}-\frac{\ell}{3}+(\ell+1)_{-}$
so that 
\begin{equation} \label{eq:aqcvsym}
	q_{N_{r,\ell}}(\bar{\alpha},x,k,\zeta)
	\sim \sum_{q \in \mathbb{Z}_+}
	k^{\frac{n}{2}+\frac{1}{6}-q-\frac{r}{3}-\frac{\ell}{3}+(\ell+1)_{-}}
	q_{N_{q,r,\ell}}(\bar{\alpha},x,\zeta).
\end{equation}
Using \eqref{eq:aqcvsym} in \eqref{eq:aqcv}, we arrive at
\begin{equation} \label{eq:aqsimple}
	\eta(\alpha,x,k)
	\sim \sum_{q,r \in \mathbb{Z}_+ \atop \ell \in -\mathbb{N}}
	k^{\frac{n}{2}+\frac{1}{6}-q-\frac{r}{3}-\frac{\ell}{3}+(\ell+1)_{-}}
 	\int e^{ik\psi_2(\alpha, x,\zeta)}
	q_{N_{q,r,\ell}}(\bar{\alpha},x,\zeta)
	\frac{\Phi^{r,\ell}(k^{\frac{2}{3}}\zeta_1)}{A_+(k^{\frac{2}{3}}\zeta_1)}
	d\zeta. 
\end{equation}
In order to further simplify \eqref{eq:aqsimple}, we introduce the function
\begin{equation} \label{eq:Phiklasymp}
	\Psi^{r,\ell}(\tau)
	= e^{-\frac{i\tau^{3}}{3}} \int \frac{\Phi^{r,\ell}(s)}{A_+(s)}e^{-is\tau}ds.
\end{equation}
The symbolic behavior of $\Psi^{r,\ell}$ is as follows.

\begin{lemma} \label{eq:MT9.34} \emph{\cite[Lemma 9.34]{MelroseTaylor85}}
The function $\Psi^{r,\ell}$ defined in \eqref{eq:Phiklasymp} belongs to $S^{1+\ell-2r}(\mathbb{R})$, admits an asymptotic expansion
\begin{equation} \label{eq:Phikl}
	\Psi^{r,\ell}(\tau)
	\sim \sum_{j \in \mathbb{Z}_+} \alpha_{\ell,r,j} \tau^{1+\ell-2r-3j}
\end{equation}
as $\tau \to +\infty$, and is rapidly decreasing in the sense of Schwarz as $\tau \to -\infty$.
\end{lemma}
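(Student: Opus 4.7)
Write $\Psi^{r,\ell}(\tau) = e^{-i\tau^{3}/3}\int(\Phi^{r,\ell}(s)/A_+(s))\,e^{-is\tau}\,ds$, so $\Psi^{r,\ell}$ is a Fourier transform modulated by a cubic phase, and the claim reduces to a stationary-phase analysis. I would start from the classical asymptotics of $A_+(s)=Ai(e^{2\pi i/3}s)$ on the real line: the zeros of $A_+$ lie off $\mathbb{R}$, $A_+$ is exponentially dominant on one half-line and oscillatory on the other, and on the oscillatory side $1/A_+(s) \sim C\,|s|^{1/4}e^{\mp i(2/3)|s|^{3/2}}\,(1 + O(|s|^{-3/2}))$ in the classical-symbol sense; on the other side $1/A_+$ is Schwartz. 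A smooth cutoff peels off the Schwartz piece, whose Fourier transform is itself Schwartz in $\tau$ and feeds only into the rapid-decay remainder, and reduces the claim to estimating
\[
I(\tau) = \int p(s)\,e^{i\phi(s,\tau)}\,ds,\qquad \phi(s,\tau) = \pm\tfrac{2}{3}s^{3/2}-s\tau,
\]
where the amplitude $p$ admits a complete classical expansion $p(s)\sim\sum_j c_j\,s^{\mu-3j/2}$ at infinity, with leading exponent $\mu$ determined by the symbol order of $\Phi^{r,\ell}$ combined with the $s^{1/4}$ factor from $1/A_+$.

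The heart of the proof is the stationary-phase analysis of $I(\tau)$. For the sign of $\tau$ that produces a real critical point, $\phi_s$ vanishes at a unique nondegenerate point $s_0(\tau) = \tau^2$ with $|\phi(s_0)| = \tau^{3}/3$ and $\phi_{ss}(s_0) \asymp \tau^{-1}$; the cubic prefactor $e^{-i\tau^{3}/3}$ in the definition of $\Psi^{r,\ell}$ was engineered precisely to cancel $e^{i\phi(s_0)}$. The classical stationary-phase expansion then yields
\[
\Psi^{r,\ell}(\tau)\ \sim\ \sqrt{\tfrac{2\pi}{|\phi_{ss}(s_0)|}}\,\sum_{j\geq 0}\gamma_j\,p^{(2j)}(s_0)\,\tau^{-j},
\]
and substituting $s_0 = \tau^2$ into the classical series for $p$ collapses each term into a power of the form $\tau^{1+\ell-2r-3j}$: the factor $\sqrt{1/|\phi_{ss}(s_0)|} \asymp \tau^{1/2}$ combines with $p(\tau^2)\sim\tau^{2\mu}$ to give the leading exponent $1+\ell-2r$, and the natural small parameter $s^{-3/2}|_{s=\tau^2}=\tau^{-3}$ forces each subsequent term to drop the exponent by exactly $3$, reproducing \eqref{eq:Phikl}. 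For the opposite sign of $\tau$, $\phi_s$ is bounded below on the support of the amplitude by $\gtrsim |\tau|$, so iterating the non-stationary operator $L = (i\phi_s)^{-1}\partial_s$ gives decay faster than any polynomial, yielding the claimed Schwartz behavior.

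The main obstacle will be upgrading this pointwise asymptotic to the symbolic statement $\Psi^{r,\ell}\in S^{1+\ell-2r}(\mathbb{R})$, which demands the uniform bound $|\partial_\tau^n\Psi^{r,\ell}(\tau)|\lesssim \langle\tau\rangle^{1+\ell-2r-n}$ for every $n$ and every $\tau\in\mathbb{R}$. Each $\tau$-derivative either pulls down a factor $-is$ from $e^{-is\tau}$ (shifting the amplitude and hence $\mu$) or differentiates the cubic prefactor $e^{-i\tau^{3}/3}$; after cancellation of the leading critical-point phase both mechanisms combine to reduce the symbol exponent by exactly $n$, but keeping the remainder estimates uniform in $n$ requires careful bookkeeping at each order of the stationary-phase expansion. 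A second delicate ingredient is the verification that $\Phi^{r,\ell}/A_+$ really admits the stated classical asymptotic expansion on the oscillatory half-line, which amounts to a symbol-division lemma: one divides the classical symbol $\Phi^{r,\ell}$ by the oscillatory factor $A_+$, whose amplitude is bounded away from zero for large $|s|$, and shows that the quotient is again a classical symbol modulated by the cubic oscillation. Together these two points constitute the technical heart of the argument.
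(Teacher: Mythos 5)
This lemma is stated by the paper as a citation of \cite[Lemma~9.34]{MelroseTaylor85}: the paper gives no proof of its own, so there is nothing internal to compare your argument against, and any evaluation has to be on the internal consistency of your sketch. On that score, your approach is the natural one and is in the spirit of the Fourier--Airy analysis in Melrose--Taylor: split $1/A_{+}$ into the Schwartz half-line and the oscillatory half-line, observe that the cubic prefactor $e^{-i\tau^{3}/3}$ exactly cancels the critical value of the phase of the oscillatory piece, and do stationary phase. The absence of a real critical point for the other sign of $\tau$ gives the Schwartz decay; the presence of a unique nondegenerate critical point $s_{0}=\tau^{2}$ for the other sign gives the expansion; and the count $1+\ell-2r$ for the leading power together with the $\tau^{-3}$ drop between terms is, as you observe, forced by the interplay of $|\phi_{ss}(s_0)|^{-1/2}\asymp\tau^{1/2}$, $p(\tau^2)$, and the $|s|^{-3/2}$ step in the Airy asymptotics.

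One concrete slip in the intermediate display: the stationary-phase correction series does not go as $\sum_j \gamma_j\,p^{(2j)}(s_0)\,\tau^{-j}$. The effective large parameter after rescaling $s=\tau^2(1+v)$ is $\lambda=\tau^{3}$, and the corrections are weighted by powers of $(\phi_{ss}(s_0))^{-1}\asymp\tau^{+1}$, not $\tau^{-1}$; since $p^{(2j)}(s_0)\asymp\tau^{2\mu_0-4j}$ this is exactly what produces the $\tau^{-3j}$ steps. Your written formula would give steps of $\tau^{-5}$, yet your stated conclusion is the correct $\tau^{-3}$, so this is presumably a transcription error rather than a conceptual one, but as written the middle display is inconsistent with the final claim. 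The two points you flag at the end — promoting the pointwise expansion to a genuine $S^{1+\ell-2r}(\mathbb{R})$ bound uniformly in all derivatives, and the symbol-division fact that $\Phi^{r,\ell}/A_+$ has the required polyhomogeneous structure on the oscillatory half-line — are indeed where the technical work lies, and neither is carried out here; your proposal correctly identifies but does not close those gaps.
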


Rewriting \eqref{eq:Phiklasymp} as 
\[
	e^{\frac{i\tau^{3}}{3}}\Psi^{r,\ell}(\tau)
	=\int \frac{\Phi^{r,\ell}(s)}{A_+(s)}e^{-is\tau}ds
	=\widehat{\Big( \frac{\Phi^{r,\ell}}{A_+} \Big)}(\tau),
\]
and using $\mathcal{F}^{-1}$ to denote the inverse Fourier transform, we obtain
\begin{align}
	\frac{\Phi^{r,\ell}(k^{\frac{2}{3}}\zeta_1)}{A_+(k^{\frac{2}{3}}\zeta_1)}
	& = \mathcal{F}^{-1} \Big( \widehat{\Big( \frac{\Phi^{r,\ell}}{A_+} \Big)} (\tau) \Big) (k^{\frac{2}{3}}\zeta_1)
	= \mathcal{F}^{-1} \Big( e^{\frac{i\tau^{3}}{3}}\Psi^{r,\ell}(\tau) \Big) (k^{\frac{2}{3}}\zeta_1)
	\nonumber
	\\
	& = \int e^{ik^{\frac{2}{3}}\zeta_1 \tau} e^{\frac{i\tau^{3}}{3}} \Psi^{r,\ell}(\tau) d\tau 
	= k^{\frac{1}{3}}
	\int e^{ik \zeta_1 t + ik\frac{t^3}{3}}
	\Psi^{r,\ell}(k^{\frac{1}{3}}t)dt
	\label{function Psi}.
\end{align}
Using \eqref{function Psi} in \eqref{eq:aqsimple}, we finally conclude
\begin{equation}
	\eta(\alpha,x,k)
	\sim k^{\frac{1}{3}}
	\sum_{q,r \in \mathbb{Z}_+ \atop \ell \in -\mathbb{N}}
	k^{\frac{n}{2}+\frac{1}{6}-q-\frac{r}{3}-\frac{\ell}{3}+(\ell+1)_{-}} \,
	I_{q,r,\ell}(\alpha,x,k)\label{sum-aQN-2}
\end{equation}
where 
\begin{eqnarray}
	I_{q,r,\ell}(\alpha,x,k)
	= \int e^{ik \psi_3(\zeta ,t)}
	q_{N_{q,r,\ell}}(\bar{\alpha},x,\zeta)
	\Psi^{r,\ell}(k^{\frac{1}{3}}t)
	d\zeta dt
\end{eqnarray}
with the phase function given by
\begin{equation}
	\psi_3(\zeta ,t)
	= \psi_2(\alpha, x,\zeta) + t\zeta_1 + \frac{t^3}{3}.
\end{equation}

As shown in \cite{MelroseTaylor85}, the integrals $I_{q,r,\ell}(\alpha,k,x)$ can be treated using the stationary phase
method which results in
\begin{equation}\label{I-sum}
	I_{q,r,\ell}(\alpha,x,k)
	\sim k^{-\frac{1}{3}}
	\sum_{p \in \mathbb{Z}_+}
	k^{-\frac{n+1}{2}-\frac{2p}{3}}
	\, a_{p,q,r,\ell}(\alpha,x)
	\, (\Psi^{r,\ell})^{(p)}(k^{\frac{1}{3}} Z(\alpha,x))
	\, e^{ik \alpha \cdot x},
\end{equation}
and this leads into the following for the envelope $\eta^{\rm slow}(\alpha,k,x) = e^{-ik \alpha \cdot x} \eta(\alpha,k,x)$.

\begin{theorem} \label{thm:asymp-exp-local} \emph{\cite[Theorem 9.36]{MelroseTaylor85}}
In a vicinity of the shadow boundary $\{ x \in B : \alpha \cdot \nu(x) = 0 \}$, $\eta^{\rm slow}(\alpha,k,x)$ belongs to the
H\"{o}rmander class $S^{0}_{\frac{2}{3},\frac{1}{3}}$ and admits an asymptotic expansion
\begin{equation}
\label{eq:MT85Neumannlocal}
	\eta^{\rm slow}(\alpha,x,k)
	\sim
	\sum_{q,p,r \in \mathbb{Z}_+ \atop \ell \in -\mathbb{N}}
	a_{p,q,r,\ell} (\alpha,x,k)
\end{equation}
with
\[
	a_{p,q,r,\ell} (\alpha,x,k)
	= k^{-\frac{1}{3}-\frac{2p}{3}-q-\frac{r}{3}-\frac{\ell}{3}+(\ell+1)_-} \,
	b_{p,q,r,\ell} (\alpha,x) \,
	(\Psi^{r,\ell})^{(p)}(k^{\frac{1}{3}}Z(\alpha,x))
\]
where $b_{p,q,r,\ell}$ are complex-valued $C^{\infty}$ functions, $Z$ is a
real-valued $C^{\infty}$ function that is positive on the illuminated region $\{ x \in B : \alpha \cdot \nu(x) < 0 \}$,
negative on the shadow region $\{ x \in B : \alpha \cdot \nu(x) >0 \}$, and that vanishes precisely to first order at the shadow
boundary.
\end{theorem}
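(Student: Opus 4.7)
The plan is to deduce the theorem from the expansion \eqref{sum-aQN-2} already established in the excerpt by applying the stationary phase method to each oscillatory integral $I_{q,r,\ell}(\alpha,x,k)$ to obtain the asymptotic formula \eqref{I-sum}, and then collecting powers of $k$. Substituting \eqref{I-sum} into \eqref{sum-aQN-2} and factoring out $e^{ik\alpha\cdot x}$ gives, after adding the exponents
\[
	\tfrac{n}{2}+\tfrac{1}{6}-q-\tfrac{r}{3}-\tfrac{\ell}{3}+(\ell+1)_{-} \ +\ \tfrac{1}{3}\ +\ \bigl(-\tfrac{1}{3}-\tfrac{n+1}{2}-\tfrac{2p}{3}\bigr)
	\ =\ -\tfrac{1}{3}-\tfrac{2p}{3}-q-\tfrac{r}{3}-\tfrac{\ell}{3}+(\ell+1)_{-},
\]
precisely the exponent of $k$ appearing in $a_{p,q,r,\ell}$. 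So the whole game is to prove \eqref{I-sum} and to check the H\"ormander class.

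The stationary phase analysis of $I_{q,r,\ell}$ is non-standard because the factor $\Psi^{r,\ell}(k^{1/3}t)$ varies on a different scale than the phase; I would begin by rescaling $t=k^{-1/3}s$, which turns the integral into
\[
	k^{-\frac{1}{3}}\int e^{ik\psi_{2}(\alpha,x,\zeta)+ik^{2/3}s\zeta_{1}+is^{3}/3}\ q_{N_{q,r,\ell}}(\bar{\alpha},x,\zeta)\ \Psi^{r,\ell}(s)\ d\zeta\, ds,
\]
so that the $s$-factor $\Psi^{r,\ell}$ is no longer oscillatory on the scale of $k$. I would then apply the standard stationary phase method in $\zeta$ alone, using that $\nabla_{\zeta}\psi_{2}(\alpha,x,\zeta)=0$ has a non-degenerate solution $\zeta^{*}(x)$ near the shadow boundary (this is where the strict convexity of $K$ enters, via the Hessian of $\psi_{1}$ computed in \eqref{eq:psi1}); the perturbation by $k^{-1/3}s\zeta_{1}$ shifts the critical point to $\zeta^{*}(x)+O(k^{-1/3}s)$ and contributes a critical value of the form $\alpha\cdot x+k^{-1/3}sZ(\alpha,x)+O(k^{-2/3}s^{2})$, where $Z(\alpha,x)$ is precisely the first-order coefficient, a smooth function vanishing to first order on the shadow boundary and with the correct sign on either side. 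Taylor-expanding the amplitude and the higher-order terms of the phase about $\zeta^{*}(x)$ and using the identity
\[
	(-i\partial_{\tau})^{p}\,\Psi^{r,\ell}(\tau)\ =\ \int (-s)^{p}e^{is\tau}\,\widecheck{\Psi^{r,\ell}}(s)\,ds\ \cdot\ (\text{const.})
\]
(together with the Fourier characterization \eqref{eq:Phiklasymp} of $\Psi^{r,\ell}$), each power $s^{p}$ in the Taylor expansion turns the remaining $s$-integral into a derivative $(\Psi^{r,\ell})^{(p)}(k^{1/3}Z(\alpha,x))$, accompanied by a prefactor $k^{-(n+1)/2-2p/3}$ coming from the Gaussian integration and the scaling; this is exactly \eqref{I-sum}, with $a_{p,q,r,\ell}(\alpha,x)=b_{p,q,r,\ell}(\alpha,x)\,q_{N_{q,r,\ell}}(\bar{\alpha},x,\zeta^{*}(x))\cdot(\text{curvature factors})$ absorbed into the smooth function $b_{p,q,r,\ell}$.

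The remaining task is the H\"ormander class statement. Since each summand $a_{p,q,r,\ell}$ is smooth in $x$ and since the derivatives of $(\Psi^{r,\ell})^{(p)}(k^{1/3}Z(\alpha,x))$ in $x$ pick up factors of $k^{1/3}$ and those in $k$ pick up factors of $k^{-2/3}$ (upon applying Lemma \ref{eq:MT9.34} and the chain rule—this is the same calculation that appears in Lemma \ref{lemma:generalderivatives}), one checks that every term $a_{p,q,r,\ell}$ lies in some class $S^{\vartheta(p,q,r,\ell)}_{2/3,1/3}$ with $\vartheta(p,q,r,\ell)\le 0$, so the asymptotic sum defines an element of $S^{0}_{2/3,1/3}$. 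The main obstacle in this program is executing the perturbed stationary phase in $\zeta$ in such a way that the resulting $s$-integrals can be explicitly re-identified as derivatives of $\Psi^{r,\ell}$ at the point $k^{1/3}Z(\alpha,x)$; once that algebraic bookkeeping is in place, the rest is routine asymptotic expansion and uniform-in-$k$ symbol estimates, locally near each point of the shadow boundary where $Z$ vanishes non-degenerately.
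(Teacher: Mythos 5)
Your plan matches the paper's combining step precisely: the exponent arithmetic recovering $-\frac{1}{3}-\frac{2p}{3}-q-\frac{r}{3}-\frac{\ell}{3}+(\ell+1)_{-}$ from the product of \eqref{sum-aQN-2} and \eqref{I-sum} is exactly what the paper does, and it is correct. The real divergence from the paper is that you \emph{attempt} to establish \eqref{I-sum} by stationary phase, whereas the paper outsources that step entirely to Melrose--Taylor with the phrase ``As shown in \cite{MelroseTaylor85}\dots results in \eqref{I-sum}'' and then states Theorem~\ref{thm:asymp-exp-local} as a citation of their Theorem~9.36. What you gain by filling in this step is a self-contained argument; what you lose is rigor, because two points in your sketch are elided rather than proved.

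First, the claim that ``$\nabla_\zeta\psi_2(\alpha,x,\zeta)=0$ has a non-degenerate solution $\zeta^*(x)$ near the shadow boundary'' glosses over the singular structure of $\psi_1$ in \eqref{eq:psi1}: the term $-\frac{3}{2}(-\zeta_1\tau^{-1/3})^{3/2}\sign(\alpha\cdot\nu(x))$ gives $\partial_{\zeta_1}^2\psi_2\propto(-\zeta_1)^{-1/2}$, which blows up as the $\zeta_1$-critical point collides with the origin --- precisely the folding caustic that forces the Airy-type treatment. The cubic $t$- (resp.\ $s$-) phase introduced via \eqref{function Psi} is not an incidental rescaling convenience but the unfolding of that fold; the critical point analysis must therefore be carried out jointly in $(\zeta_1,s)$, or at least with explicit control of how the $\zeta_1$-stationary point depends on $s$, to get the $k^{-2p/3}$ scaling and, more importantly, to produce a $\zeta^*$ that remains well-defined \emph{across} the shadow boundary. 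Second, the re-identification of the $s$-integrals as $(\Psi^{r,\ell})^{(p)}(k^{1/3}Z)$ is not a direct application of the generic identity $(-i\partial_\tau)^p\Psi^{r,\ell}(\tau)=\int(-s)^p e^{is\tau}\widecheck{\Psi^{r,\ell}}(s)\,ds$: the Fourier transform that actually appears in \eqref{eq:Phiklasymp} and \eqref{function Psi} relates $\Psi^{r,\ell}$ to $\Phi^{r,\ell}/A_+$ with an additional Airy twist $e^{\pm i\tau^3/3}$, not to $\widecheck{\Psi^{r,\ell}}$, so the ``algebraic bookkeeping'' you flag as the main obstacle is indeed where the genuine work lies and cannot be dismissed as routine. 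In short, your proposal is a correct and useful roadmap, but as a proof it leaves open exactly the step \eqref{I-sum} that the paper itself, consciously, only cites.
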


Under certain assumptions, Theorem~\ref{thm:asymp-exp-local} is in fact valid over the entire boundary $B$. This is given in the next theorem
where we use the notation $B^{\epsilon}_{\gtrless} = \{ x \in B: \alpha \cdot \nu(x) \gtrless \epsilon\}$.

\begin{theorem} \label{thm:asymp-exp-global}
Assume there exists $\epsilon \in (0,1)$ such that on $B^{\epsilon}_{<}$ the envelope $\eta^{\rm slow}$ belongs to
$S^0_{1,0}(B^{\epsilon}_{<} \times (0,\infty))$
and admits an asymptotic expansion
\begin{equation} \label{eq:ilasymp}
	\eta^{\rm slow}(\alpha,x,k)
	\sim \sum_{j \in \mathbb{Z}_+} k^{-j} \, a_j(\alpha,x), 
	\quad
	\text{as } k \to \infty,
\end{equation}
and it is rapidly decreasing in the sense of Schwarz on $B^{\epsilon}_{>}$ as $k \to -\infty$. Then 
$\eta^{\rm slow} \in S^{0}_{\frac{2}{3},\frac{1}{3}}(B \times (0,\infty))$ and the asymptotic expansion
\eqref{eq:MT85Neumannlocal} is valid over the entire boundary $B$.
\end{theorem}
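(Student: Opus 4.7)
The strategy is to patch the local expansion of Theorem \ref{thm:asymp-exp-local}, valid in a neighborhood of the shadow boundary, with the hypothesized behavior away from it, using a smooth partition of unity on $B$. First I would choose $0 < \epsilon'' < \epsilon' < \epsilon$ and a smooth partition of unity $\{\chi_-, \chi_0, \chi_+\}$ subordinate to the open cover
\[
    U_- = B^{\epsilon''}_{<}, \qquad U_0 = \{x \in B : |\alpha \cdot \nu(x)| < \epsilon\}, \qquad U_+ = B^{\epsilon''}_{>},
\]
with $\operatorname{supp}\chi_0 \subset \{|\alpha \cdot \nu| < \epsilon'\}$, so that the shadow boundary lies inside $\operatorname{supp}\chi_0$ and the supports of $\chi_\pm$ meet $U_0$ only where the hypotheses of both the local theorem and of this theorem hold simultaneously. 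Writing $\eta^{\rm slow} = \chi_- \eta^{\rm slow} + \chi_0 \eta^{\rm slow} + \chi_+ \eta^{\rm slow}$, I would analyze each summand separately and check that their combined asymptotic behavior reproduces the ansatz \eqref{eq:MT85Neumannlocal} globally.

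For the middle piece, Theorem \ref{thm:asymp-exp-local} directly yields the required expansion of $\chi_0 \eta^{\rm slow}$, with amplitudes $\chi_0 \, b_{p,q,r,\ell}$ which are smooth and compactly supported in $U_0$. For the shadow-region piece $\chi_+ \eta^{\rm slow}$, the hypothesis of Schwartz decay forces it to lie in $S^{-\infty}$; moreover, on $\operatorname{supp}\chi_+$ we have $Z < -Z_0 < 0$ so that $k^{1/3}Z \to -\infty$, and by Lemma \ref{eq:MT9.34} each $(\Psi^{r,\ell})^{(p)}(k^{1/3}Z)$ is itself Schwartz in $k$. Hence any global ansatz automatically matches on $U_+$ modulo $S^{-\infty}$.

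The non-trivial step is the illuminated piece $\chi_- \eta^{\rm slow}$, which by hypothesis admits the classical expansion \eqref{eq:ilasymp}. On $\operatorname{supp}\chi_-$ we have $Z > Z_0 > 0$, so $k^{1/3} Z \to +\infty$, and term-by-term differentiation of the asymptotic series in Lemma \ref{eq:MT9.34} gives
\[
    (\Psi^{r,\ell})^{(p)}(k^{1/3} Z) \sim \sum_{j \in \mathbb{Z}_+} \tilde{\alpha}_{r,\ell,j,p}\,(k^{1/3} Z)^{1+\ell-2r-p-3j}.
\]
Substituting into the definition of $a_{p,q,r,\ell}$ and collecting powers of $k$, each $a_{p,q,r,\ell}$ reduces on $\operatorname{supp}\chi_-$ to a classical symbol of the form
\[
    a_{p,q,r,\ell}(\alpha,x,k) \sim \sum_{j \in \mathbb{Z}_+} k^{-(p+q+r+j)+(\ell+1)_-}\, c_{p,q,r,\ell,j}(\alpha,x)\, Z(\alpha,x)^{1+\ell-2r-p-3j}
\]
modulo $S^{-\infty}$. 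As $(p,q,r,\ell,j)$ varies over $\mathbb{Z}_+^3 \times (-\mathbb{N}) \times \mathbb{Z}_+$, the exponent $-(p+q+r+j)+(\ell+1)_-$ ranges over all of $-\mathbb{Z}_+$; hence the classical series \eqref{eq:ilasymp} can be realized as such a double sum by choosing the amplitudes $b_{p,q,r,\ell}$ \emph{outside} $\operatorname{supp}\chi_0$ recursively to match the prescribed $a_j$, and then appealing to Borel's lemma to produce smooth global amplitudes extending those already fixed on $\operatorname{supp}\chi_0$.

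The main obstacle will be this last matching and extension step: showing that one can consistently extend the amplitudes $b_{p,q,r,\ell}$ from $U_0$ to the illuminated region so that the resulting formal series reproduces \eqref{eq:ilasymp} on $U_-$, while preserving smoothness and the $S^0_{2/3,1/3}$-symbolic estimates established in Lemma \ref{lemma:apqrl-Hor}. This requires careful bookkeeping of the index correspondence, a Borel-type summation to assemble the new amplitudes, and verification that the gluing does not disturb the symbol class. Once this is in place, assembling the three pieces gives both the global membership $\eta^{\rm slow} \in S^0_{2/3,1/3}(B \times (0,\infty))$ and the ansatz \eqref{eq:MT85Neumannlocal} over the entire boundary $B$.
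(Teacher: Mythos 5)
Your plan is essentially the approach the paper intends: the paper itself does not spell out a proof but says it ``follows the same lines as in the proof of \cite[Corollary~5.3]{DominguezEtAl07}'' and ``is based on the standard matching of asymptotic expansions technique,'' which is exactly what you propose (partition of unity subordinate to shadow / transition / illuminated neighborhoods, local ansatz from Theorem~\ref{thm:asymp-exp-local} in the middle, Schwartz decay in the deep shadow, and a reduction of the ansatz to a classical $S^0_{1,0}$ expansion in the deep illuminated region where $k^{1/3}Z \to +\infty$, matched against the hypothesized expansion~\eqref{eq:ilasymp}). Your exponent bookkeeping is correct: combining the prefactor $k^{-(1+2p+3q+r+\ell)/3+(\ell+1)_-}$ with the power $k^{(1+\ell-2r-p-3j)/3}$ coming from the term-by-term expansion of $(\Psi^{r,\ell})^{(p)}(k^{1/3}Z)$ gives $k^{-(p+q+r+j)+(\ell+1)_-}$, which ranges over $-\mathbb{Z}_+$ as claimed. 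The one caveat is that you flag the recursive extension/Borel-summation step as the ``main obstacle'' without carrying it out, so, like the paper, you are deferring the hard part to the Dirichlet-case references; for a complete proof one would still need to verify explicitly that the amplitudes fixed by Theorem~\ref{thm:asymp-exp-local} on the transition annulus extend smoothly and consistently (across the overlaps with $U_\pm$) to $2P$-periodic functions on all of $B$ producing the claimed global $S^0_{2/3,1/3}$ bound, and that the redundancy in the index correspondence $(p,q,r,\ell,j)\mapsto -(p+q+r+j)+(\ell+1)_-$ (with the accompanying $Z$-powers $1+\ell-2r-p-3j$) can indeed be resolved compatibly with those boundary conditions, following \cite{DominguezEtAl07,EcevitReitich09}.
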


The proof of Theorem~\ref{thm:asymp-exp-global} follows the same lines as in the proof of \cite[Corollary 5.3]{DominguezEtAl07}
(see also \cite[Theorem 3.1 and Corollary 2.1]{EcevitReitich09}) and is based on the standard \emph{matching of asymptotic expansions
technique} (see e.g. \cite{DominguezEtAl07} and the references therein). The expansion \eqref{eq:ilasymp} related to Neumann problem is
similar to the one given in \cite[Equation 1.15]{MelroseTaylor85} for the Dirichlet case. Furthermore, using the references provided in
\cite{DominguezEtAl07} (see the proof of Corollary 5.3) we can deduce that, for the two-dimensional Neumann boundary value
problem, $\eta^{\rm slow}$ decays exponentially in $B^{\epsilon}_{>}$ as $k \to -\infty$ which implies the assumption
of its rapid decay in the sense of Schwarz in Theorem~\ref{thm:asymp-exp-global}.


\section{Auxiliary results} \label{sec:auxiliary}

Here we provide auxiliary results used in the proofs.

\begin{lemma} \emph{\cite[Lemma 14]{Ecevit18}}
\label{lemma:generalderivatives}
Let $a(s,k) = k^{\theta} \, b(s) \, \varphi(k^{\omega}Z(s))$ where $b,\varphi$ and $Z$ are smooth functions, $b$ and $Z$ are periodic,
and $\theta \in \mathbb{R} \backslash \mathbb{N}$ and $\omega \in \mathbb{R} \backslash \Zplus$. Then
\begin{align*}
	|D_s^{n}D_{k}^{m} \, a(s,k)|
	& \lesssim
	k^{\theta-m}
	\sum_{j=0}^{n+m}
	k^{j \omega} 
	|\varphi^{(j)} (k^{\omega}Z(s))|
\end{align*}
for all $n,m \in \Zplus$ and all $k >0$.
\end{lemma}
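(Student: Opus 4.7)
The plan is to prove the estimate by induction on the total derivative order $N = n+m$, tracking the algebraic structure rather than computing exact formulas. Specifically, I would show by induction that $D_s^n D_k^m a(s,k)$ is a finite sum of terms of the form
\[
	c_{\alpha} \, k^{\theta - m + j \omega} \, B_{\alpha}(s) \, \varphi^{(j)}(k^{\omega} Z(s))
\]
where $j$ ranges over $\{0, 1, \ldots, n+m\}$, the $c_\alpha$ are real constants, and each $B_\alpha$ is a finite product of $b$, $Z$ and their derivatives. Since $b$ and $Z$ are smooth and $2P$-periodic, each such $B_\alpha$ is bounded uniformly on $[0,2P]$, and the claimed estimate follows by taking absolute values and collecting like powers of $k^{j\omega}$.

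For the base case $n = m = 0$, we have $|a(s,k)| \leq \|b\|_\infty \, k^{\theta} \, |\varphi(k^\omega Z(s))|$, which is the asserted structural form with a single term $j = 0$. For the inductive step, it suffices to observe that a single differentiation in $s$ or in $k$ sends each structural term to a sum of at most two structural terms of the same shape but with $(n,m)$ replaced by $(n+1,m)$ or $(n,m+1)$. Explicitly, for a generic structural term $T = k^{\theta'} B(s) \varphi^{(j)}(k^\omega Z(s))$,
\begin{align*}
	D_s T & = k^{\theta'} B'(s) \varphi^{(j)}(k^\omega Z(s)) + k^{\theta' + \omega} B(s) Z'(s) \varphi^{(j+1)}(k^\omega Z(s)), \\
	D_k T & = \theta' k^{\theta' - 1} B(s) \varphi^{(j)}(k^\omega Z(s)) + \omega k^{\theta' - 1 + \omega} B(s) Z(s) \varphi^{(j+1)}(k^\omega Z(s)).
\end{align*}
In the first identity the $k$-exponent $\theta'$ is preserved (i.e.\ $-m$ is preserved) while $j$ increases by $0$ or $1$; in the second identity the $k$-exponent decreases by $1$ (accounting for the additional $k$-derivative) while $j$ again increases by $0$ or $1$. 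Thus the structural form is preserved with exactly the bookkeeping claimed: the factor $k^{\theta - m + j\omega}$ appears with $j \leq n+m$, matching the bound in the statement.

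Finally, the uniform bound on the coefficients $B_\alpha(s)$ follows from the fact that each $B_\alpha$ is a polynomial expression in $b, Z$ and finitely many of their $s$-derivatives, each of which is continuous and $2P$-periodic, hence bounded on $\R$. Summing the finitely many structural terms yields
\[
	|D_s^n D_k^m a(s,k)| \lesssim \sum_{j = 0}^{n+m} k^{\theta - m + j\omega} \, |\varphi^{(j)}(k^\omega Z(s))|,
\]
which is the required estimate. There is no substantive obstacle: the hypotheses $\theta \in \R\setminus \N$ and $\omega \in \R \setminus \Zplus$ are not used in the argument (they are inherited from the context in which the lemma is invoked), and the combinatorics of Fa\`a di Bruno are not needed since only a qualitative bookkeeping of the exponent of $k$ and the order of derivative of $\varphi$ is required.
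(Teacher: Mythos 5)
Your proof is correct. The paper itself does not prove this lemma but cites \cite[Lemma~14]{Ecevit18}, so there is no in-paper argument to compare against; your structural induction on the total derivative order $N=n+m$ is a clean and standard way to establish the estimate. The bookkeeping is sound: each $D_s$ preserves the prefactor $k^{\theta-m}$ while sending $j\mapsto j$ or $j\mapsto j+1$, each $D_k$ replaces $m$ by $m+1$ while sending $j\mapsto j$ or $j\mapsto j+1$, and the polynomial coefficients $B_\alpha$ in $b,Z$ and their $s$-derivatives are uniformly bounded by periodicity and smoothness. Your observation that the hypotheses $\theta\notin\mathbb{N}$ and $\omega\notin\Zplus$ play no role in the bound (they are inherited from the context in \cite{Ecevit18}), and that the full Fa\`a di Bruno combinatorics can be avoided since only the exponent of $k$ and the order of the derivative of $\varphi$ need tracking, are both accurate and worth noting.
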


\begin{theorem} \label{thm:pae} \emph{\cite[Corollary 3.12]{Schwab98}}
Given a function $f \in C^{\infty}([a,b])$ and $n \in \mathbb{Z}_+$, there exists a constant
$C_n > 0$ such that
\begin{equation*}
	\inf_{p \in \mathbb{P}_d}
	\Vert f-p \Vert_{L^2[(a,b )]}
	\le C_n \left[
		\int_{a}^{b}
		\left| D^{n} f(s) \right|^{2}
		\left( s-a \right)^{n}
		\left( b-s \right)^{n}
		ds
	\right]^{\frac{1}{2}} \, d^{-n}
\end{equation*}
for all $d \in \mathbb{N}$ with $d+1 \ge n$.
\end{theorem}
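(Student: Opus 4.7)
The plan is to prove this weighted Jackson-type inequality by first reducing to the reference interval $[-1,1]$ via the affine change of variable $s=\tfrac{a+b}{2}+\tfrac{b-a}{2}\,t$. Under this substitution the weight $(s-a)^n(b-s)^n$ becomes a fixed constant multiple of $(1-t^2)^n$, and the operator $D^n$ scales by $\bigl(\tfrac{b-a}{2}\bigr)^{-n}$. Combined with the Jacobian in the $L^2$-norm, all explicit powers of $(b-a)$ cancel, so it suffices to prove the estimate on $[-1,1]$ with a constant $C_n$ depending only on $n$.

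On the reference interval I would expand $f$ in the orthogonal Legendre basis $f=\sum_{k\ge 0}c_k L_k$, normalised so that $\|L_k\|^2_{L^2}=(k+\tfrac12)^{-1}$. The truncation $p_d=\sum_{k\le d}c_k L_k$ is the $L^2$-orthogonal projection of $f$ onto $\mathbb{P}_d$, so the squared best-approximation error is exactly $\sum_{k>d}|c_k|^2/(k+\tfrac12)$. The central algebraic step uses the Rodrigues formula for $L_k$, which after $n$ integrations by parts (the boundary terms vanish, since each integration by parts produces a factor of $(1-t^2)$) rewrites the coefficient in the form
\[
c_k \;=\; \alpha_{k,n}\int_{-1}^{1} D^n f(t)\,P^{(n,n)}_{k-n}(t)\,(1-t^2)^n\,dt, \qquad k\ge n,
\]
where $P^{(n,n)}_{k-n}$ is the degree-$(k-n)$ Jacobi polynomial associated with the weight $(1-t^2)^n$ and $\alpha_{k,n}$ is an explicit constant proportional, up to $n$-dependent polynomial factors in $k$, to $(k-n)!/(k+n)!$.

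Applying Cauchy--Schwarz in the weighted inner product with weight $(1-t^2)^n$ and inserting the Jacobi normalisation for $\int_{-1}^{1}(P^{(n,n)}_{k-n})^2(1-t^2)^n\,dt$ then yields a bound of the form $|c_k|^2/(k+\tfrac12)\le c_n\,[(k-n)!/(k+n)!]\,\|D^n f\|^2_{L^2_{(1-t^2)^n}}$. Summing over $k>d$ and using the telescoping estimate $\sum_{k>d}(k-n)!/(k+n)!\le c_n\,d^{-2n}$, which is valid precisely under the hypothesis $d+1\ge n$, gives the desired $d^{-2n}$ rate for the squared error. Taking square roots and undoing the affine change recovers the statement. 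The main technical obstacle is the careful bookkeeping of the $n$-dependent constants coming from the Rodrigues prefactor, the Jacobi normalisation, and the factorial tail sum; sharpness of the exponent $d^{-n}$ depends on integrating by parts exactly $n$ times so that the factorial ratio carries the exponent $2n$ and matches the required polynomial decay in $d$.
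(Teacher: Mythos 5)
The paper offers no proof of Theorem~\ref{thm:pae}; it is cited verbatim from Schwab's book. Your proposed route is the standard one (affine reduction, Legendre--Rodrigues, Jacobi polynomials), but there is a genuine gap in the last step that costs you a factor of $d^{1/2}$ and so does not recover the stated $d^{-n}$ rate.

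Concretely, your term-by-term Cauchy--Schwarz bound
\[
\frac{|c_k|^2}{k+\tfrac12}\;\le\;c_n\,\frac{(k-n)!}{(k+n)!}\,\Vert D^n f\Vert^2_{L^2_{(1-t^2)^n}}
\]
is correct, but the subsequent claim $\sum_{k>d}\tfrac{(k-n)!}{(k+n)!}\le c_n d^{-2n}$ is false. The telescoping identity gives
\[
\sum_{k>d}\frac{(k-n)!}{(k+n)!}=\frac{1}{2n-1}\,\frac{(d+1-n)!}{(d+n)!},
\]
and the denominator $(d+2-n)(d+3-n)\cdots(d+n)$ contains only $2n-1$ factors, so the tail behaves like $d^{-(2n-1)}$, not $d^{-2n}$. (For $n=1$ this is just $\sum_{k>d}\tfrac{1}{k(k+1)}=\tfrac{1}{d+1}$, which is $d^{-1}$, not $d^{-2}$.) Summing the Cauchy--Schwarz bound therefore only yields $\Vert f-p_d\Vert \lesssim d^{-n+1/2}\Vert D^n f\Vert_w$.

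The fix is to use Parseval rather than Cauchy--Schwarz. Since $D^n L_k = \gamma_{k,n}P^{(n,n)}_{k-n}$ with $\gamma_{k,n}=\tfrac{(k+n)!}{2^n k!}$, the Jacobi expansion of $D^n f$ has coefficients exactly $\gamma_{k,n}c_k$, and orthogonality gives the identity
\[
\Vert D^n f\Vert^2_{L^2_{(1-t^2)^n}}
=\sum_{k\ge n}|c_k|^2\,\gamma_{k,n}^2\,\Vert P^{(n,n)}_{k-n}\Vert^2_w
=\sum_{k\ge n}\frac{2\,|c_k|^2}{2k+1}\cdot\frac{(k+n)!}{(k-n)!}.
\]
Comparing this term by term with $\Vert f-p_d\Vert^2=\sum_{k>d}\tfrac{2|c_k|^2}{2k+1}$, you pull out the \emph{maximum} of the ratios $\tfrac{(k-n)!}{(k+n)!}$ over $k>d$, which is attained at $k=d+1$ and equals $\tfrac{(d+1-n)!}{(d+1+n)!}$. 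This has $2n$ factors in the denominator and hence is $\asymp d^{-2n}$, giving the sharp $d^{-n}$ rate (with a constant $C_n$ controlling the ratio uniformly over $d\ge n-1$). In short: replace ``Cauchy--Schwarz then sum'' by ``Parseval then take a sup of the ratio'' --- the error in your version is precisely that it spreads a sum where a max is required.
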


\begin{lemma} \label{lemma:exactintegral} \emph{\cite[Lemma 14]{Ecevit18}}
Suppose that either $[\alpha,\beta] \subseteq [t_{1},t_{2}] \subseteq (c,d)$ or
$[\alpha,\beta] \cap (t_{1},t_{2}) = \varnothing$ and $[c,d] \subseteq (t_1,t_2)$.
Then, for any $a,b \in \mathbb{R}$, $n \in \mathbb{N} \cup \{ 0 \}$, $m \in \mathbb{N}$,
there holds
\begin{align*}
	\int_{\alpha}^{\beta} \dfrac{(s-a)^{n} \, (b-s)^{n}}{(s-c)^{m} \, (d-s)^{m}} \, ds
	= \sum_{0 \le p,q \le n \atop 1 \le j \le m}
	\binom{2m-j-1}{m-j} \binom{n}{p} \binom{n}{q}
	\dfrac{(-1)^{n} \, \mathcal{F}(\alpha,\beta;a,b;c,d;n,p,q;j)}{(d-c)^{2m-j}}.
\end{align*}
Here we have
\[
	\mathcal{F}(\alpha,\beta;a,b;c,d;n,p,q;j)
	= \left( c-a \right)^{p} \left( c-b \right)^{q} \log \left( \dfrac{\beta-c}{\alpha-c} \right)
	+ \left( a-d \right)^{p} \left( b-d \right)^{q} \log \left( \dfrac{d-\alpha}{d-\beta} \right)
\]
when $2n-(p+q+j) = -1$, and
\begin{align*}
	& \mathcal{F}(\alpha,\beta;a,b;c,d;n,p,q;j)
	\\
	& \hspace{2cm}
	= \dfrac{\left( c-a \right)^{p} \left( c-b \right)^{q}}{2n-(p+q+j)+1}
	\left[ \left( \beta-c \right)^{2n-(p+q+j)+1} - \left( \alpha-c \right)^{2n-(p+q+j)+1} \right]
	\\
	& \hspace{2cm}
	+ \dfrac{\left( a-d \right)^{p} \left( b-d \right)^{q}}{2n-(p+q+j)+1}
	\left[ \left( d-\alpha \right)^{2n-(p+q+j)+1} - \left( d-\beta \right)^{2n-(p+q+j)+1} \right]
\end{align*}
when $2n-(p+q+j) \ne -1$.
\end{lemma}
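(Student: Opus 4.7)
The plan is to reduce the integral to elementary integrals of monomials in $(s-c)$ and $(d-s)$ via a partial fraction decomposition of $1/[(s-c)^m(d-s)^m]$ combined with binomial expansions of $(s-a)^n(b-s)^n$ about the two singular points $c$ and $d$ separately. The hypothesis on $[\alpha,\beta]$, $[t_{1},t_{2}]$, and $[c,d]$ serves to guarantee that $[\alpha,\beta]$ avoids both $c$ and $d$ and that $s-c$ and $d-s$ retain constant signs on $[\alpha,\beta]$, so that the logarithmic arguments $(\beta-c)/(\alpha-c)$ and $(d-\alpha)/(d-\beta)$ are positive and all intermediate integrals are legitimate.

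The cornerstone is the partial fraction identity
\begin{equation*}
	\frac{1}{(s-c)^{m}(d-s)^{m}}
	= \sum_{j=1}^{m} \frac{\binom{2m-j-1}{m-j}}{(d-c)^{2m-j}}
	\left[\frac{1}{(s-c)^{j}} + \frac{1}{(d-s)^{j}}\right],
\end{equation*}
which I would verify by induction on $m$, starting from the base case $\frac{1}{(s-c)(d-s)} = \frac{1}{d-c}\bigl[\frac{1}{s-c}+\frac{1}{d-s}\bigr]$. Equivalently, clearing denominators via $u = s-c$ and $v = d-s$, the identity becomes the polynomial relation $(u+v)^{2m-1} = \sum_{j=1}^{m} \binom{2m-j-1}{m-j}(u+v)^{j-1}u^{m-j}v^{m-j}(u^{j}+v^{j})$, which can be checked directly by expanding the right-hand side and comparing with the binomial expansion of $(u+v)^{2m-1}$.

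Next I would expand the numerator in two complementary ways using the binomial theorem. Writing $s-a = (s-c)+(c-a)$ and $b-s = -(s-c)-(c-b)$ yields
\begin{equation*}
	(s-a)^{n}(b-s)^{n} = (-1)^{n}\sum_{p,q=0}^{n}\binom{n}{p}\binom{n}{q}(c-a)^{p}(c-b)^{q}(s-c)^{2n-p-q},
\end{equation*}
while the decompositions $s-a = (d-a)-(d-s)$ and $b-s = (b-d)+(d-s)$ give
\begin{equation*}
	(s-a)^{n}(b-s)^{n} = (-1)^{n}\sum_{p,q=0}^{n}\binom{n}{p}\binom{n}{q}(a-d)^{p}(b-d)^{q}(d-s)^{2n-p-q}.
\end{equation*}
Pairing the first expansion with the $1/(s-c)^{j}$ contributions and the second with the $1/(d-s)^{j}$ contributions of the partial fraction decomposition, each $(p,q,j)$-summand reduces to a constant multiple of $\int_{\alpha}^{\beta}(s-c)^{2n-(p+q+j)}\,ds$ or $\int_{\alpha}^{\beta}(d-s)^{2n-(p+q+j)}\,ds$. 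These elementary integrals evaluate to logarithms precisely when $2n-(p+q+j) = -1$ and to power-differences otherwise, reproducing exactly the two cases in the definition of $\mathcal{F}$.

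The main obstacle is establishing the partial fraction identity with the precise combinatorial coefficient $\binom{2m-j-1}{m-j}$; once it is in hand, the remainder of the argument is a careful bookkeeping of signs, of the reindexings $p\mapsto n-p$ concealed in the binomial expansions, and of the separation between the logarithmic and power-difference cases. Assembling all contributions with the partial fraction weights $\binom{2m-j-1}{m-j}/(d-c)^{2m-j}$ then yields the closed-form expression claimed in the lemma.
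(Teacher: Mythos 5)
Your proof is correct. Note that the paper states this lemma without proof, citing \cite[Lemma 14]{Ecevit18}, so there is no in-paper argument to compare against; your reconstruction via the partial-fraction decomposition
\begin{equation*}
	\frac{1}{(s-c)^{m}(d-s)^{m}}
	= \sum_{j=1}^{m} \frac{\binom{2m-j-1}{m-j}}{(d-c)^{2m-j}}
	\left[\frac{1}{(s-c)^{j}} + \frac{1}{(d-s)^{j}}\right]
\end{equation*}
combined with the two binomial expansions of the numerator about $c$ and about $d$, and then integrating the resulting monomials $(s-c)^{2n-p-q-j}$ and $(d-s)^{2n-p-q-j}$, is the natural and presumably the same approach as the source. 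One small remark on your verification of the partial-fraction identity: rather than induction on $m$ or the polynomial identity $(u+v)^{2m-1}=\sum_{j=1}^{m}\binom{2m-j-1}{m-j}(u+v)^{j-1}u^{m-j}v^{m-j}(u^{j}+v^{j})$, the quickest route is to read off $A_{j}$, the coefficient of $1/(s-c)^{j}$, as the coefficient of $(s-c)^{m-j}$ in the Taylor expansion of $1/(d-s)^{m}=(d-c)^{-m}\bigl(1-\tfrac{s-c}{d-c}\bigr)^{-m}=\sum_{i\ge0}\binom{m+i-1}{i}\tfrac{(s-c)^{i}}{(d-c)^{m+i}}$, which gives $A_{j}=\binom{2m-j-1}{m-j}/(d-c)^{2m-j}$ directly, with the symmetric $1/(d-s)^{j}$ coefficient following from the involution $s\mapsto c+d-s$. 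You are also right that the hypothesis on $[\alpha,\beta]$, $[t_{1},t_{2}]$, $[c,d]$ is precisely what guarantees $s-c$ and $d-s$ keep constant sign on $[\alpha,\beta]$, so the ratios inside the logarithms are positive and no absolute values are needed.
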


\section*{Acknowledgements}

A. Anand gratefully acknowledges the support by Science \& Engineering Research Board through File No MTR/2017/000643.
Y. Boubendir's work was supported by the NSF through Grants DMS-1720014 and DMS-2011843.
F. Ecevit is supported by the Scientific and Technological Research Council of Turkey through grant T\"{U}B\.{I}TAK-1001-117F056.

\bibliographystyle{abbrv}
\bibliography{AnandBoubendirEvevitLazergui2020.bib}

\begin{thebibliography}{10}

\bibitem{AminiProfit03}
S.~Amini and A.~Profit.
\newblock Multi-level fast multipole solution of the scattering problem.
\newblock {\em Eng. Anal. Bound. Elem.}, 27(5):547--564, May 2003.

\bibitem{AnandEtAl10}
A.~Anand, Y.~Boubendir, F.~Ecevit, and F.~Reitich.
\newblock Analysis of multiple scattering iterations for high-frequency
  scattering problems. {II}. {T}he three-dimensional scalar case.
\newblock {\em Numer. Math.}, 114(3):373--427, 2010.

\bibitem{BanjaiHackbusch05}
L.~Banjai and W.~Hackbusch.
\newblock Hierarchical matrix techniques for low- and high-frequency
  {H}elmholtz problems.
\newblock {\em IMA J. Numer. Anal.}, 28(1):46--79, 2008.

\bibitem{Boffi10}
D.~Boffi.
\newblock Finite element approximation of eigenvalue problems.
\newblock {\em Acta Numer.}, 19:1--120, 2010.

\bibitem{BoubendirTurc13}
Y.~Boubendir and C.~Turc.
\newblock Wave-number estimates for regularized combined field boundary
  integral operators in acoustic scattering problems with {N}eumann boundary
  conditions.
\newblock {\em IMA J. Numer. Anal.}, 33(4):1176--1225, 2013.

\bibitem{BrunoEtAl05}
O.~Bruno, C.~Geuzaine, and F.~Reitich.
\newblock On the {$O(1)$} solution of multiple-scattering problems.
\newblock {\em IEEE Trans. Magn.}, 41(5):1488--1491, May 2005.

\bibitem{BrunoEtAl13}
O.~P. Bruno, V.~Dom{\'{\i}}nguez, and F.-J. Sayas.
\newblock Convergence analysis of a high-order {N}ystr\"om integral-equation
  method for surface scattering problems.
\newblock {\em Numer. Math.}, 124(4):603--645, 2013.

\bibitem{BrunoGeuzaine07}
O.~P. Bruno and C.~A. Geuzaine.
\newblock An {$O(1)$} integration scheme for three-dimensional surface
  scattering problems.
\newblock {\em J. Comput. Appl. Math.}, 204(2):463--476, 2007.

\bibitem{BrunoEtAl04}
O.~P. Bruno, C.~A. Geuzaine, J.~A. Monro, Jr., and F.~Reitich.
\newblock Prescribed error tolerances within fixed computational times for
  scattering problems of arbitrarily high frequency: the convex case.
\newblock {\em Philos. Trans. R. Soc. Lond. Ser. A Math. Phys. Eng. Sci.},
  362(1816):629--645, 2004.

\bibitem{BrunoKunyansky01}
O.~P. Bruno and L.~A. Kunyansky.
\newblock A fast, high-order algorithm for the solution of surface scattering
  problems: basic implementation, tests, and applications.
\newblock {\em J. Comput. Phys.}, 169(1):80--110, 2001.

\bibitem{Cea64}
J.~C{\'e}a.
\newblock Approximation variationnelle des probl\`emes aux limites.
\newblock {\em Ann. Inst. Fourier (Grenoble)}, 14(fasc. 2):345--444, 1964.

\bibitem{Chandler-WildeGraham09}
S.~N. Chandler-Wilde and I.~G. Graham.
\newblock Boundary integral methods in high frequency scattering.
\newblock In {\em Highly oscillatory problems}, volume 366 of {\em London Math.
  Soc. Lecture Note Ser.}, pages 154--193. Cambridge Univ. Press, Cambridge,
  2009.

\bibitem{Chandler-WildeEtAl12}
S.~N. Chandler-Wilde, I.~G. Graham, S.~Langdon, and E.~A. Spence.
\newblock Numerical-asymptotic boundary integral methods in high-frequency
  acoustic scattering.
\newblock {\em Acta Numer.}, 21:89--305, 2012.

\bibitem{Chandler-WildeEtAl15}
S.~N. Chandler-Wilde, D.~P. Hewett, S.~Langdon, and A.~Twigger.
\newblock A high frequency boundary element method for scattering by a class of
  nonconvex obstacles.
\newblock {\em Numer. Math.}, 129(4):647--689, 2015.

\bibitem{ChandlerWildeLangdon06}
S.~N. Chandler-Wilde and S.~Langdon.
\newblock A {G}alerkin boundary element method for high frequency scattering by
  convex polygons.
\newblock {\em SIAM J. Numer. Anal.}, 45(2):610--640, 2007.

\bibitem{Chandler-WildeEtAl07}
S.~N. Chandler-Wilde, S.~Langdon, and M.~Mokgolele.
\newblock A high frequency boundary element method for scattering by convex
  polygons with impedance boundary conditions.
\newblock {\em Commun. Comput. Phys.}, 11(2):573--593, 2012.

\bibitem{GibbsEtAl19}
S.~N. Chandler-Wilde, E.~A. Spence, A.~Gibbs, and V.~P. Smyshlyaev.
\newblock High-frequency bounds for the {H}elmholtz equation under parabolic
  trapping and applications in numerical analysis.
\newblock {\em SIAM J. Math. Anal.}, 52(1):845--893, 2020.

\bibitem{ColtonKress92}
D.~Colton and R.~Kress.
\newblock {\em Inverse acoustic and electromagnetic scattering theory},
  volume~93 of {\em {A}pplied {M}athematical {S}ciences}.
\newblock Springer-Verlag, Berlin, 1992.

\bibitem{ColtonKress83}
D.~L. Colton and R.~Kress.
\newblock {\em Integral equation methods in scattering theory}.
\newblock Pure and Applied Mathematics (New York). John Wiley \& Sons, Inc.,
  New York, 1983.

\bibitem{DaviesEtAl09}
R.~W. Davies, K.~Morgan, and O.~Hassan.
\newblock A high order hybrid finite element method applied to the solution of
  electromagnetic wave scattering problems in the time domain.
\newblock {\em Comput. Mech.}, 44(3):321--331, 2009.

\bibitem{DominguezEtAl07}
V.~Dom{\'{\i}}nguez, I.~G. Graham, and V.~P. Smyshlyaev.
\newblock A hybrid numerical-asymptotic boundary integral method for
  high-frequency acoustic scattering.
\newblock {\em Numer. Math.}, 106(3):471--510, 2007.

\bibitem{Ecevit18}
F.~Ecevit.
\newblock {Frequency independent solvability of surface scattering problems}.
\newblock {\em Turkish J. Math.}, 42(2):407--422, 2018.

\bibitem{EcevitEruslu19}
F.~Ecevit and H.~H. Eruslu.
\newblock {A Galerkin BEM for high-frequency scattering problems based on
  frequency-dependent changes of variables}.
\newblock {\em IMA J. Numer. Anal.}, 39(2):893--923, 02 2019.

\bibitem{EcevitOzen17}
F.~Ecevit and H.~{\c{C}}. {\"O}zen.
\newblock Frequency-adapted galerkin boundary element methods for convex
  scattering problems.
\newblock {\em Numer. Math.}, 135(1):27--71, 2017.

\bibitem{EcevitReitich09}
F.~Ecevit and F.~Reitich.
\newblock Analysis of multiple scattering iterations for high-frequency
  scattering problems. {I}. {T}he two-dimensional case.
\newblock {\em Numer. Math.}, 114(2):271--354, 2009.

\bibitem{Fedoryuk71}
M.~V. Fedoryuk.
\newblock The stationary phase method and pseudodifferential operators.
\newblock {\em Russ. Math. Surv.}, 26(1):65, 1971.

\bibitem{Galkowski19}
J.~Galkowski.
\newblock Distribution of resonances in scattering by thin barriers.
\newblock {\em Mem. Amer. Math. Soc.}, 259(1248):ix+152, 2019.

\bibitem{GalkowskiEtAl19}
J.~Galkowski, E.~H. M\"{u}ller, and E.~A. Spence.
\newblock Wavenumber-explicit analysis for the {H}elmholtz {$h$}-{BEM}: error
  estimates and iteration counts for the {D}irichlet problem.
\newblock {\em Numer. Math.}, 142(2):329--357, 2019.

\bibitem{GiladiKeller04}
E.~Giladi and J.~B. Keller.
\newblock An asymptotically derived boundary element method for the helmholtz
  equations.
\newblock {\em Appl.Comput. Electromagn. Soc.}, 2004.

\bibitem{GrothEtAl18}
S.~Groth, D.~Hewett, and S.~Langdon.
\newblock A hybrid numerical–asymptotic boundary element method for high
  frequency scattering by penetrable convex polygons.
\newblock {\em Wave Motion}, 78:32--53, 2018.

\bibitem{GrothEtAl13}
S.~P. Groth, D.~P. Hewett, and S.~Langdon.
\newblock {Hybrid numerical–asymptotic approximation for high-frequency
  scattering by penetrable convex polygons}.
\newblock {\em IMA J. Appl. Math.}, 80(2):324--353, 10 2013.

\bibitem{HesthavenWarburton04}
J.~Hesthaven and T.~Warburton.
\newblock High-order accurate methods for time-domain electromagnetics.
\newblock {\em CMES Comput. Model. Eng. Sci.}, 5(5):395--407, MAY 2004.

\bibitem{Hewett15}
D.~P. Hewett.
\newblock Shadow boundary effects in hybrid numerical-asymptotic methods for
  high-frequency scattering.
\newblock {\em European J. Appl. Math.}, 26(5):773--793, 2015.

\bibitem{HewettEtAl15}
D.~P. Hewett, S.~Langdon, and S.~N. Chandler-Wilde.
\newblock A frequency-independent boundary element method for scattering by
  two-dimensional screens and apertures.
\newblock {\em IMA J. Numer. Anal.}, 35(4):1698--1728, 2015.

\bibitem{HewettEtAl13}
D.~P. Hewett, S.~Langdon, and J.~M. Melenk.
\newblock A high frequency \$hp\$ boundary element method for scattering by
  convex polygons.
\newblock {\em SIAM J. Numer. Anal.}, 51(1):629--653, 2013.

\bibitem{HuybrechsVandewalle07}
D.~Huybrechs and S.~Vandewalle.
\newblock A sparse discretization for integral equation formulations of high
  frequency scattering problems.
\newblock {\em SIAM J. Sci. Comput.}, 29(6):2305--2328, 2007.

\bibitem{Johnson02}
W.~P. Johnson.
\newblock The curious history of {F}a\`a di {B}runo's formula.
\newblock {\em Amer. Math. Monthly}, 109(3):217--234, 2002.

\bibitem{LangdonEtAl10}
S.~Langdon, M.~Mokgolele, and S.~Chandler-Wilde.
\newblock High frequency scattering by convex curvilinear polygons.
\newblock {\em J. Comput. Appl. Math.}, 234(6):2020--2026, 2010.

\bibitem{LazerguiBoubendir17}
S.~Lazergui and Y.~Boubendir.
\newblock Asymptotic expansions of the {H}elmholtz equation solutions using
  approximations of the {D}irichlet to {N}eumann operator.
\newblock {\em J. Math. Anal. Appl.}, 456(2):767--786, 2017.

\bibitem{Martinez02}
A.~Martinez.
\newblock {\em An introduction to semiclassical and microlocal analysis}.
\newblock Springer-Verlag, New York, 2002.

\bibitem{Melrose75}
R.~B. Melrose.
\newblock Local {F}ourier-{A}iry integral operators.
\newblock {\em Duke Math. J.}, 42(4):583--604, 1975.

\bibitem{Melrose78}
R.~B. Melrose.
\newblock Airy operators.
\newblock {\em Commun. Partial. Differ. Equ.}, 3(1):1--76, 1978.

\bibitem{MelroseTaylor85}
R.~B. Melrose and M.~E. Taylor.
\newblock Near peak scattering and the corrected {K}irchhoff approximation for
  a convex obstacle.
\newblock {\em Adv. in Math.}, 55(3):242--315, 1985.

\bibitem{NamburuEtAl04}
R.~Namburu, E.~Mark, and J.~Clarke.
\newblock Scalable electromagnetic simulation environment.
\newblock {\em CMES Comput. Model. Eng. Sci.}, 5(5):443--453, 2004.

\bibitem{Schwab98}
C.~Schwab.
\newblock {\em {$p$}- and {$hp$}-finite element methods}.
\newblock Numerical Mathematics and Scientific Computation. The Clarendon
  Press, Oxford University Press, New York, 1998.

\bibitem{ShererVisbal05}
S.~E. Sherer and M.~R. Visbal.
\newblock Time-domain scattering simulations using a high-order overset-grid
  approach.
\newblock In {\em Workshop on Computational Electromagnetics in Time-Domain,
  2005. CEM-TD 2005.}, pages 44--47, September 2005.

\bibitem{TafloveHagness05}
A.~Taflove and S.~C. Hagness.
\newblock {\em Computational electrodynamics: {T}he finite-difference
  time-domain method}.
\newblock Artech House, Inc., Boston, MA, {S}econd edition, 2000.

\bibitem{TongChew10}
M.~S. Tong and W.~C. Chew.
\newblock Multilevel fast multipole acceleration in the {N}ystr\"om
  discretization of surface electromagnetic integral equations for composite
  objects.
\newblock {\em IEEE Trans. Antennas and Propagation}, 58(10):3411--3416, 2010.

\end{thebibliography}

\end{document}